\newcommand{\conv}{\operatorname{conv}}
\newcommand{\Vol}{\operatorname{Vol}}
\newcommand{\diam}{\operatorname{diam}}
\newcommand{\aff}{\operatorname{aff}}
\newcommand{\V}{{\rm V}}
\newcommand{\id}{\operatorname{Id}}
\newcommand{\vol}{\operatorname{vol}}
\newcommand{\Ric}{\operatorname{Ric}}
\newcommand{\scal}{\operatorname{scal}}
\newcommand{\Z}{\mathds Z}
\newcommand{\R}{\mathds R}
\newcommand{\C}{\mathds C}
\newcommand{\SO}{\mathsf{SO}}
\renewcommand{\O}{\mathsf O}
\newcommand{\U}{\mathsf{U}}
\newcommand{\Sym}{\operatorname{Sym}}
\newcommand{\g}{\mathrm g}
\newcommand{\tr}{\operatorname{tr}}
\newcommand{\Gr}{\operatorname{Gr}_2}
\newcommand{\diag}{\operatorname{diag}}
\newtheorem{theorem}{Theorem}[]
\newtheorem{lemma}[theorem]{Lemma}
\newtheorem{proposition}[theorem]{Proposition}
\newtheorem{corollary}[theorem]{Corollary}
\newtheorem{mainthm}{\sc Theorem}
\newtheorem{maincor}[mainthm]{\sc Corollary}
\theoremstyle{definition}
\newtheorem{definition}[theorem]{Definition}
\theoremstyle{remark}
\newtheorem{remark}[theorem]{Remark}
\newtheorem{example}[theorem]{Example}
\title{Geography of pinched four-manifolds}
\author[R. G. Bettiol]{Renato G. Bettiol}
\address{\!\!\!\begin{tabular}{lll}
CUNY Lehman College & & CUNY Graduate Center \\
Department of Mathematics & & Department of Mathematics \\
250 Bedford Park~Blvd W & & 365 Fifth Avenue \\
Bronx, NY, 10468, USA & & New York, NY, 10016, USA
\end{tabular}
}
\email{r.bettiol@lehman.cuny.edu}
\author[M. Kummer]{Mario Kummer}
\address{Technische Universit\"at Dresden \newline
\indent Fakult\"at Mathematik \newline
\indent Institut f\"ur Geometrie \newline
\indent Zellescher Weg 12-14 \newline
\indent 01062 Dresden, Germany}
\email{mario.kummer@tu-dresden.de}
\author[R. A. E. Mendes]{Ricardo A. E. Mendes}
\address{University of Oklahoma\newline
\indent Department of Mathematics\newline
\indent 601 Elm Ave\newline
\indent Norman, OK, 73019-3103, USA}
\email{ricardo.mendes@ou.edu}
\newcommand{\dpinched}{\Omega_\delta}
\newcommand{\epinched}{\mathrm{E}_\delta}
\newcommand{\esimp}{\Delta_\delta}
\newcommand{\pr}{\mathrm{pr}}
\numberwithin{equation}{section}
\numberwithin{table}{section}
\numberwithin{theorem}{section}
\subjclass{53C20, 53C21, 53C25, 57K40, 57R20, 52B55, 90C20}
\date{July 24, 2022}
\begin{document}
\begin{abstract}
We prove several new restrictions on the Euler characteristic and signature of oriented $4$-manifolds with (positively or negatively) pinched sectional curvature. In particular, we show that simply connected $4$-manifolds with $\delta\leq \sec\leq 1$, where $\delta=\frac{1}{1+3\sqrt3}\cong 0.161$, are homeomorphic to $S^4$ or $\C P^2$.
\end{abstract}

\maketitle

\section{Introduction}
The so-called \emph{Geography Problem} in $4$-manifold topology is to determine which pairs $(\sigma,\chi)\in\Z^2$ can be realized as signature $\sigma=\sigma(M)$ and Euler characteristic $\chi=\chi(M)$ of a certain class of $4$-manifolds $M$; e.g., complex surfaces~\cite{persson}, irreducible $4$-manifolds~\cite{fin-stern}, or those with a given fundamental group~\cite{kirk-geography}. In this paper, we systematically investigate a geometric version of this problem, where the condition imposed on $M$ is the existence of a Riemannian metric with pinched curvature. We say that a Riemannian manifold $(M,\g)$ is \emph{positively} or \emph{negatively $\delta$-pinched}, $\delta\in (0,1]$, if the sectional curvature of every tangent 2-plane $\Pi$ satisfies
\begin{equation*}
 \delta\leq\sec(\Pi)\leq1, \quad\text{ or }\quad  -1\leq \sec(\Pi) \leq -\delta,
\end{equation*}
respectively. Collectively, $(M,\g)$ is called \emph{$\delta$-pinched} if it is either positively or negatively $\delta$-pinched. 
It is well-known that $\delta$-pinched $4$-manifolds have $\chi(M)>0$; our first main result provides an explicit upper bound for the ratio $|\sigma(M)|/\chi(M)$:

\begin{mainthm}\label{mainthm:Lambda}
If $(M^4,\g)$ is a $\delta$-pinched oriented $4$-manifold with finite volume, then
\begin{equation}\label{eq:mainthm}
|\sigma(M)|\leq \lambda(\delta)\,\chi(M),    
\end{equation}
where $\lambda\colon (0,1]\to\R$ is an \emph{explicit} continuous function, given in \eqref{eq:bestlambda}, that is strictly decreasing and satisfies $\lim\limits_{\delta\searrow0}\lambda(\delta)=+\infty$, $\lambda\!\left(\tfrac{1}{1+3\sqrt3}\right)<\tfrac12$, $\lambda\!\left(\tfrac14\right)=\tfrac13$, and $\lambda(1)=0$.
\end{mainthm}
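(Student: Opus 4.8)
The plan is to derive the inequality \eqref{eq:mainthm} from the Gauss--Bonnet and signature formulas, which for an oriented closed (or finite-volume) $4$-manifold express $\chi(M)$ and $\sigma(M)$ as integrals over $M$ of pointwise quantities built from the curvature operator. Writing the curvature operator $\mathcal{R}\colon \Lambda^2 T_pM\to\Lambda^2 T_pM$ in the standard block form adapted to the splitting $\Lambda^2 = \Lambda^+\oplus\Lambda^-$, both integrands become $O(4)$-invariant polynomials in the eigenvalue data of $\mathcal{R}$ (equivalently, in the Weyl curvature components $W^\pm$, the traceless Ricci, and the scalar curvature). So the first step is to reduce \eqref{eq:mainthm} to a \emph{pointwise} inequality: find the smallest constant $\lambda$ such that $|\text{(signature integrand)}| \leq \lambda\cdot(\text{Gauss--Bonnet integrand})$ holds at every point, given only that all sectional curvatures of $\mathcal{R}$ lie in $[\delta,1]$ (or, by the symmetry $\mathcal{R}\mapsto -\mathcal{R}$ together with orientation reversal, in $[-1,-\delta]$, which yields the same bound). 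Integrating the pointwise inequality over $M$ then gives the theorem.

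Next I would set up the pointwise problem as a constrained optimization over the space of algebraic curvature operators on $\R^4$. The sectional curvature constraint $\delta\le \sec\le 1$ is a family of linear inequalities on $\mathcal{R}$ (one for each $2$-plane, i.e., for each decomposable bivector), so the feasible set is a spectrahedron-like convex body; call it $\dpinched$ in the notation the paper has set up. We want $\sup_{\mathcal{R}\in\dpinched} |\sigma\text{-integrand}(\mathcal{R})| / \chi\text{-integrand}(\mathcal{R})$. Because Chern--Gauss--Bonnet gives $\chi$-integrand $= \frac{1}{8\pi^2}\big(|W^+|^2+|W^-|^2 - \tfrac12|\mathring{\mathrm{Ric}}|^2 + \tfrac{1}{24}\scal^2\big)$ and the signature gives $\sigma$-integrand $= \frac{1}{12\pi^2}\big(|W^+|^2-|W^-|^2\big)$, the ratio only involves the Weyl part and the ``non-Weyl'' part of the $\chi$-integrand; one shows first that $\delta$-pinching forces the full curvature operator to be, up to scaling, close to a rank-one perturbation of the identity, which bounds $|W^\pm|$ in terms of $\scal$ and makes the non-Weyl part of the $\chi$-integrand positive and controllable. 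Diagonalizing $W^\pm$ and exploiting $O(4)$-invariance reduces everything to an explicit finite-dimensional optimization in the eigenvalues; the extremizer should be attained at a curvature operator of maximal symmetry (the boundary configuration realizing equality, plausibly modeled on $\C P^2$ with the Fubini--Study metric for the value $\lambda=1/3$ at $\delta=1/4$, which is the consistency check the theorem already advertises). The explicit formula \eqref{eq:bestlambda} for $\lambda(\delta)$ is then read off from this extremizer, and the claimed properties — continuity, strict monotonicity, $\lambda(1)=0$ (rigidity of the round metric, where $W=0$ and $\mathring{\mathrm{Ric}}=0$), $\lambda(1/4)=1/3$, $\lim_{\delta\searrow 0}\lambda=+\infty$, and $\lambda\big(\tfrac{1}{1+3\sqrt3}\big)<\tfrac12$ — follow by elementary calculus on the closed-form expression.

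The main obstacle is the pointwise optimization over $\dpinched$: the constraint set is cut out by infinitely many linear inequalities (one per Grassmannian point $\Pi\in\Gr(\R^4)$), and the objective, while polynomial, is a ratio of indefinite quadratics in the curvature components, so it is not obvious a priori where the supremum is attained or even that it is finite. The key technical work — which I expect the paper to carry out via convexity/semidefinite-programming duality (hence the \texttt{52B55, 90C20} subject classifications) — is to identify the extremal curvature operators and to verify that the pinching constraint genuinely bounds the Weyl norm; this is exactly the place where the interplay between the linear sectional-curvature inequalities and the quadratic Gauss--Bonnet/signature forms has to be understood precisely, and where the specific algebraic value $\delta = \tfrac{1}{1+3\sqrt 3}$ must emerge as the threshold below which $\lambda(\delta)<\tfrac12$ (and hence, by the known topological consequence that $|\sigma|<\tfrac12\chi$ for simply connected $M$ forces $M$ to be homeomorphic to $S^4$ or $\C P^2$, the abstract's conclusion). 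A subsidiary but real issue is the finite-volume (rather than compact) hypothesis: one must ensure the curvature integrals converge and that Chern--Gauss--Bonnet and the signature formula still apply — presumably via an appeal to finiteness results for pinched manifolds (e.g., that finite volume plus pinching forces finite topological type), which I would invoke rather than reprove.
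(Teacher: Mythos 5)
Your high-level strategy — reduce to a pointwise inequality $\underline{\sigma}(R)\leq\lambda\,\underline{\chi}(R)$ on $\delta$-pinched algebraic curvature operators and then integrate — is the right one and is indeed how the paper begins. But the proposal stops short precisely at the step you flag as ``the key technical work,'' and the ingredient you do offer in its place is not viable. The assertion that $\delta$-pinching forces $R$ to be ``close to a rank-one perturbation of the identity'' is not true in any useful quantitative sense, and it does not by itself control the traceless-Ricci block $C$ in \eqref{eq:curvop4}, which enters $\underline{\chi}$ with the sign-adverse term $-\tfrac14|C|^2$. This is exactly why the optimization over $\dpinched$ is nontrivial: the objective $I_\lambda=\underline{\chi}-\tfrac1\lambda\underline{\sigma}$ genuinely depends on $|C|^2$, and without a bound on $|C|^2$ you cannot even show the pointwise ratio is finite. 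The paper's resolution is the Finsler--Thorpe Trick (\Cref{lem:finslerthorpe}), which characterizes $\dpinched$ as a spectrahedral shadow and, via \Cref{prop:boundC}, yields the sharp bound $|C|^2\leq 3(u-\delta)^2-3t_1^2+\langle\vec w_+,\vec w_-\rangle$. Substituting this turns the problem into quadratic optimization over the (finite-dimensional, $\SO(4)$-quotiented) augmented Einstein simplex $\esimp^6$ of \Cref{prop:einsteinonethorpe}, which is then carried out face-by-face using \Cref{cor:smallsig} and Cylindrical Algebraic Decomposition. None of this machinery appears in your proposal, and it is not a packaging detail: it is the mechanism that bounds the indefinite part of the integrand.

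Two further points in the proposal are factually off. First, your single optimization does not give $\lambda(\tfrac14)=\tfrac13$: the extremizer for the paper's new method (\Cref{thm:estimates-general}) yields $\lambda^*(\tfrac14)>\tfrac13$, and the value $\tfrac13$ at $\delta=\tfrac14$ only enters through a separate, extended version of Ville's argument (\Cref{thm:ville}); the theorem's $\lambda$ is $\min\{\lambda^*,\lambda^\V\}$, a fact your plan cannot reproduce. Second, the parenthetical ``$|\sigma|<\tfrac12\chi$ for simply connected $M$ forces $M\cong S^4$ or $\C P^2$'' is false — $S^2\times S^2$ satisfies $|\sigma|<\tfrac12\chi$ — and the paper's \Cref{mainthm:classification} additionally relies on the definiteness of $Q_M$ from \cite{DR19} (which is also where $\tfrac{1}{1+3\sqrt3}$ comes from; it is \emph{not} the threshold at which $\lambda$ crosses $\tfrac12$, that crossing for $\lambda^*$ is at $\tfrac{-199+9\sqrt{545}}{71}\approx 0.156$). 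Your treatment of the finite-volume case (invoke finiteness/bounded-geometry results rather than reprove) matches the paper's \Cref{subsec:integrals}.
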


In the above statement, and throughout this paper, all manifolds are assumed complete and without boundary. If $(M^4,\g)$ is negatively $\delta$-pinched, then $M$ need not be closed, in which case $\sigma(M)$ is to be understood as the proper homotopy invariant given by the $L^2$-signature $\sigma_{(2)}(M^4,\g)$, see Section \ref{subsec:integrals} and~\cite{cheeger-gromov1,cheeger-gromov2}.

The fact that $\lambda$ is an \emph{explicit} function of $\delta$ is the crucial component of Theorem~\ref{mainthm:Lambda}, as the existence of \emph{some} function $\lambda\colon(0,1]\to\R$ satisfying \eqref{eq:mainthm} and $\lambda(1)=0$ can be shown with routine arguments.
Theorem~\ref{mainthm:Lambda} is a concoction of our main technical result (Theorem~\ref{thm:estimates-general})
and a thorough extension (Theorem~\ref{thm:ville}) of the seminal works of Ville~\cite{ville-negative,ville-positive}, carried out in Appendix~\ref{app:ville}. While Theorem~\ref{thm:estimates-general} gives a continuously differentiable function $\lambda^*\colon(0,1]\to\R$ satisfying \eqref{eq:mainthm} and all other conditions in Theorem~\ref{mainthm:Lambda} except $\tfrac14\mapsto \tfrac13$, Theorem~\ref{thm:ville} yields a continuous function $\lambda^\V\colon[\delta_0^\V,1]\to\R$, where $\delta_0^\V\cong 0.163$, satisfying \eqref{eq:mainthm} and $\tfrac14\mapsto \tfrac13$. Combining these, we have the function $\lambda(\delta)=\min\{\lambda^*(\delta),\lambda^\V(\delta)\}$ in Theorem~\ref{mainthm:Lambda}; see Section~\ref{subsec:explicit} for details. 
Theorem~\ref{mainthm:Lambda} also improves a similar result of Polombo~\cite{polombo} for $0<\delta\leq\frac{1}{4}$.

Since $\delta$-pinching becomes less stringent as $\delta\searrow0$, it is natural that a function $\lambda\colon(0,1]\to\R$ satisfying \eqref{eq:mainthm} be decreasing and $\lambda(0_+)=+\infty$. 
On the other hand, $\lambda(1)=0$ indicates a certain sharpness regarding \eqref{eq:mainthm}, since $1$-pinched manifolds are either spherical or hyperbolic spaceforms, which, being locally conformally flat, have vanishing signature. Also $\lambda\!\left(\tfrac14\right)=\tfrac13$ is sharp, as the complex projective plane $M=\C P^2$ and compact quotients $M=\C H^2/\Gamma$ of the complex hyperbolic plane are all $\frac{1}{4}$-pinched and have $\sigma(M)=\tfrac{1}{3}\chi(M)$.

Let us further analyze the positively and negatively pinched cases separately.

\subsection{Positively pinched 4-manifolds}\label{subsec:pos}
By the celebrated $1/4$-pinched Sphere Theorem and its several improvements~\cite{brendle-schoen,petersen-tao}, it is known that there exists $\varepsilon>0$ such that if an oriented $4$-manifold $(M^4,\g)$ is $\delta$-pinched with $\delta\geq \tfrac14-\varepsilon$, then $M$ is diffeomorphic to $S^4$ or $\C P^2$. Moreover, every positively $\delta$-pinched oriented $4$-manifold $M$ is closed and has $b_1(M)=0$, hence $|\sigma(M)| < \chi(M)$, see \eqref{eq:basic_geography}.

Thus, in this case, Theorem~\ref{mainthm:Lambda} only provides new information in the range 
\begin{equation}\label{eq:pos-range}
\lambda^{-1}(1)<\delta<\tfrac14-\varepsilon,
\end{equation}
where $\lambda^{-1}(1)=\frac{39-5\sqrt{57}}{24}\cong 0.052$, according to \eqref{eq:bestlambda}. A particularly interesting value in the above range is $\tfrac{1}{1+3\sqrt3}\cong 0.161$, since positively $\tfrac{1}{1+3\sqrt3}$-pinched oriented $4$-manifolds were recently shown by Di\'ogenes and Ribeiro~\cite[Thm.~1]{DR19} to have definite intersection form. Combined with Theorem~\ref{mainthm:Lambda}, that gives $|\sigma(M)|<\frac{1}{2}\chi(M)$, and the classical works of Donaldson~\cite{Donaldson83} and Freedman~\cite{Freedman82}, this yields: 

\begin{mainthm}\label{mainthm:classification}
If $(M^4,\g)$ is a positively $\delta$-pinched simply-connected $4$-mani\-fold, with
$\delta\geq \frac{1}{1+3\sqrt{3}}\cong0.161$, then $M$ is homeomorphic to $S^4$ or $\C P^2$.
\end{mainthm}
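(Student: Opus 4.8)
To prove \Cref{mainthm:classification}, the plan is to chain together the Di\'ogenes--Ribeiro definiteness theorem, the estimate of \Cref{mainthm:Lambda}, and the Donaldson--Freedman classification of simply-connected $4$-manifolds, extracting from them the bound $b_2(M)\in\{0,1\}$, which leaves only the two asserted homeomorphism types. Since the conclusion concerns only the unoriented homeomorphism type, I am free to fix any orientation on $M$ throughout.

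First I would reduce to a closed smooth $4$-manifold with a definite intersection form. Since $\sec\geq\delta\geq\frac{1}{1+3\sqrt{3}}>0$, Bonnet--Myers forces $M$ to be compact, so $M$ is a closed, simply-connected, smooth $4$-manifold. As $M$ is in particular positively $\tfrac{1}{1+3\sqrt{3}}$-pinched, \cite[Thm.~1]{DR19} shows that its intersection form $Q_M$ is definite; orienting $M$ so that $Q_M$ is positive-definite, we get $b_2^+(M)=b_2(M)$ and $b_2^-(M)=0$, hence $|\sigma(M)|=b_2(M)$, while $b_0=b_4=1$ and $b_1=b_3=0$ give $\chi(M)=2+b_2(M)$.

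Next I would feed these identities into \eqref{eq:mainthm}. Since $\lambda$ is strictly decreasing with $\delta\geq\tfrac{1}{1+3\sqrt{3}}$ and $\lambda\!\bigl(\tfrac{1}{1+3\sqrt{3}}\bigr)<\tfrac12$, one gets
\[
b_2(M)=|\sigma(M)|\leq\lambda(\delta)\,\chi(M)\leq\lambda\!\Bigl(\tfrac{1}{1+3\sqrt{3}}\Bigr)\bigl(2+b_2(M)\bigr)<\tfrac12\bigl(2+b_2(M)\bigr),
\]
forcing $b_2(M)<2$, i.e.\ $b_2(M)\in\{0,1\}$. If $b_2(M)=0$, then $M$ is a simply-connected integral homology $4$-sphere, hence homotopy equivalent to $S^4$, and Freedman's theorem \cite{Freedman82} (its Kirby--Siebenmann invariant vanishing, $M$ being smooth) yields $M\approx S^4$. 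If $b_2(M)=1$, then $Q_M$ is unimodular of rank one and positive-definite, so $Q_M=\langle1\rangle$ — here Donaldson's diagonalization theorem \cite{Donaldson83} is the relevant input, even if at rank one it is automatic — and Freedman's theorem yields $M\approx\C P^2$.

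This deduction is deliberately short: essentially all the real work is outsourced to \Cref{mainthm:Lambda} (together with \cite{DR19}), so the only thing to get right here is the bookkeeping — namely that the single inequality $\lambda\!\bigl(\tfrac{1}{1+3\sqrt{3}}\bigr)<\tfrac12$ is exactly what is needed to force $b_2(M)\leq1$ once definiteness has identified $|\sigma(M)|$ with $b_2(M)$, plus the standard caveat that ``homeomorphic to $\C P^2$'' is read in the unoriented sense so that reversing orientation in the second step causes no trouble. The main obstacle therefore lies entirely upstream, in establishing \Cref{mainthm:Lambda} with an \emph{explicit} $\lambda$ that is small enough on the relevant range; note that the threshold $\tfrac{1}{1+3\sqrt{3}}$ in \Cref{mainthm:classification} is dictated by \cite{DR19}, since \Cref{mainthm:Lambda} already delivers $\lambda(\delta)<\tfrac12$ on a considerably larger range of $\delta$.
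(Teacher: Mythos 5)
Your proof is correct and follows essentially the same strategy as the paper: Di\'ogenes--Ribeiro gives definiteness of $Q_M$, the pinching inequality from \Cref{mainthm:Lambda} (using $\lambda(\delta)<\tfrac12$ on the relevant range) forces $b_2(M)\leq 1$, and Donaldson--Freedman pins down the homeomorphism type. The one organizational difference is that the paper's formal proof in \Cref{sec:lower} invokes the self-contained \Cref{thm:estimates-baby} (which directly asserts $\chi(M)-2|\sigma(M)|>0$ for $\delta>\frac{-199+9\sqrt{545}}{71}\cong 0.156$) rather than the packaged function $\lambda(\delta)$ of \Cref{mainthm:Lambda}; the two are equivalent for this purpose, and the introduction of the paper in fact presents the argument exactly as you do. Your handling of the final step is slightly more hands-on than the paper's (which cites \Cref{thm:4Dmanifolds}), but your observation that Donaldson's diagonalization is automatic at rank one and Freedman's theorem then applies because $M$ is smooth is correct and adequate.
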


Theorem~\ref{mainthm:classification} improves on results of Ville~\cite{ville-positive}, Seaman~\cite{seaman}, and Ko~\cite{ko2}, where the same conclusion is obtained under stricter $\delta$-pinching:
$\delta\geq\frac{4}{19}\cong0.211$, $\delta\geq 0.188$, and $\delta\geq 0.171$, respectively. Although it is widely expected that closed simply-connected $4$-manifolds $(M^4,\g)$ with $\sec>0$ be diffeomorphic to $S^4$ or $\C P^2$ (see e.g.~\cite{ziller}), this remains a difficult open problem. Perhaps the most compelling evidence for this conjecture is that it holds if $(M^4,\g)$ has an isometric circle action~\cite{hk,grove-wilking}. From this point of view, Theorem~\ref{mainthm:classification} provides new evidence \emph{without any symmetry assumptions}, cf.~also~\cite[Thm.~C]{bm-iumj}.

Our next main result gives upper bounds on the region in the $(|\sigma|,\chi)$-plane reachable by positively $\delta$-pinched $4$-manifolds, refining an observation of Berger~\cite{bergerpinch} that such $4$-manifolds are either homology spheres or have $\chi(M)\leq\frac{1}{\delta^2}+\frac{8}{27}\left(\frac{1}{\delta}-1\right)^2$.

\begin{mainthm}\label{mainthm:bounds-pos}
If $(M^4,\g)$ is a positively $\delta$-pinched oriented $4$-manifold, 
then either $M$ is diffeomorphic to $S^4$, or 
$\chi(M)\leq \frac{8}{9}(\frac{1}{\delta}-1)^2$ and $|\sigma(M)|\leq \frac{8}{27}(\frac{1}{\delta}-1)^2$.
\end{mainthm}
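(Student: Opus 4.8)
First I would observe that, since $\g$ is positively $\delta$-pinched, $\sec_\g>0$, so $M$ is compact (Bonnet--Myers) and, being oriented and $4$-dimensional, simply connected (Synge's theorem). After a constant rescaling of $\g$ — which can only increase $\delta$ — I may assume $\max_M\sec_\g=1$, so that $\delta\le\sec_\g\le1$ and $\Ric_\g\ge3\delta$. Simple connectedness gives $\chi(M)=2+b_2(M)$, $\sigma(M)=b_2^+(M)-b_2^-(M)$, and $|\sigma(M)|\le b_2(M)=\chi(M)-2$.

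Next I would dispose of the case $b_2(M)=0$, where $M$ is a homotopy $4$-sphere: if in addition $\delta>\tfrac14$, then $\sec_\g>\tfrac14$ and the differentiable sphere theorem~\cite{brendle-schoen} forces $M\cong S^4$, so we are done; and if $\delta\le\tfrac14$, then $\tfrac1\delta-1\ge3$, whence $\tfrac89(\tfrac1\delta-1)^2\ge8>2=\chi(M)$ and $\tfrac8{27}(\tfrac1\delta-1)^2>0=|\sigma(M)|$, so we are again done. Thus I may assume $b_2(M)>0$ — in particular $M$ is not diffeomorphic to $S^4$ — and must prove the two displayed inequalities.

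The plan for this is to combine the Chern--Gauss--Bonnet and Hirzebruch signature formulas
\begin{gather*}
\chi(M)=\frac{1}{8\pi^2}\int_M\Big(|W^+|^2+|W^-|^2-\tfrac12|\mathring{\Ric}|^2+\tfrac{\scal^2}{24}\Big)\dd V_\g,\\
\sigma(M)=\frac{1}{12\pi^2}\int_M\big(|W^+|^2-|W^-|^2\big)\dd V_\g,
\end{gather*}
with pointwise curvature bounds. Discarding the nonpositive Ricci term and using $\big||W^+|^2-|W^-|^2\big|\le|W^+|^2+|W^-|^2$, the task reduces to estimating the densities $|W^+|^2+|W^-|^2+\tfrac{\scal^2}{24}$ and $|W^+|^2+|W^-|^2$ from above in terms of $\delta$, together with bounding $\Vol(M)$. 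For the densities, writing the curvature operator $\mathcal R_\g$ of $\g$ at a point in block form relative to $\Lambda^2=\Lambda^+\oplus\Lambda^-$, positive $\delta$-pinching is precisely the condition that $\langle\mathcal R_\g\xi,\xi\rangle\in[\delta,1]$ for every decomposable unit bivector $\xi$; maximizing the two densities subject to this compact semialgebraic constraint is a constrained-optimization problem whose value is an explicit function of $\delta$, furnished by the paper's main technical estimates~(\Cref{thm:estimates-general}). For the volume, $\Ric_\g\ge3\delta$ and Bishop--Gromov give $\Vol(M)\le\Vol(S^4_{1/\sqrt\delta})=\tfrac{8\pi^2}{3\delta^2}$.

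The hard part, I expect, is putting these together sharply: simply multiplying the pointwise supremum of a density by the Bishop bound only recovers the weaker estimate of Berger recalled above. The reason is that the $\delta$-pinched curvature operators which make the Gauss--Bonnet or signature density large (those with large Weyl and scalar contributions, as for $\C P^2$) lie far from the constant-curvature locus, so that at such points $\Ric_\g$ is bounded below by strictly more than $3\delta$, sharpening the volume comparison; one must therefore optimize the density jointly with the volume bound as a function of $\delta$, exploiting in addition the harmonic $2$-forms provided by $b_2(M)>0$ (via a Bochner--Weitzenb\"ock formula). Carrying out this coupled optimization — once more through the constrained optimization over $\delta$-pinched curvature operators underlying \Cref{thm:estimates-general} — the Euler-characteristic density should yield $\chi(M)\le\tfrac89(\tfrac1\delta-1)^2$ and the signature density $|\sigma(M)|\le\tfrac8{27}(\tfrac1\delta-1)^2$, while the reductions in the first two paragraphs are routine.
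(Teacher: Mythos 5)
Your reductions (Bonnet--Myers, Synge, rescaling, disposing of the $b_2(M)=0$ case) are fine, and you correctly sense that the sharp pointwise Weyl bound $|W_+|^2+\eta\,|W_-|^2\le\frac{8}{3}(1-\delta)^2$ for $\delta$-pinched operators is what is needed (this is \Cref{prop:weylbound}, not \Cref{thm:estimates-general}). But the core of your argument has a genuine gap, and your diagnosis of it, while correct, is not followed by the right fix. If you discard only the nonpositive $\mathring{\Ric}$ term in Chern--Gauss--Bonnet, you are left with the $\frac{\scal^2}{24}$ term, whose supremum on $\dpinched$ is attained at the round-sphere operator and dominates the Weyl term; multiplying by a volume bound can then only produce $\chi(M)\lesssim 1/\delta^2$, which is worse than the asserted $\frac{8}{9}\big(\frac1\delta-1\big)^2$. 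What the paper actually uses for $\chi$ is the conformal argument of Chang--Gursky--Yang (\Cref{lemma:CGY}): for a closed oriented $4$-manifold with $\sec>0$ and not diffeomorphic to $S^4$, one has $\chi(M)\le\frac{1}{4\pi^2}\int_M(|W_+|^2+|W_-|^2)\,\vol_\g$ --- no scalar-curvature term at all, and with a favorable constant. This Yamabe-functional input cannot be replaced by the Bochner--Weitzenb\"ock/``coupled optimization'' heuristic you propose, which is left entirely unexecuted.

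Second, your volume bound is a factor of $2$ too weak. Bishop--Gromov from a Ricci lower bound alone gives $\Vol(M)\le\Vol\!\big(S^4(1/\sqrt\delta)\big)=\frac{8\pi^2}{3\delta^2}$, but the theorem's constants require $\Vol(M)\le\frac{4\pi^2}{3\delta^2}$ (\Cref{lemma:vol}). That comes from the Grove--Shiohama Diameter Sphere Theorem: since $M$ is not homeomorphic to $S^4$, $\diam(M)\le\frac{\pi}{2\sqrt\delta}$, and Bishop applied to the ball of that radius gives at most the volume of a hemisphere. Without both the CGY inequality and the Grove--Shiohama diameter bound, the estimates you would obtain are strictly weaker than the ones claimed; with them, the proof is a clean three-line multiplication (pointwise Weyl bound $\times$ hemisphere volume, once for $\eta=1$, once for $\eta=-1$), and no joint optimization of density against volume is needed.
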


To the best of our knowledge, no restrictions on $\sigma(M)$, aside those inherited from $|\sigma(M)|\leq\chi(M)-2$, were previously known for positively $\delta$-pinched manifolds.

Even though the upper bounds in Theorem~\ref{mainthm:bounds-pos} diverge to $+\infty$ as $\delta\searrow0$, $\chi(M)$ and $|\sigma(M)|$ are known to be bounded above by a universal constant $C(4)$ for any closed $4$-manifold $(M^4,\g)$ with $\sec>0$. This is a consequence of the celebrated total Betti number bound of Gromov~\cite{gromov-total}, see Remark~\ref{rem:gromov-abresch} for details. 
Gromov conjectured that $C(4)=2^4$, but the best available estimates (due to Abresch~\cite{abreschII}) only give $C(4)\lesssim 2.731\times 10^{232}$. The latter is a smaller upper bound on $\chi(M)$ than that in Theorem~\ref{mainthm:bounds-pos} if $\delta\lesssim 5.705\times 10^{-117}$, but it is \emph{hundreds} of orders of magnitude larger if, e.g., $\delta\gtrsim0.086$, in which case Theorem~\ref{mainthm:bounds-pos} gives $\chi(M)\leq 10^2$. Still, according to the conjectured classification of closed simply-connected $4$-manifolds with $\sec>0$, one ought to have $\chi(M)\leq 3$ and $|\sigma(M)|\leq 1$, no matter how small is $\delta>0$.

\begin{figure}
\begin{tikzpicture}[scale=0.75]
    \fill[opacity=0.35,green] (0,0.5) -- (0,9) -- (3,9) -- (3,5.43) -- (0.617,1.12);

    \draw[->, line width=1] (0,0) -- (10,0);
    \draw[->, line width=1] (0,0) -- (0,10);
    \draw[-, line width=1] (0,0.5) -- (9.25,9.75); 
    \draw[-, line width=1] (3,9.75) -- (3,0);
    \draw[-, line width=1] (0,9) -- (9.75,9);
    \draw[-, line width=1] (0,0) -- (5.386,9.75); 

    \node (chi) at (0,10.5) {$\chi$};
    \node (sigma) at (10.5,0) {$|\sigma|$};

    \foreach \x in {0,0.5,...,9.5} {
        \foreach \y in {0,0.5,...,9.5} {
            \fill[color=black] (\x,\y) circle (0.04);
        }
    }
    \foreach \x in {0.25,0.75,...,9.5} {
        \foreach \y in {0.25,0.75,...,9.5} {
            \fill[color=black] (\x,\y) circle (0.04);
        }
    }
        \node (m) at (-0.25,0.5) {$2$};
        \node (m) at (9.25,10.1) {$\chi\geq|\sigma|+2$};
     \node (M) at (-1.25,9) {$\frac{8}{9}\left(\frac{1}{\delta}-1\right)^2$};
    \node (M) at (3,-.5) {$\frac{8}{27}\left(\frac{1}{\delta}-1\right)^2$};
    \node (m) at (5.25,10.1) {$\frac{|\sigma(M)|}{\chi(M)}\leq \lambda(\delta)$};
\end{tikzpicture}
\caption{Admissible range (green) for $\big(|\sigma(M)|,\chi(M)\big)$ if $M$ is positively $\delta$-pinched by \eqref{eq:basic_geography} and Theorems~\ref{mainthm:Lambda} and \ref{mainthm:bounds-pos}; drawn with $\delta=\frac{9\sqrt2-2}{79}\cong 0.136$, so $\lambda(\delta)\cong 0.552$, $\chi(M)\leq 36$, and $\sigma(M)\leq 12$.}\label{fig:geography}
\end{figure}
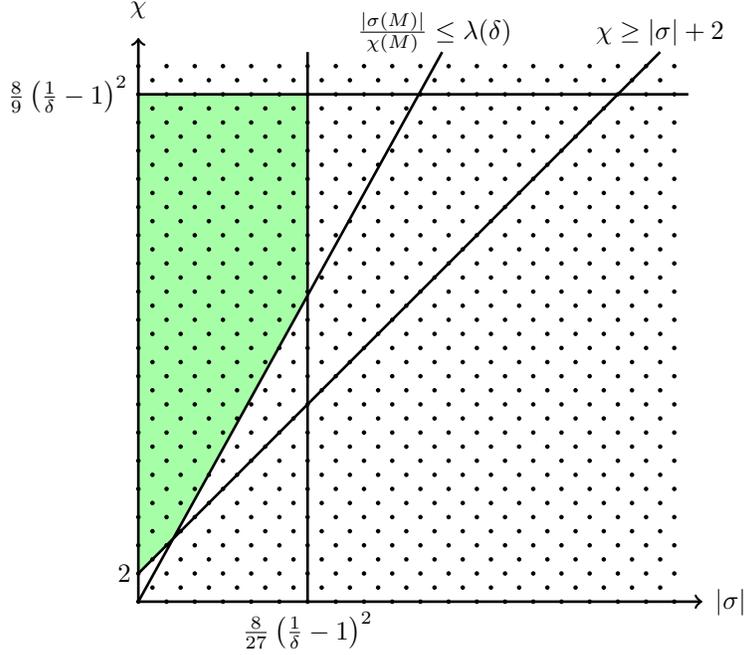

The admissible region in the $(|\sigma|,\chi)$-plane for positively $\delta$-pinched $4$-manifolds, as constrained by Theorems~\ref{mainthm:Lambda} and \ref{mainthm:bounds-pos}, is illustrated in Figure~\ref{fig:geography}.
By Synge's Theorem, such a $4$-manifold $M$ has $\pi_1(M)\cong\{1\}$ if it is oriented, and $\pi_1(M)\cong\Z_2$ otherwise. Thus, resorting again to the works of Donaldson~\cite{Donaldson83} and Freedman~\cite{Freedman82} for the simply-connected case, and to Hambleton, Kreck, and Teichner~\cite{hkt} for the non-simply-connected case, one derives the following from Theorems~\ref{mainthm:Lambda} and \ref{mainthm:bounds-pos}.

\begin{maincor}\label{cor:homeotypes}
For all $\delta>0$, there exists an \emph{explicit} finite list of possible homeomorphism types for positively $\delta$-pinched $4$-manifolds.
\end{maincor}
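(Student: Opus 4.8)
The plan is to assemble three ingredients: Synge's Theorem together with Bonnet--Myers, the explicit bounds of \Cref{mainthm:Lambda,mainthm:bounds-pos}, and the homeomorphism classifications of closed $4$-manifolds with trivial or order-$2$ fundamental group. Since $\sec\geq\delta>0$, Bonnet--Myers shows that $M$ is closed with finite fundamental group, and Synge's Theorem gives $\pi_1(M)\cong\{1\}$ when $M$ is oriented and $\pi_1(M)\cong\Z_2$ otherwise. In the non-orientable case, the orientation double cover $\widehat M\to M$, with the pulled-back metric, is a \emph{simply connected} positively $\delta$-pinched $4$-manifold with $\chi(\widehat M)=2\,\chi(M)$. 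Thus it suffices to treat two cases: $M$ oriented and simply connected, and --- via the double cover --- $M$ non-orientable with $\pi_1\cong\Z_2$.

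In the oriented, simply connected case, by \Cref{mainthm:bounds-pos} either $M$ is diffeomorphic to $S^4$, or $\chi(M)\leq N(\delta)$, where $N(\delta):=\tfrac89\bigl(\tfrac1\delta-1\bigr)^2$; in the latter case \Cref{mainthm:Lambda} also gives $|\sigma(M)|\leq\lambda(\delta)\,N(\delta)$. Being smooth, $M$ has vanishing Kirby--Siebenmann invariant, so by Freedman's Theorem~\cite{Freedman82} it is determined up to homeomorphism by its intersection form $Q_M$, a unimodular symmetric $\Z$-bilinear form of rank $b_2(M)=\chi(M)-2\leq N(\delta)-2$. There are only finitely many isomorphism classes of unimodular symmetric $\Z$-forms of rank below a given bound, and they can be enumerated effectively: an indefinite one is determined by its rank, signature, and type (the diagonal form $a\langle 1\rangle\oplus b\langle -1\rangle$ in the odd case, a sum of copies of $\pm E_8$ and hyperbolic planes in the even case), and a definite one realized by a $4$-manifold of positive curvature is diagonalizable by Donaldson's Theorem~\cite{Donaldson83}. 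Hence there is an explicit finite list of homeomorphism types in this case, containing $S^4$, $\pm\C P^2$, and finitely many others.

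In the non-orientable case, applying the previous step to $\widehat M$, either $\widehat M$ is homeomorphic to $S^4$ --- whence $M$ is homeomorphic to $\R P^4$ --- or $\chi(M)=\tfrac12\chi(\widehat M)\leq\tfrac12 N(\delta)$. By the classification of closed $4$-manifolds with fundamental group $\Z_2$ of Hambleton--Kreck--Teichner~\cite{hkt}, such a manifold is determined up to homeomorphism by its Euler characteristic, its $w_2$-type, its Kirby--Siebenmann invariant, and the isometry class of its $\Z[\Z_2]$-equivariant intersection form; once $\chi(M)$ is bounded, these data take only finitely many, effectively computable values. This again yields an explicit finite list, and the union of the lists from the two cases is the desired one.

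Since each step invokes results stated earlier or in the literature, there is no genuine obstacle here; the only point requiring care is \emph{effectivity}. One must check that every ingredient is truly computable --- the bounds of \Cref{mainthm:Lambda,mainthm:bounds-pos} are explicit by construction, the enumeration of unimodular $\Z$-forms of bounded rank is elementary reduction theory, and the Hambleton--Kreck--Teichner invariants are computable --- so that the resulting list of homeomorphism types is genuinely explicit and not merely finite.
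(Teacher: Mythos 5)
Your proof is correct and reaches the same conclusion, but it routes the simply connected case somewhat differently than the paper. The paper invokes its packaged Theorem~\ref{thm:4Dmanifolds} (Donaldson--Freedman--Lichnerowicz), which uses the fact that a positively pinched metric has $\scal>0$ to kill the $\widehat A$-genus in the spin case and therefore pins the homeomorphism type down to $\#^r\C P^2\#^s\overline{\C P^2}$ or $\#^r(S^2\times S^2)$ with the bounds on $r,s$ coming straight from Theorem~\ref{mainthm:bounds-pos}; this yields an especially small, geometrically meaningful list. You instead apply Freedman's classification directly (smoothness gives vanishing Kirby--Siebenmann) and argue for finiteness of the list via the finiteness of isomorphism classes of unimodular symmetric $\Z$-bilinear forms of rank at most $N(\delta)-2$, invoking Donaldson only to diagonalize the definite ones. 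Your list is therefore a priori larger --- it includes even indefinite forms with $E_8$ summands that the paper's Lichnerowicz step excludes --- but it is still finite, still effectively enumerable, and still contains all the types that actually arise, which is all the corollary requires. The non-orientable case is handled the same way in both arguments: Synge gives $\pi_1\cong\Z_2$, Theorem~\ref{mainthm:bounds-pos} applied to the orientation double cover bounds $\chi(M)$, and Hambleton--Kreck--Teichner reduces to finitely many computable invariants. One small stylistic note: Donaldson's diagonalization theorem needs only smoothness, not positive curvature, so the clause ``realized by a $4$-manifold of positive curvature'' is superfluous; and while the general enumeration of unimodular lattices of bounded rank is indeed algorithmic, the paper sidesteps this entirely, so if explicitness is the stated selling point it is worth noting that the published route is more directly explicit.
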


While Corollary~\ref{cor:homeotypes} also follows from the above mentioned bound on $\chi(M)$ for each $\delta>0$ by Berger~\cite{bergerpinch}, together with $|\sigma(M)|+2\leq \chi(M)$, the explicit list of homeomorphism types obtained using 
Theorems~\ref{mainthm:Lambda} and \ref{mainthm:bounds-pos} is substantially shorter.

\subsection{Negatively pinched 4-manifolds}\label{subsec:neg}
In light of constructions of Gromov and Thurston~\cite{gromov-thurston}, classification results similar to the $1/4$-pinched Sphere Theorem are impossible in the negatively pinched case. Indeed, for all $\varepsilon>0$, there are closed negatively $(1-\varepsilon)$-pinched $4$-manifolds that do not admit metrics of constant curvature. While the examples in~\cite{gromov-thurston} have zero signature, similar examples with \emph{nonzero signature} were recently found by Ontaneda~\cite[Cor.~4]{ontaneda}. Thus, in stark contrast to the positively pinched case~\eqref{eq:pos-range}, the conclusion of Theorem~\ref{mainthm:Lambda} in the negatively pinched case is nontrivial in the full range $0<\delta<1$.
As $\delta\nearrow1$, it follows from Theorem~\ref{mainthm:Lambda} that negatively $\delta$-pinched oriented $4$-manifolds $(M^4,\g)$ with $\sigma(M)\neq0$
must have $\chi(M)\nearrow+\infty$; namely, by~\eqref{eq:bestlambda}, we have that
\begin{equation*}
\chi(M)\geq \dfrac{24 \delta^2-12 \delta+15}{8(1-\delta)^2}
\end{equation*}
if $\delta$ is sufficiently close to $1$.
In particular, solving for $\delta$, one sees that closed oriented $4$-manifolds $M$ with $\sigma(M)\neq0$ and fixed Euler characteristic $\chi(M)=\chi$ can only admit metrics that are negatively $\delta$-pinched if $\delta\leq\delta_\chi<1$, where $\delta_\chi$ is explicit.
A similar gap $\delta\leq \delta_\pi$ holds~\cite[Cor.~1.3]{belegradek} weakening $\sigma(M)\neq0$ to $M$ not admitting hyperbolic metrics, and fixing $\pi_1(M)\cong\pi$ instead of $\chi(M)$, but $\delta_\pi$ is not explicit.

We also remark that although the lower bound \eqref{eq:mainthm} becomes arbitrarily weak as $\delta\searrow0$, it follows from Gromov~\cite[\S 1.7 (2)]{gromov-neg} that
there exists $C>0$ such that, for all $\delta>0$, negatively $\delta$-pinched closed $4$-manifolds with $\sigma(M)\neq0$ have $\chi(M)\geq C$. Combined with \eqref{eq:mainthm}, one has the lower bound
\begin{equation*}
\chi(M)\geq \max\left\{\frac{|\sigma(M)|}{\lambda(\delta)},C\right\},
\end{equation*}
which remains uniformly away from zero for all $\delta$, and diverges to $+\infty$ as $\delta\nearrow1$.

Another striking difference arises from negatively $\delta$-pinched closed $4$-manifolds $M$ not necessarily having $b_1(M)=0$. However, as these manifolds have $\chi(M)>0$,
\begin{equation}\label{eq:trivialb1}
b_1(M) \leq 1+ \tfrac12 b_+(M)+\tfrac12 b_-(M). 
\end{equation}
A consequence of Theorem~\ref{mainthm:Lambda} is that this upper bound can be improved to
\begin{equation}\label{eq:b1bound}
b_1(M)\leq 1+  \tfrac{\lambda-1}{2\lambda}\max\!\big\{b_{+}(M),b_-(M)\big\} +\tfrac{\lambda+1}{2\lambda}\min\!\big\{b_+(M),b_-(M)\big\},
\end{equation}
where $\lambda=\lambda(\delta)>0$. Note that the above weighted average of $b_\pm(M)$ is strictly smaller than the simple average in \eqref{eq:trivialb1} whenever $\sigma(M)\neq0$.

Similarly to Theorem~\ref{mainthm:bounds-pos}, upper bounds (depending on volume) can be given on the admissible region in the $(|\sigma|,\chi)$-plane for negatively $\delta$-pinched $4$-manifolds:

\begin{mainthm}\label{mainthm:bounds-neg}
If $(M^4,\g)$ is a negatively $\delta$-pinched oriented $4$-manifold with finite volume, then
\begin{equation*}
\chi(M)\leq \frac{3}{4\pi^2} \Vol(M,\g), \; \text{ and } \; |\sigma(M)|\leq \frac{2}{9\pi^2}(1-\delta)^2 \Vol(M,\g).
\end{equation*}
Equality in the upper bound for $\chi(M)$ is achieved if and only if $(M^4,\g)$ is hyperbolic.
\end{mainthm}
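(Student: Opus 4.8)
The plan is to feed the Chern--Gauss--Bonnet and Hirzebruch signature formulas with the kind of \emph{pointwise} curvature estimates produced by \Cref{thm:estimates-general}. Recall that for a closed oriented Riemannian $4$-manifold $(M^4,\g)$ one has
\begin{equation*}
8\pi^2\,\chi(M)=\int_M\Big(\tfrac{1}{24}\,\scal^2-\tfrac12\,|\Ric_0|^2+|W|^2\Big)\,\dd V
\qquad\text{and}\qquad
12\pi^2\,\sigma(M)=\int_M\big(|W_+|^2-|W_-|^2\big)\,\dd V,
\end{equation*}
where $\Ric_0$ denotes the trace-free Ricci tensor and $W=W_++W_-$ the Weyl tensor, with self-dual and anti-self-dual parts $W_\pm$. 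If $(M^4,\g)$ is negatively $\delta$-pinched but not closed, finite volume forces it to have finitely many ends, each a cusp; the Chern--Gauss--Bonnet formula above then still holds (the transgression term on a truncating level set of a Busemann function decays to $0$, as $|R|$ is bounded and the cross-sections collapse), while $\sigma(M)$ must be read as the $L^2$-signature $\sigma_{(2)}(M,\g)$, for which the Cheeger--Gromov $L^2$-index theorem~\cite{cheeger-gromov1,cheeger-gromov2} yields the same Hirzebruch integral up to a correction term supported near the cusps; as explained in \Cref{subsec:integrals}, this correction does not affect the estimate.

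Given these formulas, \Cref{mainthm:bounds-neg} is equivalent to two pointwise inequalities for an abstract curvature operator $\mathcal R$ on $\R^4$ all of whose sectional curvatures $\langle\mathcal R(e_1\wedge e_2),e_1\wedge e_2\rangle$ (over orthonormal $e_1,e_2$) lie in $[-1,-\delta]$, namely
\begin{equation*}
\tfrac{1}{24}\,\scal^2-\tfrac12\,|\Ric_0|^2+|W|^2\ \le\ 6
\qquad\text{and}\qquad
\big|\,|W_+|^2-|W_-|^2\,\big|\ \le\ \tfrac{8}{3}(1-\delta)^2.
\end{equation*}
Both are constrained polynomial optimization problems over the compact body of algebraic curvature operators satisfying the sectional-curvature constraint, of exactly the type solved in the proof of \Cref{thm:estimates-general}: one maximizes each quadratic curvature invariant over that body. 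For the first inequality the maximum is $6$, attained \emph{only} by the curvature operator of constant curvature $-1$; equivalently, a negatively $\delta$-pinched $4$-manifold has Gauss--Bonnet integrand at most that of a hyperbolic one. The second inequality is sharp as well: equality holds (when $\delta=\tfrac14$) along the curvature operator of the complex hyperbolic plane, for which $W_-=0$, $\scal=-6$, and $|W_+|^2=\tfrac{1}{24}\scal^2=\tfrac32=\tfrac{8}{3}\big(1-\tfrac14\big)^2$.

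Integrating the first inequality over $M$ and dividing by $8\pi^2$ yields $\chi(M)\le\tfrac{3}{4\pi^2}\Vol(M,\g)$; integrating the second and dividing by $12\pi^2$ yields $|\sigma(M)|\le\tfrac{2}{9\pi^2}(1-\delta)^2\Vol(M,\g)$. For the rigidity statement: if $\chi(M)=\tfrac{3}{4\pi^2}\Vol(M,\g)$, then equality holds in the first pointwise inequality almost everywhere, hence everywhere by continuity, so $\g$ has constant sectional curvature $-1$ and $(M^4,\g)$ is hyperbolic; conversely, a finite-volume hyperbolic $4$-manifold has Gauss--Bonnet integrand identically $6$, so Chern--Gauss--Bonnet (using $\Vol(S^4)=\tfrac{8\pi^2}{3}$) gives $\chi(M)=\tfrac{3}{4\pi^2}\Vol(M,\g)$.

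The principal obstacle is the first pointwise inequality \emph{together with its rigidity}: it is not enough to bound the Gauss--Bonnet integrand by $6$ under two-sided negative pinching, one must show that $6$ is attained only at constant curvature $-1$. Pinning down the unique maximizer of this quadratic form over the sectional-curvature constraint body, rather than just its maximal value, is the delicate point, although it is precisely the sort of analysis carried out for \Cref{thm:estimates-general}. A secondary technical issue is controlling the Cheeger--Gromov correction term over the cusps in the non-closed finite-volume case, so as to legitimately use the Hirzebruch integral for $\sigma_{(2)}(M,\g)$.
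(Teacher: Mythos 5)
Your proposal follows essentially the same route as the paper's proof: express $\chi(M)$ and $\sigma(M)$ as curvature integrals, bound the integrands pointwise over the convex body of $\delta$-pinched curvature operators, and deduce rigidity for the $\chi$-bound from uniqueness of the maximizer. The $\sigma$-estimate is exactly \Cref{prop:weylbound} with $\eta=-1$, and the $\chi$-estimate $\underline{\chi}(R)\leq\tfrac34$ (equivalently your normalization $\tfrac{1}{24}\scal^2-\tfrac12|\Ric_0|^2+|W|^2\leq 6$) is what the paper establishes by discarding the nonpositive $-\tfrac14|C|^2$ term and maximizing the remaining positive-definite form $\tfrac34u^2+\tfrac18|W_+|^2+\tfrac18|W_-|^2$ on the Einstein simplex $\esimp^5$, where the unique maximizer is the vertex $p_5=R_{S^4}$; this also pins down the rigidity. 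The only genuine gap is that you leave precisely this optimization — the maximum value \emph{and} the uniqueness of the maximizer — as asserted rather than proved; the rest of your outline is sound. Two small clarifications are also in order: the paper does \emph{not} optimize over the full $54$-dimensional body of pinched operators but first projects to the Einstein simplex via \Cref{lem:projecteinstein}, which is what makes the problem tractable; and the Cheeger--Gromov index theorem (\cite[Thms.~3.1(3), 6.1]{cheeger-gromov1}) gives exact equality of $\sigma_{(2)}(M,\g)$ with the Hirzebruch integral under bounded geometry and finite volume, so there is no residual ``correction term near the cusps'' to control.
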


The above upper bound on $\chi(M)$ is likely known among experts (e.g., it follows from~\cite[Thm.~1]{ville-vol}), but we are unaware of any such prior results for $\sigma(M)$. As in Theorem~\ref{mainthm:Lambda}, $\sigma(M)$ is to be understood as the $L^2$-signature if $(M^4,\g)$ is noncompact.

Using Bishop Volume Comparison in the inequalities in Theorem~\ref{mainthm:bounds-neg}, we see that negatively $\delta$-pinched closed oriented $4$-manifolds 
$(M^4,\g)$ with $\diam(M,\g)\leq D$ have 
\begin{equation*}
\textstyle
\chi(M)\leq 2(2+\cosh D)\sinh^4\frac{D}{2}, \; \text{ and } \; |\sigma(M)|\leq \frac{16}{27}(1-\delta)^2(2+\cosh D)\sinh^4\frac{D}{2},
\end{equation*}
and, once again, equality in the upper bound for $\chi(M)$ holds if and only if $(M^4,\g)$ is hyperbolic.
It follows from Gromov~\cite[\S 1.7 (1)]{gromov-neg} that, for all $D>0$ and $V>0$, there exists $0<\delta(D,V)<1$ such that if a closed $4$-manifold $M$ is negatively $\delta(D,V)$-pinched and $\sigma(M)\neq0$, then $\diam(M,\g)\geq D$ and $\Vol(M,\g)\geq V$. Note that Theorem~\ref{mainthm:bounds-neg} \emph{quantifies} this result, yielding explicit estimates on $\delta(D,V)$, since it implies that closed $\delta$-pinched $4$-manifolds with $\sigma(M)\neq0$ have $\Vol(M,\g)\geq \frac{9\pi^2}{2(1-\delta)^2}$ and $\diam(M,\g)\geq D$ where $D>0$ satisfies $(2+\cosh D)\sinh^4\frac{D}{2}=\frac{27}{16(1-\delta)^2}$.

\subsection{Methods of proof}
Our results on pinched $4$-manifolds are proven with a blend of Differential Geometry and Convex Algebraic Geometry. By the Chern--Gauss--Bonnet formula and Hirzebruch signature formula, the Euler characteristic and signature of $(M^4,\g)$ can be computed as integrals \eqref{eq:integrals} of quadratic forms $\underline{\chi}(R)$ and $\underline{\sigma}(R)$ on its curvature operator $R$, respectively. (These formulas generalize to the case in which $M$ is noncompact, see Section~\ref{subsec:integrals}.) We use these to estimate $\chi(M)$ and $\sigma(M)$ combining pointwise bounds on such integrands, obtained through optimization methods; and global restrictions on the diameter and volume of such manifolds, obtained with standard Comparison Geometry techniques. Moreover, since $\underline{\chi}(R)$ and $\underline{\sigma}(R)$ only have degree 2 terms, 
any pointwise bounds on \emph{positively} $\delta$-pinched operators automatically hold for \emph{negatively} $\delta$-pinched operators.

The key link with Convex Algebraic Geometry is the Finsler--Thorpe Trick (Lemma~\ref{lem:finslerthorpe}), a distinctly $4$-dimensional phenomenon (see~\cite{cagco}) that characterizes the set of positively $\delta$-pinched (algebraic) curvature operators $R\colon\wedge^2\R^4\to\wedge^2\R^4$ as a \emph{spectrahedral shadow}. 
More precisely, it is the intersection of the projections onto the space $\Sym^2_b(\wedge^2\R^4)$ of the \emph{spectrahedra} of operators in $\Sym^2(\wedge^2\R^4)$ with all eigenvalues $\geq\delta$ and $\leq 1$, respectively. Recall that $\Sym^2_b(\wedge^2\R^4)\subset \Sym^2(\wedge^2\R^4)$ is the subspace of operators satisfying the first Bianchi identity, and its orthogonal complement is spanned by the Hodge star operator $*$. 
Moreover, a spectrahedral shadow is (by definition) a linear projection of a spectrahedron, and the intersection of two spectrahedral shadows is also a spectrahedral shadow.

As explained in Section~\ref{sec:simplex}, it also follows from Finsler--Thorpe's Trick that projecting away the traceless Ricci part $R_{\mathcal L}$ of a pinched curvature operator~$R$ produces an
Einstein curvature operator $R-R_{\mathcal L}$ which is \emph{at least} as pinched as~$R$.
The set of such operators is a far simpler convex set: it is the orbit under the $\SO(4)$-action on $\Sym^2_b(\wedge^2\R^4)$ of a set affinely equivalent to a simplex $\esimp^5\subset\R^5$, which we call the \emph{Einstein simplex}. Similarly, there is an \emph{augmented Einstein simplex} $\esimp^6\subset\R^6$ that parametrizes $\SO(4)$-orbits of $\delta$-pinched Einstein curvature operators $R$ and the corresponding $t_1\in\R$ such that $R+t_1\,*\in \Sym^2(\wedge^2\R^4)$ has all eigenvalues~$\geq\delta$.

Thus, finding extrema of $\SO(4)$-invariant quadratic forms on $R$ that \emph{do not} depend on $R_{\mathcal L}$ reduces to a quadratic optimization problem on the simplex $\esimp^5$. Even though general quadratic programming is NP-hard~\cite{sahni, qp-np-hard}, the cases at hand are manageable. For example, we are able to explicitly compute the maximum of $a|W_+|^2+b|W_-|^2$, for any $a,b\in\R$, where $W_\pm$ are the self-dual and anti-self-dual Weyl parts of a $\delta$-pinched curvature operator, and also characterize the equality case, see Proposition~\ref{prop:weylbound}. This sharp pointwise estimate, which is likely to have other applications, is the main new input to prove Theorems \ref{mainthm:bounds-pos} and \ref{mainthm:bounds-neg}.

On the other hand, the $1$-parameter family of $\SO(4)$-invariant quadratic forms $I_\lambda(R)=\underline{\chi}(R)-\frac{1}{\lambda}\underline{\sigma}(R)$ used to prove Theorem~\ref{mainthm:Lambda},
and the particular case $I_{\frac{1}{2}}(R)$ needed for Theorem~\ref{mainthm:classification}, \emph{do depend} on the traceless Ricci part $R_{\mathcal L}$. To overcome this, we prove (Proposition~\ref{prop:boundC}) an upper bound on $|R_{\mathcal L}|^2$ for any $R\in\Sym^2_b(\wedge^2\R^4)$ in terms of its minimal (or maximal) sectional curvature $k\in\R$, $\scal$, $W_\pm$, and a value $t\in\R$ such that $\pm(R-k\id)+t\,*$ is positive-semidefinite (which exists by the Finsler--Thorpe Trick). 
Aside from its independent interest, Proposition~\ref{prop:boundC} can be used to eliminate the dependence on $R_{\mathcal L}$ and find a quadratic form $Q_\lambda\colon\esimp^6\to\R$
on the augmented Einstein simplex $\esimp^6$ whose values bound those of $I_\lambda$ from below. Then, optimizing $Q_\lambda$ on $\esimp^6$ and requesting that its minimum be nonnegative 
determines an explicit description of a semialgebraic set in the $(\delta,\lambda)$-plane.
Applying Cylindrical Algebraic Decomposition to this set gives, for each $\delta$, the desired explicit lower bound on $\lambda$ that is sufficient to ensure $I_\lambda(R)\geq0$ for all $\delta$-pinched curvature operators, and hence $\sigma(M)\leq \lambda\cdot\chi(M)$ for all $\delta$-pinched $4$-manifolds $(M^4,\g)$. This yields Theorem~\ref{thm:estimates-general}, which is our main technical result leading to Theorem~\ref{mainthm:Lambda}.

\begin{figure}[!ht]
\includegraphics[width=\textwidth]{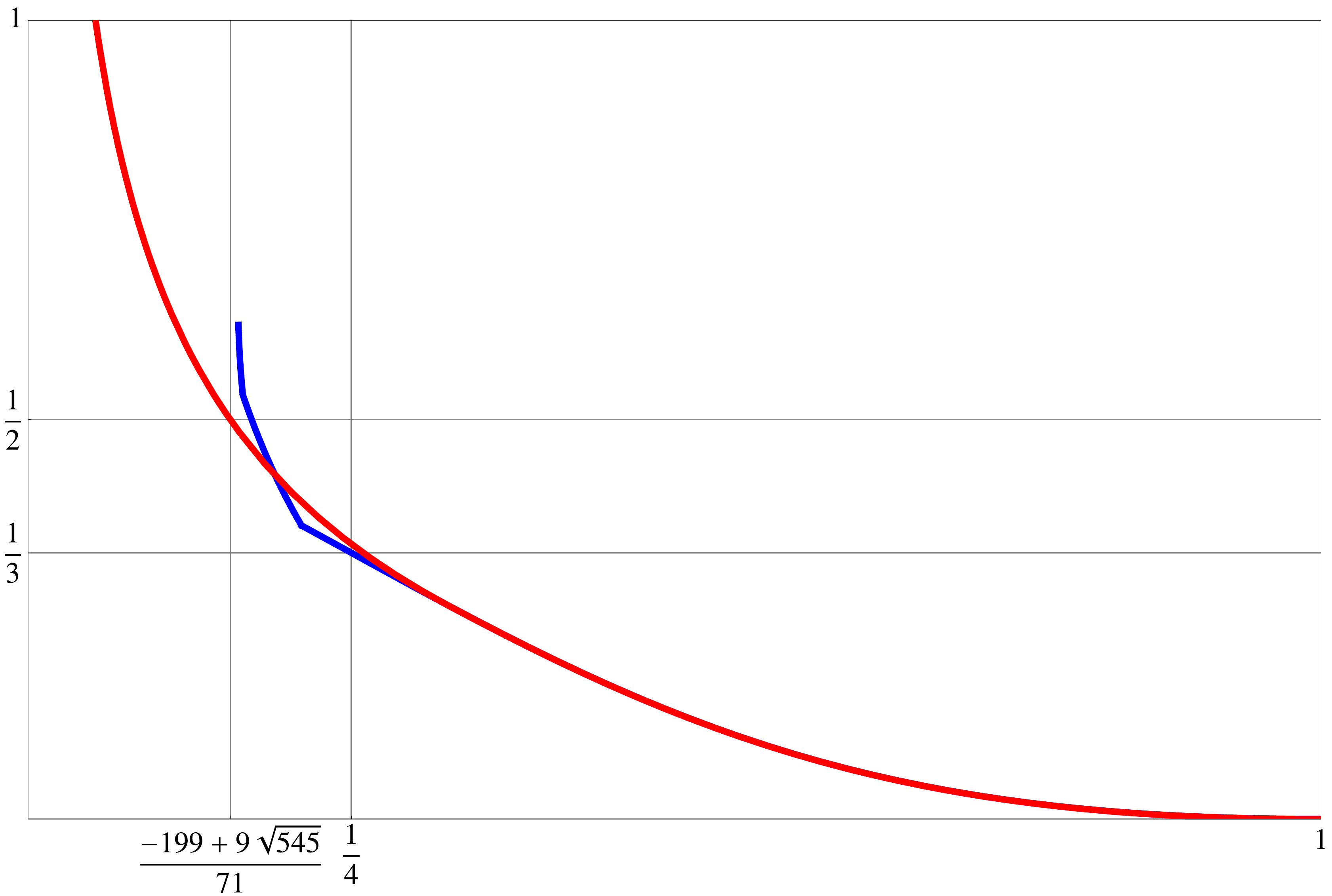}
\caption{Graphs of functions $\lambda^*$ from Theorem~\ref{thm:estimates-general} (red) and $\lambda^\V$ from Theorem~\ref{thm:ville} (blue); whose minimum is $\lambda$ given by \eqref{eq:bestlambda}.
}\label{fig:graph}
\end{figure}

\subsection{\texorpdfstring{Explicit function $\lambda(\delta)$}{Explicit function}}\label{subsec:explicit}
As mentioned above, Theorem~\ref{mainthm:Lambda} is the combination of Theorems~\ref{thm:estimates-general} and \ref{thm:ville},
each of which proves \eqref{eq:mainthm} for a certain explicit function of $\delta$. Comparing these functions and extracting $\lambda(\delta)=\min\{\lambda^*(\delta),\lambda^\V(\delta)\}$ gives:
\begin{equation}\label{eq:bestlambda} 
\lambda(\delta)=\begin{cases}
 \dfrac{\sqrt{\frac{24}{\delta}+8 -8 \delta+\delta^2}+\delta-4 }{6 (3-\delta)},
&  0<\delta<\delta_1,  \\[8pt]
\dfrac{4 }{3 \sqrt{15}} \dfrac{1-\delta}{\sqrt{\delta (\delta+2)}},
  &   \delta_1\leq \delta<\delta_2, \\[12pt]
\dfrac{26\delta^2+8\delta+2-2\sqrt3\sqrt{55\delta^4+40\delta^3+6\delta^2+8\delta-1}}{3(1-\delta)^2}, &  \delta_2\leq \delta\leq \delta_3, \\[6pt]
\dfrac{8(1-\delta)^2}{24 \delta^2-12 \delta+15}, &  \delta_3\leq \delta\leq 1,
\end{cases}
\end{equation}
where
\begin{enumerate}[\rm (i)]
\item $\delta_1\cong0.069$ is the smallest real root of the polynomial $2\delta^3-40\delta^2+89\delta-6$,  
\item $\delta_2\cong 0.191$ is the largest real root of the polynomial $2279 \delta^6 + 6246 \delta^5 + 4470 \delta^4 + 2060 \delta^3 - 450 \delta^2 - 24 \delta - 1$, 
\item $\delta_3\cong 0.211$ is the largest real root of the polynomial $140\delta^4 + 40\delta^3 - 6\delta^2 + 88\delta - 19$.
\end{enumerate}

\subsection*{Acknowledgements}
It is a great pleasure to thank
Grigori Avramidi,
Igor Belegradek, 
McFeely (Jackson) Goodman,
Karsten Grove, 
Matt Gursky, 
Claude LeBrun, 
T.~T\^am Nguy$\tilde{\hat{\mathrm{e}}}$n-Phan, and 
Marina Ville for many helpful conversations.
The first-named author was supported by the National Science Foundation grant DMS-1904342 and CAREER grant DMS-2142575;
the second-named author was supported by Deutsche Forschungsgemeinschaft grant 421473641;
the third-named author was supported by the Deutsche Forschungsgemeinschaft grants ME 4801/1-1 and SFB TRR 191, and the National Science Foundation grant DMS-2005373.

\section{Preliminaries on 4-manifolds}\label{sec:prelim4}

In this section, we discuss a multitude of basic topics regarding $4$-manifolds needed in the remainder of the paper, also fixing notations and conventions.

\subsection{Four-dimensional curvature operators}
We now briefly recall some well-known facts about curvature operators of oriented $4$-manifolds, for details, we refer the reader to \cite[Chap.~1.G-H]{Besse}.

The space $\Sym^2(\wedge^2 \R^4)$ of symmetric endomorphisms of $\wedge^2\R^4$ decomposes as the orthogonal direct sum of four irreducible pairwise non-isomorphic $\O(4)$-submodules,
\begin{equation}\label{eq:decomp}
\Sym^2(\wedge^2 \R^4)=\mathcal U \oplus \mathcal L \oplus \mathcal W\oplus \wedge^4\R^4,
\end{equation}
and, accordingly, we write $R=R_{\mathcal U}+R_{\mathcal L}+R_{\mathcal W}+R_{\wedge^4}$ to indicate the components of an element $R\in \Sym^2(\wedge^2 \R^4)$. 
The (real) dimensions of the spaces in \eqref{eq:decomp} are $1$, $9$, $10$, and $1$, respectively.
In particular, the summand $R_{\wedge^4}$ is a scalar multiple of the Hodge star operator $*\in \Sym^2(\wedge^2 \R^4)$, and it vanishes if and only if $R$ satisfies the first Bianchi identity. We denote by $\Sym^2_b(\wedge^2\R^4)$ the subspace of elements $R\in\Sym^2(\wedge^2 \R^4)$ with $R_{\wedge^4}=0$. Such $R$ are called \emph{algebraic curvature operators}, while general elements of $\Sym^2(\wedge^2 \R^4)$ are often called \emph{modified curvature operators}.

The curvature operator of a Riemannian $4$-manifold $(M^4,\g)$, at a point $p\in M$, is an element of $\Sym^2_b(\wedge^2\R^4)$ once $T_pM$ is isometrically identified with $\R^4$; and, conversely, every element of $\Sym^2_b(\wedge^2\R^4)$ can be realized as the curvature operator of a Riemannian $4$-manifold at a point. In terms of the Kulkarni--Nomizu product,
\begin{equation*}
R_{\mathcal U}= \tfrac{\scal}{24}\g\owedge \g, \quad\text{and} \quad R_{\mathcal L}=\tfrac12\g\owedge\!\left(\Ric-\tfrac{1}{4}\scal\right),
\end{equation*}
i.e., geometrically, $R_\mathcal U$ encodes the scalar curvature, $R_\mathcal L$ the traceless Ricci tensor, and $R_\mathcal W$ the Weyl tensor, so an algebraic curvature operators $R$ is called \emph{scalar flat} if $R_{\mathcal U}=0$, \emph{Einstein} if $R_\mathcal L=0$, and \emph{locally conformally flat} if $R_{\mathcal W}=0$.

\subsection{Further decompositions}
The Hodge star operator $*\in\Sym^2(\wedge^2\R^4)$ has eigenvalues $\pm1$, and the corresponding eigenspaces $\wedge^2_+\R^4$ and $\wedge^2_-\R^4$ are called the self-dual and anti-self-dual subspaces, respectively. The orthogonal direct sum
\begin{equation}\label{eq:selfdualdec}
    \wedge^2\R^4=\wedge^2_+\R^4\oplus\wedge^2_-\R^4
\end{equation}
is preserved by $\SO(4)$, whose action on \eqref{eq:selfdualdec} factors through the standard product action of its $\Z_2$-quotient $\SO(3)\times \SO(3)$ on $\R^3\oplus\R^3$.
Restricting the $\O(4)$-representation \eqref{eq:decomp} to $\SO(4)\subset\O(4)$, the subspace $\mathcal W$ further decomposes into two $\SO(4)$-irreducibles $\mathcal W=\mathcal W_+\oplus\mathcal W_-$, which are the $5$-dimensional subspaces $\mathcal W_\pm\cong\Sym^2_0(\wedge^2_\pm \R^4)$.
In particular, if $(M^4,\g)$ is an \emph{oriented} Riemannian $4$-manifold, then the summand $R_{\mathcal W}$ of its curvature operator splits accordingly as $R_{\mathcal W_+}+R_{\mathcal W_-}$.

Restricting the $\O(4)$-representation \eqref{eq:decomp} once more, to $\U(2)\subset\SO(4)$, the subspace $\wedge^2_+\R^4$ decomposes 
into two $\U(2)$-irreducibles, of dimensions $1$ and $2$, and thus $\mathcal W_+$ decomposes into three $\U(2)$-irreducibles, of dimensions $1$, $2$, and $2$, while $\mathcal L$ decomposes into two $\U(2)$-irreducibles, of dimensions $3$ and $6$, see \cite{armstrong,tricerri-vanhecke}. 

\subsection{Canonical form}
Given $R\in\Sym_b^2(\wedge^2\R^4)$, the above $\SO(4)$-action on \eqref{eq:selfdualdec} can be used to diagonalize $R$ on each subspace $\wedge^2_\pm\R^4$, obtaining orthonormal bases of $\wedge^2_+\R^4$ and $\wedge^2_-\R^4$ so that the matrix representing $R$ has the block structure
\begin{equation}\label{eq:curvop4}
R=\begin{pmatrix}
u\id+W_+ & C^\mathrm t \\
C & u\id+W_-
\end{pmatrix},
\end{equation}
where $u\in\R$ is a scalar, $W_\pm$ are traceless diagonal $3\times 3$ matrices, with eigenvalues $w_1^\pm\leq w_2^\pm\leq w_3^\pm$, 
and $C$ is a $3\times 3$ matrix.
To simplify notation, we often use vectors
\begin{equation}\label{eq:widef}
\vec w_\pm:=\big(w_1^\pm,w_2^\pm,w_3^\pm\big),
\end{equation}
and write $W_\pm=\diag(w_1^\pm,w_2^\pm,w_3^\pm)$.
For convenience, we assume henceforth that every algebraic curvature operator $R$ is in the above canonical form \eqref{eq:curvop4}.

The components in \eqref{eq:curvop4} correspond precisely to $R_\mathcal{U}$, $R_\mathcal{L}$, and $R_{\mathcal{W}_\pm}$. Note that 
\begin{equation*}
\textstyle u=\frac{1}{12}\scal = \frac{1}{6} \tr R
\end{equation*}
and $R$ is Einstein if and only if $C=0$, which, in turn, is equivalent to $R$ and~$*$ commuting. Moreover, $R$ is called \emph{half conformally flat} if one of the self-dual or anti-self-dual Weyl tensors $W_\pm$ vanishes.
The involution of \eqref{eq:selfdualdec} given by reversing orientation interchanges $\wedge^2_\pm \R^4$, and hence interchanges $W_\pm$, transposes $C$, but leaves $u$ invariant. 
This is the effect on the curvature operator of an oriented Riemannian $4$-manifold if its orientation is reversed. 

The algebraic curvature operator $R$ is \emph{K\"ahler} if $R\in\Sym^2_b(\mathfrak u(2))$, where $\mathfrak u(2)\subset\mathfrak o(4)\cong\wedge^2\R^4$ is the Lie algebra of $\U(2)\subset\O(4)$, or, equivalently, if $JR=RJ=R$, where $J\in\Sym^2(\wedge^2\R^4)\cong\Sym^2(\wedge^2\C^2)$ is the map $J(v\wedge w)= \sqrt{-1}\,v\wedge\sqrt{-1}\, w$. In terms of the canonical form \eqref{eq:curvop4}, this means that $\vec w_+=(-u,-u,2u)$, or $\vec w_+=(2u,-u,-u)$, depending on the sign of $u$, and $C^\mathrm t$ has at most one nonzero row, so that $\ker R$ contains $\mathfrak u(2)^\perp\subset\wedge^2_+\R^4$.

\begin{remark}
There are other \emph{canonical} ways to represent the matrix of a curvature operator $R\in\Sym^2_b(\wedge^2\R^4)$ aside from the above \eqref{eq:curvop4}, see e.g.~Ville~\cite{ville-negative,ville-positive} and~Appendix~\ref{app:ville}. Furthermore, for Einstein curvature operators, see \cite{berger-canonicalform,singer-thorpe}.
\end{remark}

\subsection{Sectional curvature and pinching}
Consider the oriented Grassmannian
\begin{equation}\label{eq:grassmannian}
\Gr^+(\R^4)=\big\{ \alpha\in\wedge^2\R^4 : |\alpha|^2=1, \, \alpha\wedge\alpha=0\big\}\subset \wedge^2 \R^4,
\end{equation}
which, using \eqref{eq:selfdualdec}, can be realized as $S^2\times S^2\subset\R^3\oplus\R^3$, cf.~\eqref{eq:grassmannian-realization}.
Given a modified curvature operator $R\in\Sym^2(\wedge^2\R^4)$, the \emph{sectional curvature} of $\alpha\in\Gr^+(\R^4)$ is 
\begin{equation*}
\sec_R(\alpha):=\langle R(\alpha),\alpha\rangle.
\end{equation*}
Clearly, $\sec_R$ depends linearly on $R$. Moreover, given $k\in\R$, we write $\sec_R\geq k$ if $\sec_R(\alpha)\geq k$ for all $\alpha\in\Gr^+(\R^4)$, and analogously for $\sec_R\leq k$.

\begin{definition}\label{def:dpinched}
Given $0<\delta\leq1$, an algebraic curvature operator $R\in\Sym^2_b(\wedge^2\R^4)$ is called \emph{positively $\delta$-pinched} if $\delta\leq \sec_R\leq 1$, \emph{negatively $\delta$-pinched} if $-R$ is positively $\delta$-pinched, and \emph{$\delta$-pinched} if either $R$ or $-R$ is positively $\delta$-pinched.
\end{definition}

The following characterization of sectional curvature bounds for algebraic curvature operators is a consequence of Finsler's Lemma in Optimization Theory, that became known as Thorpe's trick in the Geometric Analysis community, see~\cite{cagco}.

\begin{lemma}[Finsler--Thorpe Trick]\label{lem:finslerthorpe}
Let $R\in \Sym_b^2(\wedge^2\R^4)$ be an algebraic curvature operator. Then $\sec_R\geq0$ if and only if there exists $t\in\R$ such that $R+t\,*\succeq0$.
\end{lemma}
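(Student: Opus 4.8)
The plan is to prove the Finsler--Thorpe Trick by reducing the statement about $\sec_R \geq 0$ on the Grassmannian $\Gr^+(\R^4)$ to a statement about positive semidefiniteness of a one-parameter family of symmetric operators, and then invoke Finsler's Lemma. The crucial observation is the identification $\Gr^+(\R^4) = \{\alpha \in \wedge^2\R^4 : |\alpha|^2 = 1,\ \alpha\wedge\alpha = 0\}$, and that under the splitting \eqref{eq:selfdualdec} the condition $\alpha \wedge \alpha = 0$ becomes the \emph{quadratic} constraint $|\alpha_+|^2 = |\alpha_-|^2$, where $\alpha = \alpha_+ + \alpha_-$; indeed $\alpha\wedge\alpha$ is a multiple of $(|\alpha_+|^2 - |\alpha_-|^2)$ times the volume form, since $*$ acts as $\pm 1$ on $\wedge^2_\pm\R^4$. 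Combined with $|\alpha|^2 = |\alpha_+|^2 + |\alpha_-|^2 = 1$, the decomposable unit 2-vectors are exactly those with $|\alpha_+|^2 = |\alpha_-|^2 = \tfrac12$. The key algebraic identity is then that $\langle *(\alpha),\alpha\rangle = |\alpha_+|^2 - |\alpha_-|^2$, so on $\Gr^+(\R^4)$ we have $\langle *(\alpha),\alpha\rangle = 0$ identically.

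First I would set up Finsler's Lemma in the form needed: given two quadratic forms $q_1(x) = \langle R x, x\rangle$ and $q_2(x) = \langle *\, x, x\rangle$ on a finite-dimensional real vector space, if $q_2$ is indefinite (which $\langle *\,\cdot,\cdot\rangle$ is, since $*$ has both eigenvalues $+1$ and $-1$), then $q_1(x) \geq 0$ for all $x$ with $q_2(x) = 0$ if and only if there exists $t \in \R$ such that $q_1 - t\, q_2 \geq 0$ as a quadratic form, i.e. $R - t\,* \succeq 0$. (The sign of $t$ is immaterial since we may replace $t$ by $-t$.) This is the classical $\mathcal{S}$-procedure / Finsler--Calabi lemma; I would cite it and the references to \cite{cagco}.

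Next I would assemble the pieces. One direction is immediate: if $R + t\,* \succeq 0$ for some $t$, then for any $\alpha \in \Gr^+(\R^4)$ we have $\sec_R(\alpha) = \langle R\alpha,\alpha\rangle = \langle (R + t\,*)\alpha,\alpha\rangle - t\langle *\alpha,\alpha\rangle = \langle(R+t\,*)\alpha,\alpha\rangle \geq 0$, using that $\langle *\alpha,\alpha\rangle = 0$ on the Grassmannian. For the converse, suppose $\sec_R \geq 0$. A priori this only gives $\langle R\alpha,\alpha\rangle \geq 0$ on $\{|\alpha|^2 = 1,\ \langle *\alpha,\alpha\rangle = 0\}$, but since $\langle R\,\cdot,\cdot\rangle$ is homogeneous of degree $2$, this scales to: $\langle R\alpha,\alpha\rangle \geq 0$ for all $\alpha$ with $\langle *\alpha,\alpha\rangle = 0$ (the origin being trivial). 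Applying Finsler's Lemma with $q_1 = \langle R\,\cdot,\cdot\rangle$ and $q_2 = \langle *\,\cdot,\cdot\rangle$ yields $t \in \R$ with $R - t\,* \succeq 0$, and relabeling $-t \mapsto t$ gives the claim.

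The main obstacle — really the only subtle point — is verifying the hypothesis of Finsler's Lemma that the constraint form $q_2$ is genuinely indefinite (not merely nonzero), and making sure the homogenization step is airtight: that $\{\alpha : |\alpha|^2 = 1,\ \alpha\wedge\alpha = 0\}$ spans, under scaling, precisely the zero set of $\langle *\,\cdot,\cdot\rangle$, rather than a proper subset of it. This is where the realization $\Gr^+(\R^4) \cong S^2 \times S^2$ and the computation $\alpha\wedge\alpha = (|\alpha_+|^2 - |\alpha_-|^2)\,\omega$ (with $\omega$ the volume form) are used: the decomposability condition $\alpha\wedge\alpha=0$ is \emph{equivalent} to $\langle *\alpha,\alpha\rangle = 0$ once we are on the unit sphere, and homogeneity extends this equivalence to all of $\wedge^2\R^4$. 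Everything else is the direct application of the cited optimization lemma.
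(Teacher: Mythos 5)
The paper does not actually prove this lemma; it is stated as a consequence of Finsler's Lemma and the reference~\cite{cagco} is cited. Your reconstruction is correct and follows exactly the intended route: you identify the Pl\"ucker constraint $\alpha\wedge\alpha=0$ with the vanishing of the quadratic form $\langle*\alpha,\alpha\rangle=|\alpha_+|^2-|\alpha_-|^2$, reduce $\sec_R\ge 0$ via homogeneity to nonnegativity of $\langle R\,\cdot,\cdot\rangle$ on the real quadric $\{\langle*\,\cdot,\cdot\rangle=0\}$, and invoke the non-strict Finsler lemma, correctly flagging that the hypothesis making it applicable is that the constraint form is indefinite (which holds since $*$ has eigenvalues $\pm1$). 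That indefiniteness is indeed the crucial point: without it the non-strict version fails, and it is precisely what guarantees, via Dines's convexity theorem and a separating hyperplane, that the Lagrange multiplier can be taken with a strictly positive coefficient on $q_1$. The only quibble is terminological: what you need is the equality-constrained, non-strict Finsler/Dines lemma rather than the $\mathcal S$-procedure (which is for inequality constraints with a Slater condition), but the mathematical content is right and your proof is complete.
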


In other words, the set of algebraic curvature operators with $\sec\ge0$ is the image of the set of positive-semidefinite operators in $\Sym^2(\wedge^2\R^4)$ under the orthogonal projection onto $\Sym_b^2(\wedge^2\R^4)$. Since $\sec_R$ depends linearly on $R$, and $\sec_{\id}=1$, the following is an immediate consequence of Lemma~\ref{lem:finslerthorpe}.

\begin{corollary}
An algebraic curvature operator $R\in\Sym^2_b(\wedge^2\R^4)$ is positively $\delta$-pinched if and only if there exist $t_1,t_2\in\R$ such that
\begin{equation}\label{eq:t1t2bound}
R-\delta \id + t_1\, * \succeq 0, \qquad \text{and}\qquad \id-R+t_2\,*\succeq 0.
\end{equation}
\end{corollary}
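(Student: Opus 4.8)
The plan is to deduce this Corollary directly from the Finsler--Thorpe Trick (\Cref{lem:finslerthorpe}) by applying it twice, once to each of the two one-sided sectional curvature bounds that together comprise positive $\delta$-pinching. First I would observe that, by \Cref{def:dpinched}, $R$ is positively $\delta$-pinched if and only if $\sec_R(\alpha)\geq\delta$ and $\sec_R(\alpha)\leq 1$ for every $\alpha\in\Gr^+(\R^4)$, which I rewrite as $\sec_{R-\delta\,\id}\geq 0$ and $\sec_{\id-R}\geq 0$. Here I use that $\sec$ depends linearly on the (modified) curvature operator, so that $\sec_{R-\delta\,\id}(\alpha)=\sec_R(\alpha)-\delta\sec_{\id}(\alpha)$, together with the normalization $\sec_{\id}(\alpha)=\langle\alpha,\alpha\rangle=|\alpha|^2=1$ for $\alpha\in\Gr^+(\R^4)$; the analogous computation gives $\sec_{\id-R}(\alpha)=1-\sec_R(\alpha)$.

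The mild subtlety is that $R-\delta\,\id$ and $\id-R$ need not lie in $\Sym^2_b(\wedge^2\R^4)$: the identity $\id$ is a multiple of $\g\owedge\g$ and hence satisfies the first Bianchi identity, so in fact $R-\delta\,\id,\ \id-R\in\Sym^2_b(\wedge^2\R^4)$ whenever $R$ does — this is worth a half-sentence to confirm, since \Cref{lem:finslerthorpe} is stated for algebraic curvature operators. Granting that, I apply \Cref{lem:finslerthorpe} to $R-\delta\,\id$ to obtain $t_1\in\R$ with $(R-\delta\,\id)+t_1\,*\succeq 0$, and separately to $\id-R$ to obtain $t_2\in\R$ with $(\id-R)+t_2\,*\succeq 0$, which are exactly the two conditions in \eqref{eq:t1t2bound}. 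The converse is even easier: if such $t_1,t_2$ exist, then for any $\alpha\in\Gr^+(\R^4)$, pairing each positive-semidefinite operator with $\alpha$ and using $\langle *\,\alpha,\alpha\rangle=\alpha\wedge\alpha=0$ (the defining condition of $\Gr^+(\R^4)$) kills the $t_i\,*$ terms, yielding $\sec_R(\alpha)\geq\delta$ and $\sec_R(\alpha)\leq 1$, i.e.\ $R$ is positively $\delta$-pinched.

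There is essentially no main obstacle here — the statement is a formal corollary, and the excerpt already flags it as ``an immediate consequence'' of the Finsler--Thorpe Trick. The only points requiring care are the two bookkeeping items above: (a) checking that subtracting or adding a multiple of $\id$ preserves membership in $\Sym^2_b(\wedge^2\R^4)$, so that \Cref{lem:finslerthorpe} is applicable; and (b) the linearity-plus-normalization step $\sec_{\id}\equiv 1$ that converts the two-sided curvature pinching into two instances of the one-sided nonnegativity hypothesis of \Cref{lem:finslerthorpe}. Both are routine, so the proof is just a few lines.
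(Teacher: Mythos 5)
Your argument is exactly the paper's: it observes that $\sec$ is linear in the operator with $\sec_{\id}\equiv 1$, rewrites the two-sided pinching as $\sec_{R-\delta\id}\ge 0$ and $\sec_{\id-R}\ge 0$, and applies the Finsler--Thorpe Trick to each. The bookkeeping points you flag (that $\id\in\Sym^2_b(\wedge^2\R^4)$ so the lemma applies, and that $\langle *\,\alpha,\alpha\rangle=0$ on $\Gr^+(\R^4)$ for the converse) are correct and complete the routine verification the paper leaves implicit.
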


\begin{example}
Useful models of algebraic curvature operators are given by the curvature operators of symmetric spaces, which are trivially constant. Recall that each compact symmetric space has a noncompact dual, and their curvature operators are the opposite of one another. For instance, the round $4$-sphere $S^4$, and its noncompact dual, the hyperbolic space $H^4$, have curvature operators
\begin{equation*}
R_{S^4}=\id, \quad \text{and}\quad R_{H^4}=-\id,
\end{equation*}
and hence have constant sectional curvature $1$ and $-1$, respectively. Both are obviously Einstein and locally conformally flat. Moreover, $R_{S^4}$ is clearly the unique algebraic curvature operator that is positively $\delta$-pinched for all $0<\delta\leq1$.

The curvature operators of the complex projective plane $\C P^2$, and its noncompact dual, the complex hyperbolic plane $\C H^2$, written in the form \eqref{eq:curvop4}, are:
\begin{equation}\label{eq:RCPH2}
R_{\C P^2}=\begin{pmatrix}
\diag(0,0,6) &  \\
 & 2\id
\end{pmatrix} \quad \text{and}\quad
R_{\C H^2}=\begin{pmatrix}
\diag(-6,0,0) &  \\
 & -2\id
\end{pmatrix},
\end{equation}
and hence have sectional curvatures $1\leq\sec_{R_{\C P^2}}\leq 4$ and $-4\leq\sec_{R_{\C H^2}}\leq-1$, respectively. 
In particular, $R=\tfrac14R_{\C P^2}$ is positively $\tfrac14$-pinched, with \eqref{eq:t1t2bound} satisfied setting $t_1=\tfrac14$ and $t_2=\tfrac12$. Both curvature operators \eqref{eq:RCPH2} are half conformally flat, K\"ahler, and Einstein. Reversing orientations, one obtains the curvature operators $R_{\overline{\C P^2}}$ and $R_{\overline{\C H^2}}$, having the same diagonal blocks as \eqref{eq:RCPH2} but in the reverse order.
\end{example}

\subsection{\texorpdfstring{Topology of closed $4$-manifolds}{Topology of closed 4-manifolds}}
Let $M$ be a closed oriented (smooth) $4$-man\-i\-fold, and denote by 
$b_k(M)=\operatorname{rank} H_k(M,\Z)$ its Betti numbers.
The \emph{intersection form} of $M$ is the unimodular symmetric bilinear form defined on the torsion-free part of the second cohomology $H^2(M,\Z)$ as
\begin{align*}
&Q_M\colon H^2(M,\Z)/\mathrm{torsion}\times H^2(M,\Z)/\mathrm{torsion}\longrightarrow\Z\\ 
&Q_M(\alpha,\beta)=(\alpha\smile\beta)([M]),
\end{align*}
where $[M]\in H_4(M,\Z)$ denotes the fundamental class of $M$. Alternatively, using de Rham cohomology and representing $\alpha,\beta\in\Omega^2(M)$ as $2$-forms on $M$, one has $Q_M(\alpha,\beta)=\int_M \alpha\wedge\beta$.
The number of positive and negative eigenvalues of $Q_M$, counted with multiplicities, are denoted $b_+(M)$ and $b_-(M)$, respectively. By Hodge Theory, $b_\pm(M)=\dim\{\alpha\in\Omega^2(M) : \Delta \alpha=0, \, *\,\alpha=\pm\alpha\}$.

The manifold $M$ is called \emph{definite} if $Q_M$ is definite, i.e., if either $b_+(M)=0$ or $b_-(M)=0$, and called \emph{indefinite} otherwise. Denoting by $\overline M$ the manifold $M$ with its reverse orientation, $Q_{\overline M}=-Q_M$, so $b_\pm(\overline M)=b_\mp(M)$. Moreover, $Q_{M_1\# M_2}=Q_{M_1}\oplus Q_{M_2}$.
For instance, $\C P^2$ is definite, since $b_+(\C P^2)=1$ and $b_-(\C P^2)=0$, and $b_+(\#^r\C P^2\#^s \overline{\C P^2})=r$ and $b_-(\#^r\C P^2\#^s \overline{\C P^2})=s$, while $S^2\times S^2$ is indefinite, as $b_+(S^2\times S^2)=b_-(S^2\times S^2)=1$, and $b_+(\#^r (S^2\times S^2))=b_-(\#^r (S^2\times S^2))=r$. Recall that $\overline{S^2\times S^2}=S^2\times S^2$, since the antipodal map on $S^2$ is orientation-reversing.

Clearly, $b_2(M)=b_+(M)+b_-(M)$.
Since $M$ is oriented, by Poincar\'e duality, $b_0(M)=b_4(M)=1$ and $b_1(M)=b_3(M)$. The \emph{Euler characteristic} and \emph{signature} of $M$ are given by
\begin{align*}
\chi(M)&=2-2b_1(M)+b_+(M)+b_-(M)\\
\sigma(M)&=b_+(M)-b_-(M).
\end{align*}
In particular, it follows that, for all closed oriented $4$-manifolds,
\begin{equation}\label{eq:basic_geography}
\chi(M)\equiv \sigma(M)\mod 2, \quad \text{ and }\quad \chi(M)\geq |\sigma(M)|-2b_1(M)+2.
\end{equation}

If $M$ is simply-connected, then $H^2(M,\Z)$ is free and $b_1(M)=b_3(M)=0$. In this case, the celebrated works of Donaldson~\cite{Donaldson83} and Freedman~\cite{Freedman82}, combined with the $\widehat A$-genus obstruction to $\scal>0$ for spin manifolds yields the following: 

\begin{theorem}[Donaldson, Freedman, Lichnerowicz]\label{thm:4Dmanifolds}
Let $M$ be a smooth, closed, oriented, simply-connected $4$-manifold that admits a metric with $\scal>0$.
\begin{enumerate}[\rm (i)]
\item If $M$ is non-spin, then $M$ is homeomorphic to $\#^r\C P^2\#^s \overline{\C P^2}$,
\item If $M$ is spin, then $\sigma(M)=0$ and $M$ is homeomorphic to $\#^r (S^2\times S^2)$,
\end{enumerate}
where $r=b_+(M)$ and $s=b_-(M)$; and if $r=0$ or $s=0$, then the corresponding (trivial) summand is $S^4$. 
\end{theorem}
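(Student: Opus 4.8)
\subsection*{Proof proposal}

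The plan is to combine the three deep results cited in the statement --- Lichnerowicz' vanishing theorem, Donaldson's diagonalization theorem, and Freedman's homeomorphism classification --- with the classical arithmetic of integral unimodular symmetric bilinear forms. Write $Q=Q_M$. Since $M$ is simply-connected, $H^2(M;\Z)$ is free, $b_1(M)=0$, and $Q$ is a unimodular symmetric bilinear form of rank $b_2(M)=b_+(M)+b_-(M)$ and signature $\sigma(M)$; moreover $M$ is spin if and only if $w_2(M)=0$, equivalently if and only if $Q$ is an \emph{even} form. Thus the spin/non-spin dichotomy of the statement is precisely the even/odd dichotomy for $Q$. (Note also that a simply-connected $4$-manifold with $b_2(M)=0$ has $H^2(M;\Z/2)=0$, hence $w_2(M)=0$, hence is spin; so in the non-spin case $Q\neq 0$ automatically.)

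In the spin case, the Lichnerowicz formula shows that on a metric with $\scal>0$ the Dirac operator is invertible, so its index vanishes; by Atiyah--Singer this index equals $\widehat{A}(M)$, which in dimension four is $\pm\tfrac18\sigma(M)$, whence $\sigma(M)=0$. If $b_2(M)=0$, then $M$ is a homotopy $4$-sphere and Freedman's theorem gives $M\approx S^4$. If $b_2(M)>0$, then $Q$ is even with $\sigma(Q)=0$, so $Q$ is indefinite, and by the classification of indefinite even unimodular forms $Q\cong a\,E_8\oplus b\,\bigl(\begin{smallmatrix}0&1\\1&0\end{smallmatrix}\bigr)$ for some $a\in\Z$, $b\geq 1$; since $\sigma(E_8)=\pm 8$ and $\sigma(Q)=0$, we must have $a=0$, i.e.\ $Q\cong\bigl(\begin{smallmatrix}0&1\\1&0\end{smallmatrix}\bigr)^{\oplus r}$ with $r=b_+(M)=b_-(M)$. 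This is the intersection form of the smooth manifold $\#^r(S^2\times S^2)$, so Freedman's classification (which for an even form pins down the homeomorphism type) yields $M\approx\#^r(S^2\times S^2)$, with $S^4$ in the degenerate case $r=0$.

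In the non-spin case, $Q$ is odd and nonzero. If $Q$ is definite, Donaldson's theorem forces $Q$ to be the standard diagonal form, $Q\cong\langle 1\rangle^{\oplus r}$ or $Q\cong\langle -1\rangle^{\oplus s}$ according to its sign; if $Q$ is indefinite, then being odd it is diagonalizable already by the classical structure theory for indefinite unimodular forms, $Q\cong\langle 1\rangle^{\oplus r}\oplus\langle -1\rangle^{\oplus s}$ with $r,s\geq 1$. In all cases $Q$ is isomorphic to the intersection form of $N:=\#^r\C P^2\#^s\overline{\C P^2}$ (with an $S^4$ summand when $r=0$ or $s=0$), and $r=b_+(M)$, $s=b_-(M)$. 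To upgrade $Q_M\cong Q_N$ to a homeomorphism, recall that Freedman's theorem classifies closed simply-connected topological $4$-manifolds by the pair formed by the intersection form and the Kirby--Siebenmann invariant $\mathrm{ks}\in\Z/2$; since $M$ and $N$ are both smooth, both Kirby--Siebenmann invariants vanish, and therefore $M\approx N$.

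I do not expect a genuine obstacle here, as Donaldson's and Freedman's theorems are used as black boxes and Lichnerowicz' computation is routine. The only points requiring care are organizational: correctly identifying spin/non-spin with even/odd, splitting the odd case into the definite subcase (handled by Donaldson) and the indefinite subcase (handled by classical form theory), checking that the numbers of $\langle\pm1\rangle$ (resp.\ hyperbolic) summands are exactly $b_\pm(M)$ (resp.\ $\tfrac12 b_2(M)$), and noting that the Kirby--Siebenmann invariant of a smooth manifold vanishes so that Freedman's list yields a unique homeomorphism type with the prescribed intersection form.
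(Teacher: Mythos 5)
Your proof is correct and follows essentially the same route as the paper's: combine Donaldson's diagonalization, Freedman's homeomorphism classification, and the Lichnerowicz--$\widehat{A}$-genus vanishing to rule out $E_8$ summands in the spin case. The paper packages the Donaldson/Freedman input as a single black-box classification statement (any smooth, closed, oriented, simply-connected $4$-manifold is homeomorphic to a connected sum of $\C P^2$'s and $\overline{\C P^2}$'s, or of $S^2\times S^2$'s and $M_{E_8}$'s) and then kills the $M_{E_8}$ summands with $\widehat{A}(M)=0\Rightarrow\sigma(M)=0$; you unpack that black box via the arithmetic of unimodular forms (even/odd dichotomy, Serre's classification of indefinite forms, Donaldson for the definite case, Kirby--Siebenmann vanishing for smooth manifolds), which is more detailed but amounts to the same argument.
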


\begin{proof}
By Donaldson~\cite{Donaldson83} and Freedman~\cite{Freedman82}, every smooth, closed, oriented, simply-connected $4$-manifold $M$ is homeomorphic to either:
\begin{itemize}
\item a connected sum of $\C P^2$'s and $\overline{\C P^2}$'s, if $M$ is non-spin,
\item a connected sum of $S^2\times S^2$'s and $M_{E_8}$'s, if $M$ is spin.
\end{itemize}
In the above, $M_{E_8}$ is a certain non-smooth $4$-manifold with $\sigma(M_{E_8})=8$, see \cite[Chap.~1]{DonaldsonKronheimer} for details. However, if $M$ is spin, then the existence of a metric with $\scal>0$ implies that $\widehat A(M)=0$, and hence $\sigma(M)=0$, see e.g.~\cite[\S 6.72]{Besse}.
Therefore, no copies of $M_{E_8}$ may appear in this situation, concluding the proof.
\end{proof}

\begin{remark}
The \emph{converse} to Theorem~\ref{thm:4Dmanifolds} also holds, in the sense that for all $r,s\in\mathds N\cup\{0\}$, the $4$-manifolds $\#^r\C P^2\#^s \overline{\C P^2}$ and $\#^r (S^2\times S^2)$ admit (smooth structures that support) metrics with $\scal>0$; in fact $\Ric>0$, see \cite{sha-yang,perelman}.
\end{remark}

\subsection{Integral formulas}\label{subsec:integrals}
According to the Chern--Gauss--Bonnet formula and the Hirzebruch signature formula, the Euler characteristic and signature of a closed oriented Riemannian $4$-manifold $(M^4,\g)$ can be respectively expressed as
\begin{equation}\label{eq:integrals}
\chi(M)=\frac{1}{\pi^2}\int_M \underline{\chi}(R) \,\vol_\g, \quad \text{ and } \quad \sigma(M)=\frac{1}{\pi^2}\int_M \underline{\sigma}(R)\,\vol_\g,
\end{equation}
where $\underline{\chi}(R)$ and $\underline{\sigma}(R)$ are quadratic forms on its curvature operator $R$, given by
\begin{align}
\underline{\chi}(R) &=\tfrac{1}{8}\left( 6u^2 + |W_+|^2 +|W_-|^2-2|C|^2\right), \label{eq:chi}\\
\underline{\sigma}(R) &=\tfrac{1}{12}\left( |W_+|^2 -|W_-|^2\right), \label{eq:sigma}
\end{align}
if $R$ is written in the canonical form~\eqref{eq:curvop4}, see e.g.~\cite[p.~161]{Besse}. We denote by $|A|$ the Hilbert--Schmidt norm of $A\in\mathrm{Mat}_{n\times n}(\R)$, i.e., $|A|^2=\sum_{i,j} a_{ij}^2$. In particular,
\begin{equation*}
|W_\pm|^2=|\vec w_\pm|^2=(w_1^\pm)^2+(w_2^\pm)^2+(w_3^\pm)^2,
\end{equation*}
is the usual Euclidean norm of the vectors $\vec w_\pm$ as in \eqref{eq:widef}. Since all terms in \eqref{eq:chi} and \eqref{eq:sigma} are of degree $2$, it follows that $\underline{\sigma}(-R)=\underline{\sigma}(R)$ and $\underline{\chi}(-R)=\underline{\chi}(R)$. Moreover, reversing the orientation changes the sign of $\underline{\sigma}$ but leaves $\underline{\chi}$ invariant.

\begin{remark}\label{rem:hitchin-thorpe}
If $R$ is Einstein, i.e., $C=0$, then $2\underline{\chi}(R)-3\underline{\sigma}(R)=\frac{3}{2}u^2+\frac{1}{2}|W_-|^2\geq0$ by \eqref{eq:chi} and \eqref{eq:sigma}. Therefore, any closed oriented Einstein $4$-manifold $(M^4,\g)$ satisfies the \emph{Hitchin--Thorpe inequality} $|\sigma(M)|\leq \frac{2}{3}\chi(M)$, see e.g.~\cite[Thm.~6.35]{Besse}.
\end{remark}

If $(M^4,\g)$ is a noncompact negatively $\delta$-pinched $4$-manifold with \emph{finite volume}, then the integrals \eqref{eq:integrals} converge. Such manifolds have \emph{finite topological type}; i.e., are diffeomorphic to the interior of a compact manifold with boundary, see e.g.~\cite[Thm.~10.5]{bgs}. In particular, $\chi(M)$ is well-defined. Moreover, $(M^4,\g)$ has \emph{bounded geometry} in the sense of Cheeger and Gromov \cite{cheeger-gromov1,cheeger-gromov2}, since its universal cover has infinite injectivity radius. Thus, by \cite[Thm.~3.1 (3), Thm. 6.1]{cheeger-gromov1}, both equalities in \eqref{eq:integrals} hold, where $\sigma(M)$ is to be understood as the $L^2$-signature $\sigma_{(2)}(M^4,\g)$, which is a proper homotopy invariant.

We shall also need the following consequence of \cite[Thm~A]{CGY03}.

\begin{lemma}[Chang--Gursky--Yang]\label{lemma:CGY}
If $(M^4,\g)$ is a closed oriented $4$-manifold with $\sec>0$, then either $M$ is diffeomorphic to $S^4$, or
\begin{equation*}
\chi(M)\leq \frac{1}{4\pi^2} \int_M  |W_+|^2 + |W_-|^2\,\vol_\g.
\end{equation*}
\end{lemma}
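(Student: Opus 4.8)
\textbf{Proof proposal for Lemma~\ref{lemma:CGY}.}
The plan is to derive the stated inequality directly from the conformally invariant integral estimate of Chang--Gursky--Yang in~\cite{CGY03}, using the sign hypothesis $\sec>0$ to control the scalar and Ricci terms that appear there. Recall from~\cite[Thm~A]{CGY03} that, on a closed oriented $4$-manifold, if the Yamabe invariant is positive and a certain conformally invariant quantity built from the Weyl tensor and the traceless Ricci tensor is nonnegative (with strict inequality somewhere), then $M$ is diffeomorphic to $S^4$; otherwise one obtains a comparison between $\chi(M)$ and the conformally invariant energy $\frac{1}{4\pi^2}\int_M |W|^2\,\vol_\g$. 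Concretely, after expanding $\underline{\chi}(R)$ via~\eqref{eq:chi} and the Gauss--Bonnet formula, the statement is equivalent to controlling $\frac{1}{4\pi^2}\int_M\left(6u^2-2|C|^2\right)\vol_\g$ --- i.e.\ the contribution of the scalar and traceless Ricci parts of $R$ --- and showing it is $\le 0$, unless $M\cong S^4$.

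The key steps, in order, are as follows. First, I would record that $\sec>0$ forces the Yamabe invariant (equivalently, $\mathrm{scal}$ after a conformal change) to be positive, so the hypotheses of~\cite[Thm~A]{CGY03} are in force; in particular $(M,\g)$ admits a conformal metric of positive scalar curvature, and the relevant conformally invariant functional is well-defined and coincides with $\frac{1}{4\pi^2}\int_M|W_+|^2+|W_-|^2\,\vol_\g$ since $|W|^2=|W_+|^2+|W_-|^2$ is a pointwise conformal density of the right weight. Second, I would invoke the dichotomy in~\cite[Thm~A]{CGY03}: either the sign condition on the conformal invariant fails, which is exactly the case yielding $\chi(M)\le\frac{1}{4\pi^2}\int_M|W|^2\,\vol_\g$, or it holds and the diffeomorphism classification gives $M\cong S^4$. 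Third, I would translate the Chern--Gauss--Bonnet normalization in~\cite{CGY03} into the conventions of~\eqref{eq:integrals}--\eqref{eq:chi} here, checking that the constants $\frac{1}{\pi^2}$ and $\frac18$ match up so that the inequality comes out precisely as stated with the coefficient $\frac{1}{4\pi^2}$ on the Weyl energy.

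The main obstacle I anticipate is \emph{bookkeeping with normalizations and sign conventions}: the curvature-operator decomposition~\eqref{eq:decomp}, the factor of $\frac18$ in~\eqref{eq:chi}, and the Hilbert--Schmidt norm on $W_\pm$ used here must be reconciled with the tensor norms and Gauss--Bonnet integrand used in~\cite{CGY03}, where the statement is phrased in terms of $\int|W|^2$ with a possibly different constant (e.g.\ depending on whether $|W|^2$ is the operator norm on $\wedge^2$ or the full $(0,4)$-tensor norm, which differ by a factor of $4$). Getting this factor right is what pins down the coefficient $\frac{1}{4\pi^2}$; once the normalizations are aligned, the proof is a one-line appeal to~\cite[Thm~A]{CGY03}. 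A secondary point to verify is that the exceptional case in~\cite{CGY03} is stated as a \emph{diffeomorphism} (not merely homeomorphism) to $S^4$, which it is, so the conclusion here is as strong as claimed.
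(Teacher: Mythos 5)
Your proposal follows exactly the paper's route: observe that $\sec>0$ gives $\scal>0$, hence the Yamabe invariant is positive (the paper fills this in by solving the Yamabe problem and applying the maximum principle to the conformal factor at its maximum to show $Y(M,\g)=c\Vol(M,\g')^{1/2}>0$), and then apply~\cite[Thm~A]{CGY03} after reconciling their full $(0,4)$-tensor norm of $W$ with the paper's operator norm on $\wedge^2$ — a factor of $4$, as recorded in~\cite[Rmk~2]{CGY03}. One small algebra slip in your motivating paragraph: via \eqref{eq:chi} and \eqref{eq:integrals}, the target inequality $\chi(M)\leq\frac{1}{4\pi^2}\int_M|W_+|^2+|W_-|^2\,\vol_\g$ is equivalent to $\int_M\bigl(6u^2-|W_+|^2-|W_-|^2-2|C|^2\bigr)\vol_\g\leq 0$, not to $\int_M\bigl(6u^2-2|C|^2\bigr)\vol_\g\leq 0$ — the latter would yield the strictly stronger (and generally false) bound with $\frac{1}{8\pi^2}$. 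This does not affect your actual proof, which is the correct one-line appeal to~\cite[Thm~A]{CGY03}.
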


\begin{proof}
To apply \cite[Thm~A]{CGY03}, we need to show that the Yamabe invariant $Y(M,\g)$ is positive. By the solution of the Yamabe problem (see \cite[Thms~5.11, 5.30]{Aubin}), there exists a positive function $\varphi\in C^\infty(M)$ such that the metric $\g'=\varphi^2 \g$ has constant scalar curvature $c$, and such that the infimum in the definition of $Y(M,\g)$ is achieved at $\g'$. Thus $Y(M,\g)=c\Vol(M,\g')^{1/2}$. 
Let $p\in M$ be the global maximum of the conformal factor $\varphi$, so that $\Delta \varphi (p) \geq 0$. 
Since $6\Delta\varphi +\scal_\g \varphi=c\,\varphi^3$, we have that $c>0$, so $Y(M,\g)>0$.
The conclusion now follows from~\cite[Thm A]{CGY03}, keeping in mind the different norm conventions, see \cite[Rmk~2]{CGY03}.
\end{proof}

\subsection{Sphere theorem and volume bound}
Finally, for the reader's convenience, we now explain how to use standard techniques in Comparison Geometry to prove:

\begin{lemma}\label{lemma:vol}
Let $(M^4,\g)$ be a $4$-manifold with $\sec\geq\delta>0$. If $M$ is not homeomorphic to $S^4$, then $\Vol(M,\g)\leq \frac{4\pi^2}{3\delta^2}$.
\end{lemma}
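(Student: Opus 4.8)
The plan is to combine the classical Rauch–Berger comparison machinery with the $1/4$-pinched Sphere Theorem in the following way. Suppose $(M^4,\g)$ has $\sec\geq\delta>0$ and is not homeomorphic to $S^4$. After rescaling, assume $\delta\leq\sec$ with $\delta$ fixed; by Bonnet–Myers, $M$ is compact and $\diam(M,\g)\leq \pi/\sqrt\delta$. If the diameter were strictly smaller than $\tfrac12\cdot\tfrac{\pi}{\sqrt\delta}$ in a suitable normalized sense, one could invoke the pinching consequences; instead, the cleanest route is: either the injectivity radius is large enough to force $M\cong S^4$ via a Sphere Theorem, or the diameter is so small that a single metric ball covers $M$, giving a volume bound directly from Bishop–Gromov.

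\textbf{Step 1: dichotomy via diameter.} First I would show that if $\diam(M,\g)\le \tfrac{\pi}{2\sqrt\delta}$, then $M$ is homeomorphic to $S^4$. This is the diameter sphere theorem of Grove–Shiohama: a complete manifold with $\sec\geq\delta>0$ and $\diam>\tfrac{\pi}{2\sqrt\delta}$ is homeomorphic to a sphere; contrapositively, since $M\not\cong S^4$, we must have $\diam(M,\g)\le\tfrac{\pi}{2\sqrt\delta}$. (One has to be slightly careful with the borderline equality case, where the rigidity results of Gromoll–Grove apply and force either a sphere or a projective-space-like metric, but in dimension $4$ the only alternative, $\mathbb{R}P^4$, is non-orientable while the statement is about general $4$-manifolds not homeomorphic to $S^4$; if $M\cong \mathbb{R}P^4$ one still gets a volume bound, so the equality case causes no real trouble — I would treat $\diam\le\tfrac{\pi}{2\sqrt\delta}$ as the working hypothesis.)

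\textbf{Step 2: volume bound from Bishop–Gromov.} Fix any $p\in M$. Since $\diam(M,\g)\le \tfrac{\pi}{2\sqrt\delta}$, the ball $B(p,\tfrac{\pi}{2\sqrt\delta})$ equals all of $M$. By Bishop volume comparison, using $\Ric\geq 3\delta\,\g$ (from $\sec\geq\delta$ in dimension $4$),
\[
\Vol(M,\g)=\Vol\!\big(B(p,\tfrac{\pi}{2\sqrt\delta})\big)\le \Vol\!\big(B_{S^4_\delta}(\tfrac{\pi}{2\sqrt\delta})\big),
\]
where $S^4_\delta$ is the round $4$-sphere of constant curvature $\delta$, i.e.\ of radius $1/\sqrt\delta$. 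A ball of radius $\tfrac{\pi}{2\sqrt\delta}$ in $S^4_\delta$ is exactly a hemisphere, whose volume is $\tfrac12\Vol(S^4_\delta)=\tfrac12\cdot\tfrac{1}{\delta^2}\Vol(S^4)=\tfrac12\cdot\tfrac{1}{\delta^2}\cdot\tfrac{8\pi^2}{3}=\tfrac{4\pi^2}{3\delta^2}$. This gives $\Vol(M,\g)\le\tfrac{4\pi^2}{3\delta^2}$, as claimed.

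\textbf{Main obstacle.} The only genuinely nontrivial ingredient is Step 1, the diameter sphere theorem (Grove–Shiohama) together with its rigidity refinement in the equality case; everything in Step 2 is a routine application of Bishop–Gromov plus the computation of the hemisphere volume of $S^4_\delta$. I should double-check the normalization: $\Vol(S^4)=\tfrac{8\pi^2}{3}$ (surface area of the unit $4$-sphere), scaling the metric by $1/\sqrt\delta$ multiplies $4$-dimensional volume by $\delta^{-2}$, and the radius-$\tfrac{\pi}{2\sqrt\delta}$ ball in the sphere of radius $1/\sqrt\delta$ subtends geodesic-polar angle $\pi/2$, hence is precisely a hemisphere — so the factor $\tfrac12$ and the final constant $\tfrac{4\pi^2}{3\delta^2}$ are correct. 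An alternative to invoking Grove–Shiohama, if one prefers to stay within what has been cited, is to use the (stronger) hypothesis available in our applications, namely an actual two-sided pinching $\delta\le\sec\le 1$, and apply the Berger/Klingenberg injectivity radius estimate to bound $\diam$; but since the lemma is stated only with the lower bound $\sec\ge\delta$, the Grove–Shiohama diameter theorem is the natural tool and I would cite it directly.
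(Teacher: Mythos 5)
Your proof is correct and matches the paper's argument exactly: Grove--Shiohama forces $\diam(M,\g)\leq\frac{\pi}{2\sqrt\delta}$, and Bishop volume comparison then bounds $\Vol(M,\g)$ by the volume of a hemisphere in $S^4\big(\frac{1}{\sqrt\delta}\big)$. (The opening sentence of your Step 1 states the implication backwards, but you immediately correct it in the next sentence, and the digression on the rigidity/equality case is unnecessary since the nonstrict inequality $\diam\leq\frac{\pi}{2\sqrt\delta}$ is all that is needed.)
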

\begin{proof}
Since $M$ is not homeomorphic to $S^4$, the Grove--Shiohama Diameter Sphere Theorem \cite{GS77} implies that $\operatorname{diam}(M)\leq \frac12\operatorname{diam}\!\big(S^4\big(\frac{1}{\sqrt{\delta}}\big)\big)=\frac{\pi}{2\sqrt{\delta}}$. In particular, choosing any $p\in M$, we have $M=B\big(p,\frac{\pi}{2\sqrt{\delta}}\big)$. Since $\sec\geq\delta$, the Ricci curvature of $(M^4,\g)$ is at least that of the sphere $S^4\big(\frac{1}{\sqrt{\delta}}\big)$, and hence, by the Bishop Volume Comparison Theorem (see e.g.~\cite[Chap.~0.H]{Besse}), the volume of the ball $B\big(p, \frac{\pi}{2\sqrt{\delta}}\big)$ is at most the volume of a hemisphere in $S^4\big(\frac{1}{\sqrt{\delta}}\big)$. In conclusion:
\begin{equation*}
\Vol(M,\g)=\Vol\!\big(B\big(p,\tfrac{\pi}{2\sqrt{\delta}}\big)\big) \leq \tfrac{1}{2}\Vol\!\big(S^4\big(\tfrac{1}{\sqrt{\delta}}\big) \big)=\tfrac{1}{2\delta^2}\Vol\!\big(S^4(1)\big) 
=\tfrac{4\pi^2}{3\delta^2}.\qedhere
\end{equation*}
\end{proof}

\section{Preliminaries on Optimization}\label{sec:optimization}

This short section discusses an elementary (yet quite useful) approach to optimize quadratic forms on compact convex sets, particularly polytopes and simplices.

Let $Q\colon \R^n\to\R$ be a polynomial function of degree $2$. We say $Q$ is \emph{positive-} or \emph{negative-(semi)definite} if its Hessian matrix is positive- or negative-(semi)definite, and \emph{indefinite} otherwise. It is noteworthy that even though the quadratic program
\begin{align*}
&\text{maximize } Q(x), \\
&\text{subject to } A\,x+b\preceq 0,
\end{align*}
where $A\in\textnormal{Mat}_{n\times n}(\R)$ and $b\in\R^n$, can be solved in polynomial time if $Q$ is negative-semidefinite \cite{kozlov}, it becomes NP-hard if the Hessian of $Q$ is allowed to have positive eigenvalues~\cite{sahni,qp-np-hard}.

A \emph{face} of a closed convex set $K\subset\R^n$ is a convex subset $F\subset K$ such that, whenever $\frac{x+y}{2}\in F$ for some $x,y\in K$, then both $x,y\in F$.
Given a closed convex set $K\subset\R^n$, let $\mathcal F(K)=\{F\subset K : F \text{ is a face of } K\}$. Then
\begin{equation*}
K=\bigsqcup_{F\in\mathcal F(K)}\textnormal{relint}(F),
\end{equation*}
where $\textnormal{relint}(F)$ is the \emph{relative interior} of $F$, i.e., the interior of $F$ inside its \emph{affine hull} $\aff(F)$, which is the smallest affine subspace of $\R^n$ containing $F$. The dimension of a face $F$ is defined as the dimension of its supporting affine space~$\aff(F)$.

\begin{proposition}\label{lem:negdefface}
 Let $Q\colon \R^n\to\R$ be a polynomial function of degree $2$, and let $K\subset\R^n$ be a compact convex set. There is a face $F$ of $K$ such that $Q'=Q|_{\aff(F)}$ is negative-definite and a point $x_0\in\textnormal{relint}(F)$ such that $Q(x_0)=\max\limits_{x\in K}Q(x)$. Furthermore, $x_0$ is the unique point in $\aff(F)$ such that $\nabla Q'(x_0)=0$.
\end{proposition}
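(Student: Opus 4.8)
The statement has two parts: (1) the maximum of $Q$ over the compact convex set $K$ is attained at a point $x_0$ lying in the relative interior of some face $F$ such that $Q|_{\aff(F)}$ is negative-definite; and (2) $x_0$ is the unique critical point of $Q' = Q|_{\aff(F)}$ in $\aff(F)$. I would prove this by induction on $\dim K$, peeling off the boundary whenever the restriction of $Q$ to the ambient affine hull fails to be negative-definite.

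\textbf{Setup and base of the induction.} First, since $K$ is compact and $Q$ continuous, the maximum $m = \max_{x\in K} Q(x)$ is attained; let $x_0$ be a maximizer. Let $F$ be the unique face of $K$ with $x_0 \in \operatorname{relint}(F)$, using the decomposition $K = \bigsqcup_{F\in\mathcal F(K)}\operatorname{relint}(F)$ recalled just before the statement. Replacing $K$ by $F$ and $\R^n$ by $\aff(F)$, and noting that $x_0$ is still a maximizer of $Q$ over $F$ and now lies in the (relative) interior of $F$, I may assume from the start that $x_0$ is an interior point of $K$ and $\dim \aff(K) = n$; the goal becomes to show $Q$ itself is negative-definite on $\R^n$ (after this reduction) and that $x_0$ is its unique critical point — unless one must pass to a smaller face, which is where the induction enters. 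Concretely: if $\dim K = 0$ the claim is trivial ($F = K = \{x_0\}$, and "negative-definite on a point" is vacuous).

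\textbf{Inductive step.} Suppose $x_0$ is an interior maximizer of $Q$ on an $n$-dimensional $K$. Since $x_0$ is interior, $\nabla Q(x_0) = 0$ and the Hessian $H$ of $Q$ (a constant symmetric matrix) is negative-semidefinite. If $H$ is negative-definite, we are done: $F = K$ works, $Q$ is negative-definite on $\aff(K) = \R^n$, and a negative-definite quadratic has exactly one critical point, namely $x_0$, establishing part (2). If $H$ is only negative-semidefinite with nontrivial kernel, pick $0 \neq v \in \ker H$. Then $t \mapsto Q(x_0 + tv)$ is an affine function of $t$ (the quadratic term vanishes), and since $\nabla Q(x_0) = 0$ its linear coefficient also vanishes, so $Q$ is constant equal to $m$ along the entire line $x_0 + \R v$. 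Since $K$ is compact, this line exits $K$; moving along it until hitting $\partial K$ produces a boundary point $x_1$ with $Q(x_1) = m$, hence $x_1$ is also a maximizer, lying in $\operatorname{relint}(F')$ for some proper face $F'$ with $\dim F' < n$. Apply the induction hypothesis to $Q|_{\aff(F')}$ and the compact convex set $F'$: we obtain a face $F$ of $F'$ — hence a face of $K$, since a face of a face is a face — on which $Q|_{\aff(F)}$ is negative-definite, together with a maximizer $x_0 \in \operatorname{relint}(F)$ which is the unique critical point of $Q|_{\aff(F)}$. This completes the induction.

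\textbf{Where the difficulty lies.} The only genuinely delicate points are bookkeeping rather than deep: verifying that a face of a face of $K$ is again a face of $K$ (immediate from the definition of face given in the excerpt), and confirming that the line-segment argument does produce a point on a \emph{strictly lower-dimensional} face so the induction decreases the parameter. The latter is clear because $x_1 \in \partial K$ relative to $\aff(K)$ lies in no face of full dimension $n$ (the only face with $\aff(F) = \aff(K)$ is $K$ itself, whose relative interior is disjoint from $\partial K$). Uniqueness of the critical point in part (2) is then just the standard fact that a negative-definite quadratic form on an affine space has a single critical point — its unique maximizer — which is automatically the $x_0$ we produced, since any critical point of $Q|_{\aff(F)}$ is a maximizer and $x_0\in\operatorname{relint}(F)$ realizes the maximum of $Q$ over all of $K$.
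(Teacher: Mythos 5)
Your proof is correct and takes essentially the same approach as the paper's: pick a maximizer, locate the face whose relative interior contains it, and if the restriction of $Q$ to that face's affine hull is not negative-definite, use a line through the maximizer along which $Q$ is weakly convex to descend to a strictly smaller-dimensional face, then conclude by uniqueness of the critical point for a negative-definite quadratic. The only cosmetic difference is that the paper phrases the descent step directly (``there is a line on which $Q'$ is convex, so the max is also attained on the relative boundary'') without first establishing $\nabla Q(x_0)=0$ and $H\preceq 0$, whereas you make the descent explicit as an induction on $\dim K$ and use a kernel vector of the Hessian; these are the same argument.
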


\begin{proof}
 Let $x_0\in K$ be such that $Q(x_0)=\max\limits_{x\in K}Q(x)$ and $F\in\mathcal F(K)$ the face whose relative interior contains $x_0$. We clearly have $Q'(x_0)=\max\limits_{x\in F}Q'(x)$. If $Q'$ is not negative-definite, then there is an affine line $L\subset\textnormal{aff}(F)$ through $x_0$, such that the restriction of $Q'$ to $L$ is a convex function. So there exists $x_1$ in the relative boundary of $F$, and hence in a face of smaller dimension, with $Q(x_1)\geq Q(x_0)$. This implies the first claim. Furthermore, since $Q'$ is negative-definite, it has a unique critical point, which is the global maximum of $Q'$. Since $x_0$ is in the relative interior of $F$, it must be the global maximum of $Q'$.
\end{proof}

Note that if $F=\{x_0\}$ is a singleton, then $\aff(F)=\textnormal{relint}(F)=F$, the restriction of $Q|_{\aff(F)}$ is simultaneously negative- and positive-definite, and $\nabla Q|_{\aff(F)}(x_0)=0$.

\begin{corollary}\label{cor:smallsig}
Let $Q\colon \R^n\to\R$ be a polynomial function of degree $2$ whose Hessian matrix has $d$ eigenvalues in $(-\infty,0)$, and $K\subset\R^n$ be a compact convex set. Then
 $$\max_{x\in K}Q(x)=\max_{\substack{x\in \textnormal{relint}(F),\\ F\in \mathcal F(K), \, \dim F\leq d}}Q(x).$$
\end{corollary}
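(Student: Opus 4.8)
The plan is to deduce \Cref{cor:smallsig} from \Cref{lem:negdefface} essentially by iterating the structure of that proposition. First I would fix a compact convex set $K\subset\R^n$ and a degree-$2$ polynomial $Q$ whose Hessian has exactly $d$ negative eigenvalues. One inequality is immediate: since every point of $\textnormal{relint}(F)$ with $F\in\mathcal F(K)$ lies in $K$, the right-hand maximum is $\leq\max_{x\in K}Q(x)$. So the content is the reverse inequality, namely that the global maximum is already attained at a relative-interior point of some face of dimension at most $d$.

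For the reverse inequality I would invoke \Cref{lem:negdefface} to obtain a face $F$ of $K$ and a point $x_0\in\textnormal{relint}(F)$ with $Q(x_0)=\max_{x\in K}Q(x)$ such that $Q'=Q|_{\aff(F)}$ is negative-definite. The key observation is then a linear-algebra fact: if $Q'$ is the restriction of $Q$ to an affine subspace $\aff(F)$ of dimension $m$, and $Q'$ is negative-definite, then $m\leq d$. Indeed, the Hessian of $Q'$ is the compression of the Hessian $H$ of $Q$ to the $m$-dimensional linear subspace parallel to $\aff(F)$; if this compression is negative-definite, then $H$ is negative-definite on an $m$-dimensional subspace, so by the min-max (Courant--Fischer) characterization of eigenvalues $H$ has at least $m$ negative eigenvalues, whence $m\leq d$. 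Therefore $\dim F=m\leq d$, and $x_0\in\textnormal{relint}(F)$ witnesses that the right-hand maximum equals $\max_{x\in K}Q(x)$.

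Combining the two inequalities gives the claimed equality. I expect the only real subtlety to be the interface-of-quadratic-forms step — correctly identifying the Hessian of $Q|_{\aff(F)}$ with the compression of the Hessian of $Q$ to the direction space of $\aff(F)$ (which holds because passing to an affine, rather than linear, subspace only shifts the linear part of $Q'$ and does not alter its second-order term), and then applying the subspace version of the eigenvalue interlacing/min-max inequality. Everything else is a direct appeal to \Cref{lem:negdefface}. One should also note the degenerate case $d=0$: then $Q$ has no negative Hessian eigenvalues, $Q'$ negative-definite forces $\dim F=0$, i.e. $F$ is a vertex, and the maximum of $Q$ over $K$ is attained at an extreme point — consistent with the statement, and with the remark preceding the corollary about singleton faces.
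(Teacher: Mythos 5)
Your argument is correct and follows the same route as the paper: both invoke \Cref{lem:negdefface} to obtain a face $F$ and a maximizer $x_0\in\textnormal{relint}(F)$ with $Q|_{\aff(F)}$ negative-definite, and then conclude $\dim F\leq d$. The only difference is that you spell out the final step (via the min-max characterization of eigenvalues applied to the compression of the Hessian) which the paper states without proof.
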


\begin{proof}
 By Proposition~\ref{lem:negdefface} there exists a face $F$ of $K$ such that $Q|_{\textnormal{aff}(F)}$ is negative-definite and $x_0\in\textnormal{relint}(F)$ with $Q(x_0)=\max\limits_{x\in K}Q(x)$. Negative-definiteness of $Q|_{\textnormal{aff}(F)}$ implies that $\dim(F)\leq d$.
\end{proof}

\subsection{Optimizing quadratic polynomials on simplices}\label{subsec:optsimplex}
Suppose $\Delta^k\subset\R^n$ is a $k$-simplex with vertices $V=\{v_1,\dots, v_{k+1}\}$, i.e., $V$ is affinely independent and $\Delta^k=\conv(V)$, and $Q\colon \R^n\to\R$ is a polynomial function of degree~$2$. Recall that 
\begin{equation*}
\conv(S):=\left\{\textstyle\sum\limits_{j=1}^N x_j s_j \, :\, s_j \in S, \, x_j\geq 0,\, \sum\limits_{j=1}^N x_j=1,\, N\in\mathds N \right\}
\end{equation*}
denotes the \emph{convex hull} of a set $S\subset \R^n$.
Note that Proposition~\ref{lem:negdefface} gives a method to compute $\max\limits_{x\in \Delta^k}Q(x)$ that only involves Linear Algebra, and Corollary~\ref{cor:smallsig} renders this computation significantly easier if $d$ is small.

By Proposition~\ref{lem:negdefface}, the maximum of $Q$ on $\Delta^k$ is attained in the relative interior of a face on whose affine hull $Q$ is negative-definite. Since $\Delta^k$ is a simplex, any subset $S\subset V$ is affinely independent, its convex hull $\conv(S)$ is a face of $\Delta^k$, and every face of $\Delta^k$ is of this form. Thus, in order to find $\max\limits_{x\in \Delta^k}Q(x)$, first compute 
\begin{equation}\label{eq:verticesnegdef}
\mathcal{S}=\big\{S\subset V:\, Q|_{\aff(S)} \textrm{ is negative-definite}\big\}.
\end{equation}
Then, for each $S\in\mathcal{S}$, compute the unique point $x_S\in\aff(S)$ with $\nabla Q|_{\aff(S)}(x_S)=0$ and consider the set 
\begin{equation}\label{eq:verticesnegdefint}
\mathcal{S}'=\big\{S\in\mathcal{S}:\, x_S\in\textnormal{relint}(\conv(S))\big\}.
\end{equation}
Finally, Proposition~\ref{lem:negdefface} implies that
\begin{equation}\label{eq:max-max}
\max_{x\in \Delta^k}Q(x)=\max_{S\in\mathcal{S}'}Q(x_S).
\end{equation}

\section{Pinched curvature operators}\label{sec:simplex}

In this section, we establish foundational results on convex sets of $\delta$-pinched curvature operators that are
needed throughout the rest of the paper.

\subsection{Projection onto Einstein curvature operators}
Using the same notation in the decomposition \eqref{eq:decomp} of $\Sym^2(\wedge^2\R^4)$, define the sets
\begin{equation}\label{eq:dpinched}
\begin{aligned}
    \dpinched &:=\left\{R\in\Sym^2_b(\wedge^2\R^4) : R \text{ is positively } \delta\text{-pinched} \right\},\\
    \epinched &:=\dpinched\cap (\mathcal U\oplus\mathcal W),
\end{aligned}
\end{equation}
and consider the orthogonal projection onto Einstein (modified) curvature operators
\begin{equation}\label{eq:pr}
\begin{aligned}
\pr\colon \Sym^2(\wedge^4\R^4)&\longrightarrow\mathcal U \oplus\mathcal W\oplus\wedge^4 \R^4 \\
\pr(R)&=R-R_{\mathcal L}.
\end{aligned}
\end{equation}

\begin{lemma}\label{lem:projecteinsteinpsd}
If $R\in\Sym^2(\wedge^2\R^4)$ satisfies $R\succeq0$, then also $\pr(R)\succeq0$.
\end{lemma}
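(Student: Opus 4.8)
We need to show that projecting away the traceless-Ricci part $R_{\mathcal L}$ of a positive-semidefinite modified curvature operator preserves positive-semidefiniteness. The key observation is that the $\O(4)$-decomposition \eqref{eq:decomp} is orthogonal and $\SO(4)$-equivariant, and $R_{\mathcal L} = \frac12\, \g\owedge(\Ric - \frac14\scal\, \g)$, i.e., the component in the $9$-dimensional module $\mathcal L$, corresponds precisely to the off-diagonal blocks $C$ and $C^{\mathrm t}$ in the canonical form \eqref{eq:curvop4} — while $R_{\mathcal U}$, $R_{\mathcal W_\pm}$, and $R_{\wedge^4}$ are block-diagonal with respect to the splitting \eqref{eq:selfdualdec}. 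Thus $\pr(R)$ is obtained from $R$ by zeroing out the off-diagonal blocks in a suitable orthonormal basis adapted to $\wedge^2\R^4 = \wedge^2_+\R^4 \oplus \wedge^2_-\R^4$.

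**Main step.** I would argue as follows. Fix an orthogonal decomposition $\wedge^2\R^4 = V_+ \oplus V_-$ with $V_\pm = \wedge^2_\pm\R^4$. Writing $R$ in block form with respect to this splitting,
\begin{equation*}
R = \begin{pmatrix} A & B^{\mathrm t} \\ B & D \end{pmatrix},
\end{equation*}
the claim is that $\pr(R) = \begin{pmatrix} A & 0 \\ 0 & D \end{pmatrix}$. To see this, recall that the Hodge star $*$ acts as $+\id$ on $V_+$ and $-\id$ on $V_-$, and conjugation by $*$ is an $\O(4)$-equivariant involution of $\Sym^2(\wedge^2\R^4)$ whose $(+1)$-eigenspace is $\mathcal U \oplus \mathcal W \oplus \wedge^4\R^4$ (the block-diagonal operators) and whose $(-1)$-eigenspace is exactly $\mathcal L$ (the block-off-diagonal operators); this is consistent with the canonical form \eqref{eq:curvop4}, where $R_{\mathcal L}$ is identified with the blocks $C, C^{\mathrm t}$. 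Hence $\pr(R) = \frac12(R + *R*)$. Now if $R \succeq 0$, then since $*$ is orthogonal, $*R* \succeq 0$ as well, and therefore $\pr(R) = \frac12(R + *R*) \succeq 0$ as a sum of two positive-semidefinite operators. This completes the proof.

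**Expected obstacle.** The only real content is the identification $\pr(R) = \frac12(R + *R*)$, i.e., that the orthogonal projection onto the traceless-Ricci module $\mathcal L$ is $R \mapsto \frac12(R - *R*)$. This is standard (it follows from the description of $\mathcal L$ as the block-off-diagonal part, or from the fact that $\mathcal L$ is the unique summand in \eqref{eq:decomp} on which the orientation-reversing involution of \eqref{eq:selfdualdec} — which transposes $C$ — and the Hodge conjugation interact as stated), but it should be spelled out by comparing with the canonical form \eqref{eq:curvop4}: there $R_{\mathcal L}$ is the block $\left(\begin{smallmatrix} 0 & C^{\mathrm t} \\ C & 0\end{smallmatrix}\right)$, which is visibly the $(-1)$-eigenspace of conjugation by $*$, while all other summands are block-diagonal. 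Once this is in place, the averaging trick $\pr(R) = \frac12(R + *R*)$ makes the positivity statement immediate, so no delicate estimate is needed.
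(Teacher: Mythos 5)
Your proof is correct and essentially the same as the paper's: both identify $\pr(R)$ as the block-diagonal compression of $R$ with respect to $\wedge^2_+\R^4\oplus\wedge^2_-\R^4$ and conclude $\pr(R)\succeq0$ by positivity of compressions/conjugations. Your formula $\pr(R)=\tfrac12(R+*R*)$ and the paper's $\pr(R)=\pi_+R\pi_++\pi_-R\pi_-$ are algebraically identical, since $*=\pi_+-\pi_-$ and $\id=\pi_++\pi_-$.
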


\begin{proof}
Consider the decomposition $\wedge^2 \R^4=\wedge^2_+\R^4\oplus\wedge^2_-\R^4$ as in
\eqref{eq:selfdualdec}, and denote by $\pi_\pm\colon\wedge^2\R^4\to\wedge^2_\pm\R^4$ the corresponding orthogonal projections. Since $R\succeq0$, we have $\pr(R)=\pi_+\circ R\circ \pi_+ + \pi_-\circ R\circ \pi_-\succeq0.$
\end{proof}

\begin{lemma}\label{lem:projecteinstein}
Let $R\in\Sym_b^2(\wedge^2\R^4)$ be an algebraic curvature operator.
\begin{enumerate}[\rm (a)]
  \item If $\sec_R\geq0$, then $\sec_{\pr(R)}\geq0$.
  \item If $R\in\dpinched$, then $\pr(R)\in\epinched$.
 \end{enumerate}
\end{lemma}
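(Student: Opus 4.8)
The plan is to deduce both parts from the Finsler--Thorpe Trick (\Cref{lem:finslerthorpe}) together with \Cref{lem:projecteinsteinpsd}, using the fact that $\pr$ commutes with the operations appearing in Finsler--Thorpe. The crucial observation is that $\pr$ fixes the identity $\id$ and the Hodge star $*$, since $\id_{\mathcal L}=0$ and $*\in\wedge^4\R^4$, so $*_{\mathcal L}=0$. Hence for any $R$ and any scalars $s,t\in\R$ we have $\pr(R+s\,\id+t\,*)=\pr(R)+s\,\id+t\,*$. Moreover $\pr$ is linear and $\pr(R-R_{\mathcal L})=R-R_{\mathcal L}=\pr(R)$, so $\pr$ is idempotent on the relevant subspaces.

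For part (a): suppose $\sec_R\geq0$. By the Finsler--Thorpe Trick there exists $t\in\R$ with $R+t\,*\succeq0$. Applying \Cref{lem:projecteinsteinpsd} to the operator $R+t\,*\in\Sym^2(\wedge^2\R^4)$, we get $\pr(R+t\,*)\succeq0$, i.e.\ $\pr(R)+t\,*\succeq0$. Now $\pr(R)=R-R_{\mathcal L}$ lies in $\mathcal U\oplus\mathcal W\subset\Sym^2_b(\wedge^2\R^4)$, so it is again an algebraic curvature operator, and the Finsler--Thorpe Trick (read in the reverse direction) yields $\sec_{\pr(R)}\geq0$.

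For part (b): let $R\in\dpinched$, so $\delta\leq\sec_R\leq1$. Since $\sec$ is linear in $R$ and $\sec_{\id}\equiv1$, the two inequalities say $\sec_{R-\delta\id}\geq0$ and $\sec_{\id-R}\geq0$. Apply part (a) to each: $\sec_{\pr(R-\delta\id)}\geq0$ and $\sec_{\pr(\id-R)}\geq0$. Using $\pr(\id)=\id$ and linearity, $\pr(R-\delta\id)=\pr(R)-\delta\id$ and $\pr(\id-R)=\id-\pr(R)$, so $\delta\leq\sec_{\pr(R)}\leq1$, i.e.\ $\pr(R)$ is positively $\delta$-pinched. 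Since $\pr(R)=R-R_{\mathcal L}\in\mathcal U\oplus\mathcal W$ and $R_{\wedge^4}=0$ (as $R$ is algebraic, hence $\pr(R)$ stays in $\Sym^2_b$), we conclude $\pr(R)\in\dpinched\cap(\mathcal U\oplus\mathcal W)=\epinched$.

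There is essentially no obstacle here beyond bookkeeping; the only point that needs care is verifying that $\pr$ genuinely fixes $*$ and $\id$ so that it intertwines correctly with the shifted positive-semidefiniteness conditions, and that $\pr(R)$ remains in the Bianchi subspace $\Sym^2_b(\wedge^2\R^4)$ (which is clear since $\pr$ maps into $\mathcal U\oplus\mathcal W\oplus\wedge^4\R^4$ and kills nothing outside $\mathcal L$, while an algebraic curvature operator has trivial $\wedge^4$-component, a property preserved by $\pr$). Everything else is immediate from linearity of $\sec_R$ in $R$, the normalization $\sec_{\id}\equiv1$, and \Cref{lem:projecteinsteinpsd}.
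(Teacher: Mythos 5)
Your proof is correct and takes essentially the same route as the paper: both use the Finsler--Thorpe Trick to express the hypothesis $\sec_R\geq0$ as existence of a positive-semidefinite operator $R+t\,*$, apply \Cref{lem:projecteinsteinpsd} to that operator, observe that $\pr$ fixes $*$ (and $\id$) so the conclusion $\pr(R)+t\,*\succeq0$ follows, and then read Finsler--Thorpe back to conclude $\sec_{\pr(R)}\geq0$; part (b) is deduced from part (a) exactly as in the paper via linearity of $\sec_R$ in $R$ and $\sec_{\id}=1$. The only stylistic difference is that the paper phrases this in terms of an $S\succeq0$ projecting onto $R$, while you name $S=R+t\,*$ explicitly; these are the same argument.
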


\begin{proof}
By the Finsler--Thorpe Trick (Lemma~\ref{lem:finslerthorpe}), if $\sec_R\geq0$, then there exists $S\in\Sym^2(\wedge^2\R^4)$, $S\succeq0$, whose orthogonal projection on $\Sym_b^2(\wedge^2\R^4)$ is $R$.
Clearly, $\pr(R)$ is the orthogonal projection on $\Sym_b^2(\wedge^2\R^4)$  of $\pr(S)$. Thus, in order to prove (a), it suffices to show that $\pr(S)\succeq0$, which holds by Lemma~\ref{lem:projecteinsteinpsd}. 
As $\sec_R$ depends linearly on $R$, and $\sec_{\id}=1$, applying (a) to $R-\delta\id$ and $\id-R$ yields (b).
\end{proof}

\begin{proposition}\label{prop:projd}
$\pr(\dpinched)=\epinched$.
\end{proposition}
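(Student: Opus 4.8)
The plan is to prove the two inclusions $\pr(\dpinched)\subseteq\epinched$ and $\epinched\subseteq\pr(\dpinched)$ separately. The first is immediate from \Cref{lem:projecteinstein}(b): if $R\in\dpinched$ then $\pr(R)\in\epinched$. For the reverse inclusion, I would observe that $\epinched\subseteq\mathcal U\oplus\mathcal W$ consists precisely of the Einstein ($R_{\mathcal L}=0$) curvature operators in $\Sym^2_b(\wedge^2\R^4)$ that are positively $\delta$-pinched, and that for such an $R$ we have $\pr(R)=R-R_{\mathcal L}=R$, since $R_{\mathcal L}=0$. Hence every $R\in\epinched$ is its own image under $\pr$, and it is trivially the image of a $\delta$-pinched operator (namely itself), so $R\in\pr(\dpinched)$. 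Combining the two inclusions gives the equality.

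Concretely, I would write: "By \Cref{lem:projecteinstein}(b), $\pr(\dpinched)\subseteq\epinched$. Conversely, if $R\in\epinched$, then in particular $R\in\dpinched$ and $R_{\mathcal L}=0$, so $\pr(R)=R-R_{\mathcal L}=R$; thus $R=\pr(R)\in\pr(\dpinched)$. Therefore $\pr(\dpinched)=\epinched$." One subtle point to double-check before committing to this: the definition \eqref{eq:dpinched} sets $\epinched:=\dpinched\cap(\mathcal U\oplus\mathcal W)$, and since $\mathcal U\oplus\mathcal W$ is exactly the subspace of $\Sym^2_b(\wedge^2\R^4)$ on which $R_{\mathcal L}$ (and $R_{\wedge^4}$) vanish, the condition $R\in\mathcal U\oplus\mathcal W$ is literally $R_{\mathcal L}=0$ together with $R_{\wedge^4}=0$; the latter holds automatically because $R\in\Sym^2_b$. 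So the identification of $\epinched$ with $\delta$-pinched Einstein algebraic curvature operators is exact, and $\pr$ restricts to the identity on $\mathcal U\oplus\mathcal W$.

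The main (and really only) obstacle is \Cref{lem:projecteinstein}(b) itself, which supplies the nontrivial inclusion $\pr(\dpinched)\subseteq\epinched$ — but that has already been established via the Finsler--Thorpe Trick and \Cref{lem:projecteinsteinpsd}. The reverse inclusion is a formality once one notes that $\pr$ acts as the identity on the Einstein stratum. Thus the proof of \Cref{prop:projd} is essentially a bookkeeping step assembling \Cref{lem:projecteinstein}(b) with the triviality $\pr|_{\mathcal U\oplus\mathcal W}=\id$, and I would keep it to two or three lines.
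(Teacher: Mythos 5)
Your proof is correct and follows essentially the same approach as the paper: one inclusion is \Cref{lem:projecteinstein}(b), and the other follows by noting that $\pr$ fixes $\epinched\subseteq\dpinched$ pointwise. The paper condenses the reverse inclusion to a one-line remark that it is ``clear,'' whereas you spell out the bookkeeping; the mathematics is the same.
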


\begin{proof}
Lemma~\ref{lem:projecteinstein}~(b) gives one inclusion, the other is clear from Definition~\ref{def:dpinched}.
\end{proof}

\begin{remark}
The properties of the projection \eqref{eq:pr} onto Einstein curvature operators in Lemma~\ref{lem:projecteinstein} and Proposition~\ref{prop:projd} \emph{do not} hold for the projection onto locally conformally flat curvature operators. Namely, there exists $R\in\Sym_b^2(\wedge^2\R^4)$ with $\sec_R\geq0$ whose locally conformally flat part $R_{\mathcal U}+R_{\mathcal L}$ does not have $\sec\geq0$.
\end{remark}

\subsection{Einstein simplices}
Fix $0<\delta<1$, and consider the curvature operators
\begin{equation}\label{eq:Rww}
R(\vec w_+,\vec w_-,u):=\diag(u+w_i^+,u+w_i^-)\in \mathcal U\oplus\mathcal W
\end{equation}
as in \eqref{eq:curvop4}, with $C=0$, and recall that $\vec w_\pm=\big(w_1^\pm,w_2^\pm,w_3^\pm\big)$ satisfy
\begin{equation}\label{eq:ww}
 w_1^{\pm}+w_2^{\pm}+w_3^{\pm}=0, \qquad  w_1^{\pm}\leq w_2^{\pm}\leq w_3^{\pm}.
\end{equation}
By the Finsler-Thorpe Trick (Lemma~\ref{lem:finslerthorpe}), such a curvature operator $R(\vec w_+,\vec w_-,u)$ is positively $\delta$-pinched if and only if there exist $t_1,t_2\in\R$ such that \eqref{eq:t1t2bound} holds, i.e.,
\begin{equation}\label{eq:defesimp}
 \begin{aligned}
 \delta\leq w_i^{+}+u+t_1, &\qquad  w_i^{+}+u+t_2\leq1, \\
 \delta\leq w_i^{-}+u-t_1, &\qquad  w_i^{-}+u-t_2\leq1,
 \end{aligned} \qquad i=1,2,3.
\end{equation}

We first get rid of some redundant inequalities:

\begin{lemma}\label{lem:einsteinineqs}
$R(\vec w_+,\vec w_-,u)\in\epinched$ if and only if there exist $t_1,t_2\in\R$ such that
\begin{equation}\label{eq:esimpreduced}
\begin{aligned}
\delta\leq w_1^{+}+u+t_1,  &\qquad\quad\;  w_3^{+}+u+t_2\leq1, \\ 
  \delta\leq w_1^{-}+u-t_1, &\qquad\quad\;  w_3^{-}+u-t_2\leq1.
\end{aligned}
\end{equation}
\end{lemma}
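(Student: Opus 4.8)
The plan is to deduce this directly from the Finsler--Thorpe characterization \eqref{eq:defesimp} together with the ordering \eqref{eq:ww}, by simply discarding the inequalities that become redundant. First I would recall that, since $R(\vec w_+,\vec w_-,u)$ already lies in $\mathcal U\oplus\mathcal W$ by construction, membership in $\epinched=\dpinched\cap(\mathcal U\oplus\mathcal W)$ is equivalent to $R(\vec w_+,\vec w_-,u)$ being positively $\delta$-pinched, which, by the corollary to the Finsler--Thorpe Trick (i.e.\ by \eqref{eq:t1t2bound} written out in the form \eqref{eq:defesimp}), holds if and only if there exist $t_1,t_2\in\R$ satisfying all twelve inequalities of \eqref{eq:defesimp}. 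So it suffices to show that the existence of $(t_1,t_2)$ solving \eqref{eq:defesimp} is equivalent to the existence of $(t_1,t_2)$ solving the reduced system \eqref{eq:esimpreduced} — and in fact I would argue that the \emph{same} pair $(t_1,t_2)$ works in both.

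One implication is immediate: \eqref{eq:esimpreduced} is literally the subsystem of \eqref{eq:defesimp} obtained by keeping $i=1$ among the lower-bound inequalities and $i=3$ among the upper-bound inequalities, so any solution of \eqref{eq:defesimp} solves \eqref{eq:esimpreduced}. For the converse, suppose $(t_1,t_2)$ satisfies \eqref{eq:esimpreduced}. For each $i\in\{1,2,3\}$, the monotonicity $w_1^{\pm}\leq w_i^{\pm}\leq w_3^{\pm}$ from \eqref{eq:ww} gives $\delta\leq w_1^{\pm}+u\pm t_1\leq w_i^{\pm}+u\pm t_1$ and $w_i^{\pm}+u\pm t_2\leq w_3^{\pm}+u\pm t_2\leq 1$, which are precisely the inequalities of \eqref{eq:defesimp}. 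Hence $(t_1,t_2)$ solves \eqref{eq:defesimp}, completing the equivalence.

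There is essentially no obstacle in this argument; the only point requiring a moment of care is to note that the quantifiers "there exist $t_1,t_2$" in \eqref{eq:defesimp} and \eqref{eq:esimpreduced} are witnessed by the same pair, so the passage between the two systems is an honest equivalence rather than merely an implication in one direction. I would present the proof in roughly the two sentences above, after the initial reduction to \eqref{eq:defesimp}.
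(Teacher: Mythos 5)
Your proof is correct and follows essentially the same route as the paper: one direction is trivial because \eqref{eq:esimpreduced} is a subsystem of \eqref{eq:defesimp}, and the converse uses the monotonicity $w_1^{\pm}\le w_i^{\pm}\le w_3^{\pm}$ from \eqref{eq:ww} to recover the remaining inequalities with the same witnesses $(t_1,t_2)$. The only stylistic difference is that you make explicit the (worthwhile) observation that the same pair $(t_1,t_2)$ witnesses both systems, which the paper leaves implicit.
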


\begin{proof}
The inequalities in \eqref{eq:esimpreduced} are a subset of those in \eqref{eq:defesimp}, so 
 \eqref{eq:esimpreduced} obviously holds if $R(\vec w_+,\vec w_-,u)\in\epinched$. For the converse, observe that $\delta\leq w_1^{+}+u+t_1$ together with $w_1^+\leq w_i^+$ imply that $\delta\leq w_i^{+}+u+t_1$ for $i=1,2,3$. The other inequalities in \eqref{eq:defesimp} missing from \eqref{eq:esimpreduced} can be obtained in the same way, using \eqref{eq:ww}.
\end{proof}

The set of points $(\vec w_+,\vec w_-,u,t_1,t_2)\in\R^9$ that satisfy \eqref{eq:ww} and \eqref{eq:esimpreduced} is clearly an intersection of linear subspaces and affine half-spaces in $\R^9$. In order to eliminate the variables $t_1$ and $t_2$, we show it is actually a simplex and compute its vertices.

\begin{proposition}\label{prop:verticesboththorpe}
Let $\esimp^7\subset \R^7$ be the $7$-simplex defined as convex hull of the rows $v_1,\ldots, v_8$ of the matrix:
 $$\bordermatrix{
 &  w_1^{+} &  w_2^{+} & w_1^{-} & w_2^{-} & u & t_1 & t_2 \cr
v_1&\frac{2}{3}(\delta-1) &\frac{2}{3}(\delta-1) &0&0&\frac{1}{3}(2\delta+1)&\frac{1}{3}(1-\delta)&\frac{2}{3}(\delta-1)\rule[-6pt]{0pt}{10pt} \cr
v_2&\frac{4}{3}(\delta-1) &\frac{2}{3}(1-\delta) &0&0&\frac{1}{3}(\delta+2) &\frac{2}{3}(1-\delta)&\frac{1}{3}(\delta-1)  \rule[-6pt]{0pt}{10pt} \cr
v_3&0&0&\frac{2}{3}(\delta-1)&\frac{2}{3}(\delta-1)&\frac{1}{3}(2\delta+1) &\frac{1}{3}(\delta-1) &\frac{2}{3}(1-\delta) \rule[-6pt]{0pt}{10pt}
\cr
v_4&0&0&\frac{4}{3}(\delta-1) &\frac{2}{3}(1-\delta)&\frac{1}{3}(\delta+2) &\frac{2}{3}(\delta-1) &\frac{1}{3}(1-\delta) \rule[-6pt]{0pt}{10pt} \cr
v_5&0&0&0&0&1&\delta-1&0 \rule[-6pt]{0pt}{10pt} \cr
v_6&0&0&0&0&1&1-\delta&0 \rule[-6pt]{0pt}{10pt} \cr
v_7&0&0&0&0&\delta&0&\delta-1\rule[-6pt]{0pt}{10pt}  \cr
v_8&0&0&0&0&\delta&0&1-\delta }$$
and consider its image under the linear map $\iota_7\colon\R^7\to\R^9$, given by
\begin{equation*}
\iota_7\big(w_1^+,w_2^+,w_1^-,w_2^-,u,t_1,t_2\big)=\big(w_1^+,w_2^+,-w_1^+ -w_2^+,w_1^-,w_2^-,-w_1^- -w_2^-,u,t_1,t_2\big).
\end{equation*}
Then $(\vec w_+,\vec w_-,u,t_1,t_2)\in\R^9$ is in $\iota_7(\esimp^7)$ if and only if it satisfies \eqref{eq:ww} and~\eqref{eq:esimpreduced}.
\end{proposition}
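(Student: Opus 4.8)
The plan is to verify the stated equality of polyhedra by showing mutual containment, exploiting the fact that the constraints \eqref{eq:ww} and \eqref{eq:esimpreduced} cut out a bounded polyhedron in $\R^9$ whose image under the obvious projection forgetting the dependent coordinates $w_3^\pm$ lives in the $7$-dimensional affine space parametrized by $(w_1^+,w_2^+,w_1^-,w_2^-,u,t_1,t_2)$. First I would record that, after substituting $w_3^\pm=-w_1^\pm-w_2^\pm$, the system \eqref{eq:ww}--\eqref{eq:esimpreduced} becomes a finite list of affine inequalities in $\R^7$: the two chains $w_1^\pm\le w_2^\pm\le -w_1^\pm-w_2^\pm$ coming from \eqref{eq:ww}, and the four inequalities $\delta\le w_1^++u+t_1$, $\delta\le w_1^-+u-t_1$, $-w_1^+-w_2^++u+t_2\le 1$, $-w_1^--w_2^-+u-t_2\le 1$ coming from \eqref{eq:esimpreduced}. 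Call this polyhedron $P\subset\R^7$; the claim is $P=\esimp^7=\conv(v_1,\dots,v_8)$.

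For the inclusion $\esimp^7\subseteq P$, I would simply check that each of the eight rows $v_1,\dots,v_8$ satisfies all the listed inequalities; since $P$ is convex this gives $\conv(v_j)\subseteq P$. This is a routine but finite computation — eight points, each tested against roughly eight inequalities — which I would organize in a table. For the reverse inclusion $P\subseteq\esimp^7$, the cleanest route is a dimension/vertex count. A nonempty polyhedron defined by affine inequalities equals the convex hull of its vertices together with its recession cone; since the $v_j$ are (as one checks) affinely independent, $\esimp^7$ is genuinely $7$-dimensional, and it suffices to show that $P$ is bounded and that its vertices are among $\{v_1,\dots,v_8\}$. Boundedness of $P$ follows from boundedness of the $\vec w_\pm$ (forced by the chains in \eqref{eq:ww}: any vector with $w_1\le w_2\le w_3$ and $\sum w_i=0$ satisfying the pinching bounds lies in a fixed box) together with the fact that $u$ is squeezed between $\delta-(\text{bounded})$ and $1+(\text{bounded})$ from the four pinching inequalities, which in turn pins down $t_1,t_2$. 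Then I would argue that each vertex of $P$ is obtained by making $7$ of the defining inequalities tight, solving the resulting linear system, and that the only solutions lying in $P$ are the eight $v_j$; equivalently, one exhibits for each $v_j$ a set of $7$ inequalities active there with linearly independent gradients (so each $v_j$ is indeed a vertex of $P$), and checks that no other vertex can occur.

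An alternative — and in my view the likely intended — argument avoids enumerating all $\binom{8}{7}$-type subsystems: one instead eliminates $t_1$ and $t_2$ by Fourier--Motzkin, reducing \eqref{eq:esimpreduced} to inequalities purely in $(\vec w_+,\vec w_-,u)$ (namely $2(\delta-u)\le w_1^++w_1^-$ and $2(u-1)\ge -(w_1^++w_2^+)-(w_1^-+w_2^-)$, together with the original bounds that survive), identifies the resulting polytope in the $5$-dimensional $(w_1^+,w_2^+,w_1^-,w_2^-,u)$-space — this is exactly the Einstein simplex $\esimp^5$ referenced in the introduction — and then observes that the map $(t_1,t_2)$ back up is ``wedge-shaped'' so that the fiber structure produces precisely the claimed $8$ vertices from the vertices of $\esimp^5$ and the endpoints of the $t$-intervals. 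I expect the main obstacle to be purely bookkeeping: confirming that the eight rows of the matrix are exactly right (the fractions $\tfrac13(2\delta+1)$, $\tfrac23(\delta-1)$, etc.\ must be reproduced by solving the active subsystems), and making sure no spurious vertex of $P$ has been omitted — i.e.\ that the eight listed points really are \emph{all} the vertices, not merely eight among possibly more. Establishing that $\esimp^7$ is a simplex (the $v_j$ affinely independent) is itself a determinant computation but is straightforward, and it is what makes the later elimination of $t_1,t_2$ by taking faces (as in \Cref{subsec:optsimplex}) legitimate.
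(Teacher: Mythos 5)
Your Route~1 is essentially the paper's argument and is correct as far as it goes: substitute away $w_3^\pm$, cut out the polyhedron $P\subset\R^7$ by the surviving affine inequalities, check boundedness and $7$-dimensionality, and identify vertices by saturating defining constraints. What you leave open is precisely the spot you flag yourself — ruling out "spurious vertices" beyond the eight $v_j$ — and here the paper supplies a clean one-line fix that you appear to miss. After substituting the two equalities $w_3^\pm=-w_1^\pm-w_2^\pm$, the set $P$ lies in a $7$-dimensional affine space and is cut out by \emph{exactly eight} affine inequalities: the four orderings from \eqref{eq:ww} and the four pinching bounds from \eqref{eq:esimpreduced}. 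A bounded $7$-dimensional polytope needs at least eight facets, so all eight inequalities must be facet-defining, and a $d$-dimensional polytope with exactly $d+1$ facets is necessarily a $d$-simplex. Its vertices therefore correspond bijectively to the eight ways of dropping one inequality, so the enumeration you propose collapses to exactly those eight subsystems and no spurious vertex can arise. You also do not need to separately verify $\Delta_\delta^7\subseteq P$: once $P$ is known to be a $7$-simplex, computing its eight vertices and matching them to the rows of the matrix finishes the proof. Minor further differences: the paper establishes boundedness via the identification of points of $P$ with $\delta$-pinched (Einstein) curvature operators and Berger's classical entrywise bounds, whereas your direct inequality-chasing would also work; and your Route~2 (Fourier--Motzkin elimination of $t_1,t_2$ and lifting the $5$-dimensional polytope) is a genuinely different and more laborious plan — the paper in fact proceeds in the opposite direction, first proving $\Delta_\delta^7$ is a simplex and then \emph{projecting} onto $\Delta_\delta^6$ and $\Delta_\delta^5$ in \Cref{prop:einsteinonethorpe,prop:einsteinsimplex}.
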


\begin{proof}
Let $P_\delta$ be the set of $(\vec w_+,\vec w_-,u,t_1,t_2)\in\R^9$ that satisfy \eqref{eq:ww} and \eqref{eq:esimpreduced}. 
Since these are points such that $R(\vec w_+,\vec w_-,u)\in\epinched$ by Lemma~\ref{lem:einsteinineqs}, the set $P_\delta$ is bounded. Indeed, each entry $R_{ijkl}=\langle R(e_i\wedge e_j),e_k\wedge e_l \rangle$ of the matrix representing a positively $\delta$-pinched curvature operator $R\in\dpinched$, where $\{e_i\}$ is an orthonormal basis, satisfies $|R_{ijkl}|\leq 1$ by Berger's classical estimates, see e.g.~\cite{karcher-estimate}.
Thus, if $(\vec w_+,\vec w_-,u,t_1,t_2)\in P_\delta$, then $|u|$ and $|\vec w_\pm|$ are bounded, and hence so are $|t_1|$ and $|t_2|$ by \eqref{eq:esimpreduced}. 
Therefore, $P_\delta\subset \R^9$ is a polytope. Moreover, $P_\delta\subset\iota_7(\R^7)$ by \eqref{eq:ww}, so $\dim P_\delta\leq 7$. On the other hand, $\iota_7\left(\frac{\delta-1}{3},0,\frac{\delta-1}{3},0,\frac{\delta+1}{2},0,0\right)$ is in the relative interior of $P_\delta$, and hence $\dim P_\delta=7$.
Since $P_\delta$ is defined by $8$ inequalities, namely, $4$ in \eqref{eq:ww} and $4$ in \eqref{eq:esimpreduced}, it is a $7$-simplex. Its vertices are the points where $7$ of the $8$ inequalities are equalities. These are exactly the points $\iota_7(v_j)$, $1\leq j\leq 8$, where $v_j$ are the rows of the matrix above. For example, $\iota_7(v_1)$ saturates all inequalities in \eqref{eq:ww} and \eqref{eq:esimpreduced} except for $w_2^+\leq w_3^+$. Thus, $P_\delta=\iota_7(\esimp^7)$, concluding the proof.
\end{proof}

Remarkably, the $7$-simplex $\esimp^7$ in Proposition~\ref{prop:verticesboththorpe} is such that its images under projections that eliminate one or both of the variables $t_1$ and $t_2$ are also simplices. 

\begin{proposition}\label{prop:einsteinsimplex}
Let $\esimp^5\subset\R^5$ be the $5$-simplex defined as convex hull of the rows  $p_1,\ldots, p_6$ of the matrix:
 $$\bordermatrix{
	   &  w_1^{+} &  w_2^{+} & w_1^{-} & w_2^{-} & u  \cr
p_1&\frac{2}{3}(\delta-1)&\frac{2}{3}(\delta-1) &0&0&\frac{1}{3}(2\delta+1) \rule[-6pt]{0pt}{10pt} \cr
p_2&\frac{4}{3}(\delta-1) &\frac{2}{3}(1-\delta) &0&0&\frac{1}{3}(\delta+2) \rule[-6pt]{0pt}{10pt} \cr
p_3&0&0&\frac{2}{3}(\delta-1) &\frac{2}{3}(\delta-1) &\frac{1}{3}(2\delta+1)\rule[-6pt]{0pt}{10pt} 
\cr	   p_4&0&0&\frac{4}{3}(\delta-1) &\frac{2}{3}(1-\delta) &\frac{1}{3}(\delta+2) \rule[-6pt]{0pt}{10pt} \cr
p_5 &0&0&0&0&1 \rule[-6pt]{0pt}{10pt} \cr
p_6 &0&0&0&0&\delta }$$
and consider its image under the linear map $\iota_5\colon\R^5\to\R^7$, given by
\begin{equation*}
\iota_5\big(w_1^+,w_2^+,w_1^-,w_2^-,u\big)=\big(w_1^+,w_2^+,-w_1^+ -w_2^+,w_1^-,w_2^-,-w_1^- -w_2^-,u\big).
\end{equation*}
Then $(\vec w_+,\vec w_-,u)\in \R^7$ is in $\iota_5(\esimp^5)$ if and only if $R(\vec w_+,\vec w_-,u)\in\epinched$.
\end{proposition}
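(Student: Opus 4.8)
The plan is to deduce this directly from \Cref{prop:verticesboththorpe} by eliminating the two auxiliary variables $t_1$ and $t_2$, using the elementary fact that a linear image of a polytope is the convex hull of the images of its vertices. Concretely, let $\rho\colon\R^9\to\R^7$ and $\bar\rho\colon\R^7\to\R^5$ be the coordinate projections that forget the last two entries ($t_1$ and $t_2$); a one-line check from the formulas for $\iota_7$ and $\iota_5$ gives the compatibility $\rho\circ\iota_7=\iota_5\circ\bar\rho$. By \Cref{lem:einsteinineqs}, $R(\vec w_+,\vec w_-,u)\in\epinched$ holds if and only if there exist $t_1,t_2\in\R$ for which $(\vec w_+,\vec w_-,u,t_1,t_2)$ satisfies \eqref{eq:ww} and \eqref{eq:esimpreduced}, and by \Cref{prop:verticesboththorpe} this is equivalent to $(\vec w_+,\vec w_-,u,t_1,t_2)\in\iota_7(\esimp^7)$. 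Hence the set of $(\vec w_+,\vec w_-,u)\in\R^7$ that come from operators in $\epinched$ is exactly $\rho\bigl(\iota_7(\esimp^7)\bigr)$.

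Next I would compute this image. Since $\rho$ is linear and $\esimp^7=\conv(v_1,\dots,v_8)$, we have $\rho(\iota_7(\esimp^7))=\conv\bigl(\rho(\iota_7(v_1)),\dots,\rho(\iota_7(v_8))\bigr)$. Comparing the matrix in \Cref{prop:verticesboththorpe} with the matrix in \Cref{prop:einsteinsimplex} — i.e. deleting the $t_1,t_2$ columns — one reads off $\rho(\iota_7(v_j))=\iota_5(p_j)$ for $j=1,2,3,4$, while $\rho(\iota_7(v_5))=\rho(\iota_7(v_6))=\iota_5(p_5)$ and $\rho(\iota_7(v_7))=\rho(\iota_7(v_8))=\iota_5(p_6)$ (equivalently, via $\rho\circ\iota_7=\iota_5\circ\bar\rho$, the map $\bar\rho$ carries the eight rows of the first matrix onto the six rows of the second). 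Therefore $\rho(\iota_7(\esimp^7))=\iota_5\bigl(\conv(p_1,\dots,p_6)\bigr)=\iota_5(\esimp^5)$, which is precisely the asserted equality.

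It then remains to confirm that $\esimp^5=\conv(p_1,\dots,p_6)$ really is a $5$-simplex, i.e. that $p_1,\dots,p_6$ are affinely independent for $0<\delta<1$. The matrix with rows $p_1-p_6,\dots,p_5-p_6$ has a block structure — columns $1,2$ only meet rows $1,2$, columns $3,4$ only meet rows $3,4$, and row $5$ is supported in column $5$ — and a short determinant evaluation gives $\tfrac{16}{9}(1-\delta)^5$, which is nonzero in the relevant range. (Alternatively, this follows from \Cref{prop:verticesboththorpe}: the point $\iota_7\!\bigl(\tfrac{\delta-1}{3},0,\tfrac{\delta-1}{3},0,\tfrac{\delta+1}{2},0,0\bigr)$ lies in the relative interior of the $7$-dimensional polytope $\iota_7(\esimp^7)$, and its image under $\rho$ lies in the relative interior of the image, which together with the block computation pins the dimension at $5$.)

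The only real ``obstacle'' here is bookkeeping: one must be careful that the several maps $\iota_5$, $\iota_7$, $\rho$, $\bar\rho$ are threaded together correctly, so that ``project away $t_1$ and $t_2$'' is literally the identity $\rho\circ\iota_7=\iota_5\circ\bar\rho$, and one must not forget to certify full-dimensionality of $\esimp^5$. There is no analytic or combinatorial difficulty beyond this; the heavy lifting was done in \Cref{prop:verticesboththorpe}.
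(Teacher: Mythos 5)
Your proof is correct and follows the same route as the paper: reduce to \Cref{prop:verticesboththorpe} via \Cref{lem:einsteinineqs}, project away $t_1,t_2$, and use that a linear image of a convex hull is the convex hull of the images, then identify the images of the eight $v_j$ with the six $p_j$. You additionally verify that $p_1,\dots,p_6$ are affinely independent (via the determinant $\tfrac{16}{9}(1-\delta)^5$), which the paper leaves implicit; that is a small but welcome addition rather than a divergence of method.
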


\begin{proof}
By Lemma~\ref{lem:einsteinineqs} and Proposition~\ref{prop:verticesboththorpe}, we have that $R(\vec w_+,\vec w_-,u)\in\epinched$ if and only if $(\vec w_+,\vec w_-,u)\in\R^7$ is such that $(\vec w_+,\vec w_-,u,t_1,t_2)\in\iota_7(\esimp^7)$ for some $t_1,t_2\in\R$. In other words, if and only if $(\vec w_+,\vec w_-,u)\in\Pi\big(\iota_7(\esimp^7)\big)$ where $\Pi\colon\R^9\to\R^7$ is the projection that eliminates the last two coordinates. The conclusion follows, since
\begin{equation*}
\begin{aligned}
\Pi\big(\iota_7(\esimp^7)\big) &=\Pi\big(\iota_7( \conv(v_1,\dots,v_8) )\big)\\
&=\conv\!\big(\Pi(\iota_7(v_1)),\dots,\Pi(\iota_7(v_8))\big)\\
&=\conv(\iota_5(p_1),\dots,\iota_5(p_6))\\
&=\iota_5(\esimp^5),
\end{aligned}
\end{equation*}
where the third equality holds because $\Pi(\iota_7(v_j))=p_j$ if $1\leq j\leq 4$, $\Pi(\iota_7(v_j))=p_5$ if $j=5,6$, and $\Pi(\iota_7(v_j))=p_6$ if $j=7,8$. 
\end{proof}

\begin{proposition}\label{prop:einsteinonethorpe}
Let $\esimp^6\subset\R^6$ be the $6$-simplex defined as convex hull of the rows  $q_1,\ldots, q_7$ of the matrix:
\begin{equation*}
\bordermatrix{
  &  w_1^{+} &  w_2^{+} & w_1^{-} & w_2^{-} & u & t_1  \cr
q_1&\frac{2}{3}(\delta-1)&\frac{2}{3}(\delta-1)&0&0&\frac{1}{3}(2\delta+1)& \frac{1}{3}(1-\delta)\rule[-6pt]{0pt}{10pt} \cr
q_2&\frac{4}{3}(\delta-1)&\frac{2}{3}(1-\delta)  &0&0&\frac{1}{3}(\delta+2) &\frac{2}{3}(1-\delta)\rule[-6pt]{0pt}{10pt} \cr
q_3&0&0&\frac{2}{3}(\delta-1) &\frac{2}{3}(\delta-1)&\frac{1}{3}(2\delta+1)&\frac{1}{3}(\delta-1)\rule[-6pt]{0pt}{10pt} \cr
q_4&0&0&\frac{4}{3}(\delta-1)&\frac{2}{3}(1-\delta) &\frac{1}{3}(\delta+2)&\frac{2}{3}(\delta-1)\rule[-6pt]{0pt}{10pt} \cr
q_5&0&0&0&0&1&\delta-1\rule[-6pt]{0pt}{10pt} \cr
q_6&0&0&0&0&1&1-\delta \rule[-6pt]{0pt}{10pt} \cr
q_7 &0&0&0&0&\delta&0 }
\end{equation*}
and consider its image under the linear map $\iota_6\colon\R^6\to\R^8$, given by
\begin{equation*}
\iota_6\big(w_1^+,w_2^+,w_1^-,w_2^-,u,t_1\big)=\big(w_1^+,w_2^+,-w_1^+ -w_2^+,w_1^-,w_2^-,-w_1^- -w_2^-,u,t_1\big).
\end{equation*}
Then $(\vec w_+,\vec w_-,u,t_1)\in\R^8$ is in $\iota_6(\esimp^6)$ if and only if $R(\vec w_+,\vec w_-,u-\delta)+t_1\, *\succeq0$ and $R(\vec w_+,\vec w_-,u)\in\epinched$.
\end{proposition}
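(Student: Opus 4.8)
The plan is to mimic the proof of \Cref{prop:einsteinsimplex}, but to project the $7$-simplex $\esimp^7$ of \Cref{prop:verticesboththorpe} by eliminating \emph{only} the variable $t_2$, keeping $t_1$. By \Cref{lem:einsteinineqs} and \Cref{prop:verticesboththorpe}, a point $(\vec w_+,\vec w_-,u,t_1,t_2)\in\R^9$ lies in $\iota_7(\esimp^7)$ if and only if it satisfies \eqref{eq:ww} and \eqref{eq:esimpreduced}. Let $\Pi'\colon\R^9\to\R^8$ be the linear projection deleting the last coordinate $t_2$; then $(\vec w_+,\vec w_-,u,t_1)\in\Pi'\big(\iota_7(\esimp^7)\big)$ exactly when there is some $t_2\in\R$ for which \eqref{eq:ww} and \eqref{eq:esimpreduced} hold.

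The next step is to identify $\Pi'\big(\iota_7(\esimp^7)\big)$ with $\iota_6(\esimp^6)$. Since $\Pi'$ is linear it commutes with convex hulls, so $\Pi'\big(\iota_7(\esimp^7)\big)=\conv\big(\Pi'(\iota_7(v_1)),\dots,\Pi'(\iota_7(v_8))\big)$; a direct comparison of coordinate rows gives $\Pi'(\iota_7(v_j))=\iota_6(q_j)$ for $1\le j\le6$, while $v_7$ and $v_8$ differ only in their $t_2$-coordinate and so both map to $\iota_6(q_7)$. Hence $\Pi'\big(\iota_7(\esimp^7)\big)=\conv\big(\iota_6(q_1),\dots,\iota_6(q_7)\big)=\iota_6(\esimp^6)$. (Along the way one checks that $q_1,\dots,q_7$ are affinely independent, so that $\esimp^6$ is genuinely a $6$-simplex; this is a routine rank computation.)

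It remains to rephrase ``there exists $t_2$ satisfying \eqref{eq:ww} and \eqref{eq:esimpreduced}'' as the two conditions in the statement. The four inequalities of \eqref{eq:esimpreduced} decouple: the two lower bounds $\delta\le w_1^++u+t_1$ and $\delta\le w_1^-+u-t_1$ involve only $t_1$, and the two upper bounds $w_3^++u+t_2\le1$ and $w_3^-+u-t_2\le1$ involve only $t_2$. Since $R(\vec w_+,\vec w_-,u-\delta)+t_1\,*=\big(R(\vec w_+,\vec w_-,u)-\delta\,\id\big)+t_1\,*$, and this operator acts diagonally with entries $(u-\delta)+w_i^++t_1$ on $\wedge^2_+\R^4$ and $(u-\delta)+w_i^--t_1$ on $\wedge^2_-\R^4$, the eigenvalue ordering \eqref{eq:ww} shows it is positive-semidefinite precisely when the two lower bounds hold. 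On the other hand, \Cref{lem:einsteinineqs} says $R(\vec w_+,\vec w_-,u)\in\epinched$ precisely when \eqref{eq:esimpreduced} holds for \emph{some} pair of reals in place of $(t_1,t_2)$. The forward implication is then immediate; for the converse, positive-semidefiniteness supplies the two lower bounds for our given $t_1$, membership in $\epinched$ supplies a value $t_2$ realizing the two upper bounds, and \eqref{eq:ww} holds because $R(\vec w_+,\vec w_-,u)$ is an algebraic curvature operator in canonical form, so \eqref{eq:ww} and \eqref{eq:esimpreduced} hold for this $t_2$.

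I do not anticipate a real obstacle: once \Cref{prop:verticesboththorpe} is available, this proposition is essentially bookkeeping, and it parallels the proof of \Cref{prop:einsteinsimplex} almost verbatim. The two places that need attention are (i) the coordinate identities $\Pi'(\iota_7(v_j))=\iota_6(q_j)$ and the affine independence of the $q_j$, which are direct computations, and (ii) the observation that the substitution $u\mapsto u-\delta$ together with the summand $t_1\,*$ converts ``$\succeq0$'' into exactly the lower-bound half of \eqref{eq:esimpreduced}, the upper-bound half being what \Cref{lem:einsteinineqs} packages as membership in $\epinched$.
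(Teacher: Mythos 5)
Your proposal is correct and takes essentially the same approach as the paper: project $\esimp^7$ by the map deleting the $t_2$-coordinate, verify that the images of $v_1,\dots,v_8$ are $\iota_6(q_1),\dots,\iota_6(q_7)$ with $v_7,v_8$ both mapping to $\iota_6(q_7)$, and interpret the decoupling of \eqref{eq:esimpreduced} into the $t_1$-only lower bounds (equivalent to $R(\vec w_+,\vec w_-,u-\delta)+t_1\,*\succeq0$) and the $t_2$-only upper bounds (absorbed into $R(\vec w_+,\vec w_-,u)\in\epinched$ via \Cref{lem:einsteinineqs}). The paper's version is terser and leaves the decoupling argument implicit; your writeup spells it out correctly.
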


\begin{proof}
Lemma~\ref{lem:einsteinineqs} and Proposition~\ref{prop:verticesboththorpe} imply that $R(\vec w_+,\vec w_-,u-\delta)+t_1\, *\succeq0$ and $R(\vec w_+,\vec w_-,u)\in\epinched$ if and only if 
$(\vec w_+,\vec w_-,u,t_1)\in\iota_7(\esimp^7)$ for some $t_2\in\R$, i.e., $(\vec w_+,\vec w_-,u,t_1)\in \Pi'\big(\iota_7(\esimp^7)\big)$, where $\Pi'\colon \R^9\to\R^8$ is the projection that eliminates the last coordinate. Similarly to Proposition~\ref{prop:einsteinsimplex}, we have that $\Pi'\big(\iota_7(\esimp^7)\big)=\iota_6(\esimp^6)$, as $\Pi'(\iota_7(v_j))=q_j$ if $1\leq j\leq 6$, and $\Pi'(\iota_7(v_j))=q_7$ if $j=7,8$.
\end{proof}

We shall refer to $\esimp^5$ as the \emph{Einstein simplex}, and to $\esimp^6$ and $\esimp^7$ as \emph{augmented Einstein simplices}. 
The rationale for this nomenclature is that, by Proposition~\ref{prop:einsteinsimplex} and \eqref{eq:curvop4}, the set of conjugacy classes of positively $\delta$-pinched Einstein curvature operators is parametrized by $\esimp^5$. Indeed, $\varphi\colon \esimp^5\to \epinched$, where $\varphi=R\circ\iota_5$ and $R\colon \R^7\to\mathcal U\oplus\mathcal W$ is given by \eqref{eq:Rww}, is an affine map whose image is a section for the change of basis $\SO(4)$-action on $\epinched\subset \mathcal U\oplus\mathcal W$. Analogously, $\esimp^6$ and $\esimp^7$ parametrize this set together with the corresponding $t_1$ and $t_2$ for which \eqref{eq:t1t2bound} holds. 

\begin{remark}\label{rem:geominterp}
The vertices $p_1,\dots,p_6$ of the Einstein simplex $\esimp^5$ correspond to geometrically meaningful curvature operators. Namely, using $\varphi=R\circ\iota_5$, we have: 
\begin{equation*}
\begin{aligned}
\varphi(p_1)&=\tfrac{1-\delta}{3} R_{\C P^2}+\tfrac{4\delta-1}{3}R_{S^4}, & 
\varphi(p_3)&=\tfrac{1-\delta}{3} R_{\overline{\C P^2}}+\tfrac{4\delta-1}{3}R_{S^4}, &
\varphi(p_5)&= R_{S^4},\\
\varphi(p_2)&=\tfrac{1-\delta}{3} R_{\C H^2}+\tfrac{4-\delta}{3}R_{S^4},  &
\varphi(p_4)&=\tfrac{1-\delta}{3} R_{\overline{\C H^2}}+\tfrac{4-\delta}{3}R_{S^4}, &
\varphi(p_6)&=\delta R_{S^4},
\end{aligned}
\end{equation*}
where $R_{S^4}=\id$, while $R_{\C P^2}$ and $R_{\C H^2}$ are given in \eqref{eq:RCPH2}, and satisfy $1\leq \sec\leq 4$ and $-4\leq \sec\leq -1$ respectively. 
Recall that $\overline{\C P^2}$ and $\overline{\C H^2}$ are the manifolds $\C P^2$ and $\C H^2$ with the opposite orientation. Being positively $\delta$-pinched is invariant under change of orientation (which interchanges $\vec w_+$ and $\vec w_-$ and fixes $u$), so the collection of vertices also has this symmetry. This is clear by comparing the first two columns above and recalling that $S^4$ has orientation-reversing isometries. Finally, note that $\varphi(p_j)$ depend affinely on $\delta$, and, of course, become equal to $R_{S^4}$ if $\delta=1$.
\end{remark}

\section{Traceless Ricci bounds}
The purpose of this section is to prove a new upper bound (Proposition~\ref{prop:boundC}) on the norm of the traceless Ricci part of $4$-dimensional curvature operators with either a lower or upper sectional curvature bound. In addition to its role in the proof of Theorem~\ref{mainthm:Lambda}, we believe this result is of independent interest and may have other applications; e.g., it yields a simple proof of the algebraic Hopf question in dimension $4$, see Corollary~\ref{cor:alg_hopf_quest}. We begin with two algebraic lemmas.
 
\begin{lemma}\label{lem:derangement}
Let $0<\lambda_1<\cdots <\lambda_n$ and $0<\mu_1< \cdots<\mu_n$. For all permutations $\phi\in\mathfrak{S}_n$ we have
\begin{equation*}
\sum_{i=1}^n\lambda_i\mu_{\phi(i)}\leq\sum_{i=1}^n\lambda_i\mu_{i}.
\end{equation*}
\end{lemma}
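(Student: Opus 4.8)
The plan is to prove this rearrangement-type inequality by the standard exchange argument, exploiting the fact that both sequences $(\lambda_i)$ and $(\mu_i)$ are strictly increasing. First I would reduce to showing that the identity permutation maximizes $\sum_i \lambda_i \mu_{\phi(i)}$ over all $\phi \in \mathfrak{S}_n$. Since $\mathfrak{S}_n$ is finite, a maximizing permutation $\phi$ exists; I would argue it can be taken to be the identity.

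The key step is the transposition argument: suppose $\phi$ is a maximizer and $\phi \neq \id$. Then there exist indices $i < j$ with $\phi(i) > \phi(j)$ (an inversion). Let $\phi'$ be the permutation obtained from $\phi$ by swapping the values at $i$ and $j$, i.e.\ $\phi'(i) = \phi(j)$, $\phi'(j) = \phi(i)$, and $\phi'(k) = \phi(k)$ otherwise. The difference is
\begin{equation*}
\sum_{k=1}^n \lambda_k \mu_{\phi'(k)} - \sum_{k=1}^n \lambda_k \mu_{\phi(k)} = \lambda_i \mu_{\phi(j)} + \lambda_j \mu_{\phi(i)} - \lambda_i \mu_{\phi(i)} - \lambda_j \mu_{\phi(j)} = (\lambda_j - \lambda_i)(\mu_{\phi(i)} - \mu_{\phi(j)}).
\end{equation*}
Since $i < j$ gives $\lambda_j - \lambda_i > 0$, and $\phi(i) > \phi(j)$ gives $\mu_{\phi(i)} - \mu_{\phi(j)} > 0$, this difference is strictly positive, so $\phi'$ strictly increases the sum, contradicting maximality of $\phi$. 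Hence the maximizer has no inversions, i.e.\ it is the identity, which proves the claimed inequality for every $\phi$.

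Alternatively, to avoid invoking the existence of a maximizer, I could induct directly on $n$: write the sum $\sum_i \lambda_i \mu_{\phi(i)}$, locate the index $m$ with $\phi(m) = n$, swap to move $\mu_n$ into position $n$ (this does not decrease the sum by the same computation as above, since $\lambda_n \geq \lambda_m$ and $\mu_n \geq \mu_{\phi(n)}$), and then apply the inductive hypothesis to the remaining $(n-1)$-element problem on indices $\{1,\dots,n-1\}$ with the induced permutation. Either route is elementary; there is no real obstacle here, since strict monotonicity of both sequences makes every exchange step go the right way. The only point requiring a touch of care is that the inequality is stated non-strictly (with $\leq$), which is consistent with the fact that the exchange only strictly increases the sum when an actual inversion is corrected — so equality holds precisely when $\phi = \id$, but we only need the non-strict bound.
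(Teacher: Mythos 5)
Your proof is correct and takes essentially the same route as the paper's: both use the finite-maximizer-plus-exchange argument, identifying an inversion $i<j$ with $\phi(i)>\phi(j)$ and showing the swap increases the sum by $(\lambda_j-\lambda_i)(\mu_{\phi(i)}-\mu_{\phi(j)})>0$. The alternative inductive route you sketch is also fine but unnecessary.
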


\begin{proof}
 Suppose, by contradiction, that the permutation $\phi\in\mathfrak{S}_n$ that maximizes $\sum_{i=1}^n\lambda_i\mu_{\phi(i)}$ is not the identity. Then, there are $1\leq i<j\leq n$ with $\phi(i)>\phi(j)$. We have
 \begin{equation*}
 (\lambda_i\mu_{\phi(j)}+\lambda_j\mu_{\phi(i)})-(\lambda_i\mu_{\phi(i)}+\lambda_j\mu_{\phi(j)})=(\lambda_j-\lambda_i)(\mu_{\phi(i)}-\mu_{\phi(j)})>0,
 \end{equation*} 
 contradicting the maximality of $\sum_{i=1}^n\lambda_i\mu_{\phi(i)}$.
\end{proof}

\begin{lemma}\label{lem:niceinequality}
Let $0\leq\lambda_1\leq\cdots \leq\lambda_n$, $0\leq\mu_1\leq \cdots\leq\mu_n$, and $C\in\textnormal{Mat}_{n\times n}(\R)$ be such that  
\begin{equation}\label{eq:lemmapsd}
\begin{pmatrix}
 \diag(\lambda_1, \ldots, \lambda_n)& C^\mathrm t\\
C & \diag(\mu_1, \ldots, \mu_n)
\end{pmatrix}\succeq0.
\end{equation}
Then $|C|^2\leq \sum\limits_{i=1}^n \lambda_i \mu_i$.
\end{lemma}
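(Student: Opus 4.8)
The plan is to reduce the estimate to the rearrangement inequality of \Cref{lem:derangement} via a diagonal normalization and the Birkhoff--von Neumann theorem.

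\emph{Reduction to the strictly positive case.} Replacing $\lambda_i$ by $\lambda_i+i\varepsilon$ and $\mu_i$ by $\mu_i+i\varepsilon$ for $\varepsilon>0$ only adds a positive diagonal matrix to the block matrix in \eqref{eq:lemmapsd}, so it stays positive-semidefinite; moreover the new sequences are strictly increasing and strictly positive, $|C|^2$ is unchanged, and letting $\varepsilon\searrow0$ at the end recovers the claim. Hence we may assume $0<\lambda_1<\cdots<\lambda_n$ and $0<\mu_1<\cdots<\mu_n$. Write $\Lambda=\diag(\lambda_1,\dots,\lambda_n)$ and $D=\diag(\mu_1,\dots,\mu_n)$, and conjugate \eqref{eq:lemmapsd} by the invertible symmetric matrix $\diag(\Lambda^{-1/2},D^{-1/2})$. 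This yields $\left(\begin{smallmatrix}\id & \widetilde C^{\mathrm t}\\ \widetilde C & \id\end{smallmatrix}\right)\succeq0$, where $\widetilde C:=D^{-1/2}C\Lambda^{-1/2}$. A standard computation shows that this block matrix is positive-semidefinite if and only if $\widetilde C\widetilde C^{\mathrm t}\preceq\id$, i.e.\ all singular values of $\widetilde C$ are at most $1$; in particular each diagonal entry $(\widetilde C\widetilde C^{\mathrm t})_{ii}=\sum_j\widetilde C_{ij}^2$ and $(\widetilde C^{\mathrm t}\widetilde C)_{jj}=\sum_i\widetilde C_{ij}^2$ is $\leq1$. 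Since $C_{ij}=\sqrt{\mu_i}\,\widetilde C_{ij}\,\sqrt{\lambda_j}$, we get $|C|^2=\sum_{i,j}\mu_i\lambda_j\,\widetilde C_{ij}^2$.

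\emph{Combinatorial step.} It remains to bound $\sum_{i,j}\mu_i\lambda_j P_{ij}$, where $P:=(\widetilde C_{ij}^2)$ is a nonnegative matrix whose row sums and column sums are all $\leq1$. Any such $P$ is dominated entrywise by a doubly stochastic matrix $Q$: setting $r_i=1-\sum_jP_{ij}\geq0$, $c_j=1-\sum_iP_{ij}\geq0$, and $s=\sum_ir_i=\sum_jc_j$, one checks that $Q_{ij}:=P_{ij}+r_ic_j/s$ (or $Q=P$ if $s=0$) is doubly stochastic with $Q\geq P$. As $\mu_i\lambda_j\geq0$, we have $\sum_{i,j}\mu_i\lambda_jP_{ij}\leq\sum_{i,j}\mu_i\lambda_jQ_{ij}$, and by Birkhoff--von Neumann $Q=\sum_\phi c_\phi P_\phi$ is a convex combination of permutation matrices. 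Hence $\sum_{i,j}\mu_i\lambda_jQ_{ij}=\sum_\phi c_\phi\sum_j\mu_{\phi(j)}\lambda_j\leq\sum_j\mu_j\lambda_j$ by \Cref{lem:derangement} (applicable since both perturbed sequences are strictly increasing and positive). Chaining the inequalities gives $|C|^2\leq\sum_i\lambda_i\mu_i$, as desired.

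The one step requiring genuine care is the passage from positive-semidefiniteness to the doubly stochastic structure. The naive bound from nonnegativity of the $2\times2$ principal minors only gives $C_{ij}^2\leq\lambda_j\mu_i$, hence the far too weak $|C|^2\leq(\sum_i\lambda_i)(\sum_j\mu_j)$; it is precisely the normalization by $\Lambda^{-1/2}$ and $D^{-1/2}$ that converts the total "budget" into row- and column-sum constraints on $P$, after which Birkhoff's theorem and the rearrangement inequality finish the argument.
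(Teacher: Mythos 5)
Your proof is correct, and it follows the same overall skeleton as the paper's proof (normalize the off-diagonal block using the diagonal blocks, reduce to a combinatorial statement about doubly stochastic matrices via Birkhoff--von Neumann, and finish with the rearrangement inequality of \Cref{lem:derangement}). The one genuine technical difference is in how you pass from the normalized matrix $\widetilde C$ to a doubly stochastic matrix. The paper invokes the fact that the extreme points of the unit ball in the spectral norm are exactly the orthogonal matrices, uses convexity of the objective to reduce to $\widetilde C$ orthogonal, and then observes that the matrix of squared entries of an orthogonal matrix is doubly stochastic. You instead only extract the weaker information that the diagonal entries of $\widetilde C\widetilde C^{\mathrm t}$ and $\widetilde C^{\mathrm t}\widetilde C$ are $\leq 1$, i.e.\ that $P=(\widetilde C_{ij}^2)$ has row and column sums at most $1$, and then complete $P$ to a genuine doubly stochastic matrix $Q\geq P$ entrywise via the rank-one correction $Q_{ij}=P_{ij}+r_ic_j/s$. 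This sidesteps the extreme-point characterization of the spectral unit ball entirely, replacing it with an explicit and elementary construction; it is slightly more self-contained. Your handling of the degenerate cases is also a bit more explicit: rather than the paper's ``by continuity, we shall assume'' strict inequalities, you spell out the perturbation $\lambda_i\mapsto\lambda_i+i\varepsilon$, $\mu_i\mapsto\mu_i+i\varepsilon$ and the limit $\varepsilon\searrow0$, which is exactly what is needed to apply \Cref{lem:derangement}. Both routes are sound; yours trades a citation to the geometry of the spectral-norm ball for a short ad hoc argument.
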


\begin{proof}
By continuity, we shall assume $0<\lambda_1<\cdots <\lambda_n$ and $0<\mu_1< \cdots<\mu_n$. Using Schur complements, we see that \eqref{eq:lemmapsd} holds if and only if $$\diag(\mu_1,\ldots,\mu_n)-C\diag(\lambda^{-1}_1, \ldots, \lambda^{-1}_n)C^\mathrm t\succeq0.$$ This is equivalent to $D=(d_{ij})_{1\leq i,j\leq n}$ lying in the unit ball with respect to the spectral norm, where $d_{ij}=\frac{c_{ij}}{\sqrt{\lambda_i\mu_j}}$. We thus want to bound
\begin{equation}\label{eq:birkhoff}
|C|^2=\sum_{i,j=1}^n\lambda_i\mu_j\,d_{ij}^2
\end{equation}
from above. 
The extreme points of the unit ball in $\textnormal{Mat}_{n\times n}(\R)$ with respect to the spectral norm are orthogonal matrices (see e.g.~\cite[Thm.~4(i)]{grone}), so we may assume $D$ is orthogonal, as the right-hand side of \eqref{eq:birkhoff} is a convex function in its entries.
In that case, the matrix $D_2=(d_{ij}^2)_{1\leq i,j\leq n}$ is \emph{doubly stochastic}, i.e., each of its rows and columns sums to $1$. By the Birkhoff--von Neumann Theorem (see e.g.~\cite[Thm.~II.5.2]{barvinok}), every doubly stochastic matrix lies in the convex hull of permutation matrices. Thus, for bounding \eqref{eq:birkhoff} from above, we may further assume $D=D_2$ is a permutation matrix, so the conclusion follows from Lemma~\ref{lem:derangement}.
\end{proof}

We are now ready for the main result of this section. Although it solely regards algebraic curvature operators in $\Sym^2_b(\wedge^2\R^4)$, we state it as a pointwise estimate on a Riemannian $4$-manifold to render it more easily applicable elsewhere.

\begin{proposition}\label{prop:boundC}
Let $(M^4,\g)$ be a $4$-manifold, $p\in M$, and $k\in\R$. 
Let $R$ be the curvature operator at $p\in M$, and $u$, $\vec w_\pm$, and $C$ as in \eqref{eq:curvop4} and \eqref{eq:widef}.
\begin{enumerate}[\rm (i)]
\item If all $2$-planes in $T_pM$ have either $\sec\geq k$ or $\sec\leq k$, then
\begin{equation*}
|C|^2\leq 3(u-k)^2+\langle \vec w_+,\vec w_-\rangle,
\end{equation*}
\item If $t\in\R$ is such that $\pm (R-k\id) +t\,*\succeq0$ on $\wedge^2 T_pM$, then
\begin{equation*}
|C|^2\leq 3(u-k)^2-3t^2+\langle \vec w_+,\vec w_-\rangle.
\end{equation*}
\end{enumerate}
\end{proposition}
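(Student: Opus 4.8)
The plan is to reduce part (i) to part (ii) using the Finsler--Thorpe Trick, and to prove part (ii) by a direct application of \Cref{lem:niceinequality} to a suitably modified curvature operator written in the canonical block form \eqref{eq:curvop4}.

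For part (ii): suppose $R - k\,\id + t\,* \succeq 0$ (the case $-(R-k\,\id)+t\,* \succeq 0$ is handled the same way, or by replacing $R$ with $-R$, $t$ with $-t$, and noting that $\underline{\chi}$-type expressions and $|C|^2$, $\langle \vec w_+,\vec w_-\rangle$ are unchanged under $R\mapsto -R$). In the canonical form \eqref{eq:curvop4}, the operator $R-k\,\id$ has diagonal blocks $(u-k)\id + W_\pm$ and off-diagonal blocks $C^{\mathrm t}$, $C$; adding $t\,*$ shifts the $(+)$-block by $+t\,\id$ and the $(-)$-block by $-t\,\id$ (since $*$ acts as $+1$ on $\wedge^2_+$ and $-1$ on $\wedge^2_-$). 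Thus
\begin{equation*}
R-k\,\id+t\,* = \begin{pmatrix} (u-k+t)\id + W_+ & C^{\mathrm t}\\ C & (u-k-t)\id + W_- \end{pmatrix} \succeq 0.
\end{equation*}
The diagonal entries of the upper-left block are $u-k+t+w_i^+$ and those of the lower-right block are $u-k-t+w_i^-$. Since this matrix is positive-semidefinite, all these diagonal entries are $\geq 0$, so $\lambda_i := u-k+t+w_i^+ \geq 0$ and $\mu_i := u-k-t+w_i^- \geq 0$, and they are already in increasing order because $w_1^\pm \le w_2^\pm \le w_3^\pm$. Applying \Cref{lem:niceinequality} gives
\begin{equation*}
|C|^2 \le \sum_{i=1}^3 \lambda_i \mu_i = \sum_{i=1}^3 \big((u-k+t)+w_i^+\big)\big((u-k-t)+w_i^-\big).
\end{equation*}
Expanding the product and using $\sum_i w_i^+ = \sum_i w_i^- = 0$ (from \eqref{eq:ww}) makes all the cross terms $(u-k\pm t)\sum_i w_i^\mp$ vanish, leaving $3(u-k)^2 - 3t^2 + \sum_i w_i^+ w_i^- = 3(u-k)^2 - 3t^2 + \langle \vec w_+, \vec w_- \rangle$, which is exactly the asserted bound.

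For part (i): if every $2$-plane at $p$ has $\sec \ge k$, then $\sec_{R-k\,\id} \ge 0$, so by the Finsler--Thorpe Trick (\Cref{lem:finslerthorpe}) there exists $t\in\R$ with $R-k\,\id+t\,* \succeq 0$; apply part (ii) and drop the $-3t^2 \le 0$ term. If instead every $2$-plane has $\sec \le k$, apply the same reasoning to $-R$ with $-k$ in place of $k$, which gives $-(R-k\,\id)+t\,*\succeq 0$ for some $t$, and again part (ii) (in its second sign case) yields $|C|^2 \le 3(u-k)^2 - 3t^2 + \langle \vec w_+, \vec w_- \rangle \le 3(u-k)^2 + \langle \vec w_+, \vec w_- \rangle$. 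I expect no serious obstacle here; the only point requiring minor care is checking that the two hypotheses "all $2$-planes have $\sec\ge k$" and "all $2$-planes have $\sec\le k$" are genuinely symmetric under $R \mapsto -R$, $k \mapsto -k$, and that the quantities in the conclusion are invariant under that substitution — which is immediate since $u \mapsto -u$ flips $(u-k)^2$'s argument sign harmlessly, and $|C|^2$, $\langle \vec w_+, \vec w_-\rangle$ are even in $R$.
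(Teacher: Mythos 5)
Your proposal is correct and follows essentially the same route as the paper: reduce (i) to (ii) via the Finsler--Thorpe Trick, then prove (ii) by observing that $\pm(R-k\id)+t\,*$ in the block form \eqref{eq:curvop4} satisfies the hypothesis \eqref{eq:lemmapsd} of \Cref{lem:niceinequality} with $\lambda_i=u-k+t+w_i^+$, $\mu_i=u-k-t+w_i^-$, and expanding $\sum_i\lambda_i\mu_i$ using $\sum_i w_i^\pm=0$. The only difference is that you spell out the $-(R-k\id)+t\,*\succeq 0$ sign case via the $R\mapsto-R$, $k\mapsto-k$ symmetry, whereas the paper handles both signs in one line; this is a harmless elaboration.
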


\begin{proof}
As elsewhere in the paper, we identify $T_pM\cong \R^4$ and assume the curvature operator $R$ is in the canonical form \eqref{eq:curvop4}. 
By Finsler--Thorpe's Trick (Lemma~\ref{lem:finslerthorpe}), if either $\sec\geq k$ or $\sec\leq k$, then there exists $t\in\R$ such that $R-k\id +t\,*\succeq0$ or $-(R-k\id) +t\,*\succeq0$, respectively; so it suffices to prove (ii).

Condition \eqref{eq:lemmapsd} is verified since $\pm (R- k\id)+t\,*\succeq0$, so we may apply Lemma~\ref{lem:niceinequality} with $n=3$, $\lambda_i=u-k+w_i^++t$, and $\mu_i=u-k+w_i^--t$, concluding that
\begin{align*}
   |C|^2&\leq \sum_{i=1}^3(u-k+w_i^++t)(u-k+w_i^--t) \\
  &= \sum_{i=1}^3 (u-k+t)(u-k-t)+w_i^+w_i^-\\
  &= 3(u-k)^2-3t^2+\langle \vec w_+,\vec w_-\rangle,
 \end{align*}
where the first equality uses \eqref{eq:ww}.
\end{proof}

While the general case of the Hopf question asking whether closed $2d$-dimensional manifolds $(M^{2d},\g)$ with $\sec\geq0$ or $\sec\leq0$ have $(-1)^d\chi(M)\geq0$ remains an important open problem, its \emph{algebraic} variant asking whether 
the Chern--Gauss--Bonnet integrand $\underline{\chi}(R)$ computed at an algebraic curvature operator $R\in\Sym^2(\wedge^2\R^{2d})$ with $\sec_{\pm R} \geq0$ satisfies $(-1)^d\underline{\chi}(R)\geq0$ was answered affirmatively if $2d=4$ by Milnor~\cite{chern,bishop-goldberg}, and negatively if $2d\geq6$ by Geroch~\cite{geroch,klembeck}.
The former result of Milnor can be easily recovered with Proposition~\ref{prop:boundC}, which also allows to characterize the equality case, as follows.

\begin{corollary}[Algebraic Hopf question in dimension 4]\label{cor:alg_hopf_quest}
If $R\in\Sym^2(\wedge^2\R^{4})$ has $\sec_{\pm R} \geq0$, then $\underline{\chi}(R)\geq0$. Moreover, $\underline{\chi}(R)=0$ if and only if $\pm R\succeq 0$, $W_+=W_-$, and $|C|^2=3u^2+|W_\pm|^2$; in particular, if $R$ is Einstein, then $R=0$.
\end{corollary}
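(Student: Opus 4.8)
The plan is to deduce \Cref{cor:alg_hopf_quest} directly from \Cref{prop:boundC}(i) and the formula \eqref{eq:chi}. First I would write a general modified curvature operator $R$ in the canonical form \eqref{eq:curvop4}, so that $\underline{\chi}(R)=\tfrac18(6u^2+|W_+|^2+|W_-|^2-2|C|^2)$. Since $\sec_{\pm R}\geq 0$ means every $2$-plane has either $\sec_R\geq 0$ or $\sec_R\leq 0$ — in fact here both $R$ and $-R$ are constrained, but we only need that \emph{each} plane satisfies one of the two inequalities with $k=0$ — we are exactly in the hypothesis of \Cref{prop:boundC}(i) with $k=0$. That gives $|C|^2\leq 3u^2+\langle\vec w_+,\vec w_-\rangle$. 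Substituting,
\begin{equation*}
8\,\underline{\chi}(R)=6u^2+|W_+|^2+|W_-|^2-2|C|^2\geq 6u^2+|W_+|^2+|W_-|^2-6u^2-2\langle\vec w_+,\vec w_-\rangle=|\vec w_+-\vec w_-|^2\geq 0,
\end{equation*}
which proves $\underline{\chi}(R)\geq 0$.

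Next I would analyze the equality case. From the displayed chain, $\underline{\chi}(R)=0$ forces both $|\vec w_+-\vec w_-|^2=0$, i.e. $W_+=W_-$, and the equality $|C|^2=3u^2+\langle\vec w_+,\vec w_-\rangle=3u^2+|W_\pm|^2$ in \Cref{prop:boundC}(i). So I need to trace back through the proof of \Cref{prop:boundC} to see what saturates that bound. Using the Finsler--Thorpe Trick with $k=0$, there is $t\in\R$ with $R+t*\succeq 0$ or $-(R+t*)\succeq 0$ (allowing the sign depending on which inequality holds globally — but since both $\sec_R\geq k$ or $\sec_R\leq k$ could occur, I must be slightly careful: actually here $\sec_{\pm R}\geq 0$ is stronger and implies $\sec_R\geq 0$ \emph{and} $\sec_{-R}\geq 0$? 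No — $\sec_{\pm R}\geq 0$ literally says $\sec_R\geq 0$ and $\sec_{-R}\geq 0$, hence $\sec_R\equiv 0$, which would be too strong; so the intended reading must be that for each plane $\Pi$, either $\sec_R(\Pi)\geq 0$ for all $\Pi$, or $\sec_R(\Pi)\leq 0$ for all $\Pi$, i.e. $R$ has $\sec\geq 0$ or $-R$ has $\sec\geq 0$.) I would then note that \Cref{prop:boundC}(ii) with the optimal $t$ gives $|C|^2\leq 3u^2-3t^2+\langle\vec w_+,\vec w_-\rangle$, so equality in (i) forces $t=0$, i.e. $\pm R\succeq 0$ with the sign matching $\sec$. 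Moreover equality in \Cref{lem:niceinequality} must hold with $\lambda_i=u+w_i^+$, $\mu_i=u+w_i^-$ (after the $t=0$ substitution); since we've also shown $W_+=W_-$, these sequences coincide, so the bound $|C|^2\leq\sum\lambda_i\mu_i=\sum(u+w_i^+)^2$ is being saturated — and I should confirm that equality is consistent, which it is (take $C=\pm\diag(u+w_1^+,u+w_2^+,u+w_3^+)$). Conversely, if $\pm R\succeq 0$, $W_+=W_-$, and $|C|^2=3u^2+|W_\pm|^2$, then plugging into \eqref{eq:chi} gives $\underline{\chi}(R)=0$ directly.

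Finally, for the Einstein case $C=0$: then $|C|^2=0=3u^2+|W_\pm|^2$ forces $u=0$ and $W_+=W_-=0$, and since $R_{\wedge^4}$ is irrelevant to $\underline{\chi}$ but an \emph{Einstein} operator has $R_{\mathcal L}=0$ too, every block of $R$ in \eqref{eq:curvop4} vanishes, hence $R=0$. I expect the only delicate point to be the equality analysis: I must be careful to separate the three independent sources of slack — the inequality in \Cref{lem:derangement} (which needs $W_+=W_-$ to be tight, since with distinct strictly-ordered eigenvalues the identity permutation is the \emph{unique} maximizer), the passage through Birkhoff--von Neumann and the extreme-point reduction in \Cref{lem:niceinequality}, and the choice of $t$ — and argue each is forced, ideally by invoking \Cref{prop:boundC}(ii) to pin down $t=0$ cleanly rather than re-deriving the full proof. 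The rest is routine substitution into \eqref{eq:chi}.
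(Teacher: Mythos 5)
Your proof is correct and follows essentially the same route as the paper: Finsler--Thorpe plus the traceless Ricci bound of \Cref{prop:boundC}, yielding $8\,\underline\chi(R)\ge|\vec w_+-\vec w_-|^2$ and then tracing equality. The only difference is presentational: the paper applies \Cref{prop:boundC}(ii) directly and retains the $6t^2$ term in the chain, so $t=0$ falls out of the final $\ge 0$ immediately, whereas you first discard $t$ via part (i) and then re-invoke part (ii) at equality to force $t=0$ — slightly more circuitous, but equally valid.
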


\begin{proof}
As before, let $t\in\R$ be such that $\pm R+t\,*\succeq0$, see Finsler--Thorpe's Trick (Lemma~\ref{lem:finslerthorpe}). By Proposition~\ref{prop:boundC} (ii) and \eqref{eq:chi}, we obtain
\begin{align*}
      8\underline{\chi}(R) &= 6u^2+|W_+|^2+|W_-|^2 -2|C|^2 \\
         & \geq 6t^2+|\vec w_+|^2+|\vec w_-|^2 -2\langle \vec w_+,\vec w_-\rangle \\
        &=6t^2 + \big|\vec w_+ -\vec w_-\big|^2\geq0.
\end{align*}
Moreover, $\underline{\chi}(R)=0$ if and only if equality holds in all above inequalities.
\end{proof}

\section{Lower bounds}\label{sec:lower}

In this section, we establish a (pointwise) lower bound for the quadratic form $I_\lambda$ in the curvature operator of an oriented $4$-manifold $M$ that integrates to
\begin{equation}\label{eq:chi-1/lambdasigma}
\chi(M)-\tfrac{1}{\lambda}\sigma(M), \quad \lambda>0,
\end{equation}
see \eqref{eq:ilambda} for details. Given $0<\delta\leq1$, this lower bound gives sufficient conditions on $\lambda$ for the integrand $I_\lambda$ to be nonnegative on $\delta$-pinched curvature operators,
hence for \eqref{eq:chi-1/lambdasigma} to be nonnegative if $(M^4,\g)$ is $\delta$-pinched, see~Theorem~\ref{thm:estimates-general}. Combined with Theorem~\ref{thm:ville}, this yields Theorem~\ref{mainthm:Lambda} in the Introduction.

First, we focus on the particular case $\lambda=\tfrac12$, to demonstrate the optimization arguments used in the general case more concretely, 
and simplify the exposition for readers mainly interested in a self-contained proof of Theorem~\ref{mainthm:classification}, given below.

\begin{theorem}\label{thm:estimates-baby}
If $(M^4,\g)$ is a $\delta$-pinched oriented $4$-manifold, with finite volume and $\delta>\frac{-199+9\sqrt{545}}{71}\cong 0.156$, then $\chi(M)-2|\sigma(M)|>0$.
\end{theorem}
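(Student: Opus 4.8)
The plan is to reduce the global inequality $\chi(M)-2|\sigma(M)|>0$ to a pointwise statement about the Chern--Gauss--Bonnet and Hirzebruch integrands, and then prove that pointwise statement by optimizing over the Einstein simplex $\esimp^6$ of \Cref{prop:einsteinonethorpe}. By \eqref{eq:integrals}, it suffices to show that the quadratic form $I_{1/2}(R):=\underline{\chi}(R)-2\,\underline{\sigma}(R)$ is \emph{strictly} positive on all $\delta$-pinched algebraic curvature operators $R\in\Sym_b^2(\wedge^2\R^4)$; strict positivity of the integrand at every point plus positivity of the volume form integrates to the strict inequality for $\chi(M)-2\sigma(M)$, and replacing $M$ by $\overline M$ (which fixes $\underline\chi$, flips the sign of $\underline\sigma$, and preserves $\delta$-pinching) handles $-\sigma(M)$ as well, yielding $\chi(M)-2|\sigma(M)|>0$. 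Since $\underline\chi$ and $\underline\sigma$ are degree-$2$ homogeneous, we may as well work with positively $\delta$-pinched $R$; negatively pinched operators reduce to this case by $R\mapsto -R$. Explicitly, using \eqref{eq:chi} and \eqref{eq:sigma},
\begin{equation*}
8\,I_{1/2}(R)=6u^2+|W_+|^2+|W_-|^2-2|C|^2-\tfrac{16}{12}\big(|W_+|^2-|W_-|^2\big),
\end{equation*}
so the only obstruction to positivity is the $-2|C|^2$ term; the traceless Ricci part must be controlled.

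The key step is to eliminate the dependence on $C$ (equivalently on $R_{\mathcal L}$) using \Cref{prop:boundC}(ii). Since $R$ is positively $\delta$-pinched, the Finsler--Thorpe Trick gives $t_1\in\R$ with $R-\delta\id+t_1\,*\succeq0$, i.e.\ $R-k\id+t_1\,*\succeq0$ with $k=\delta$; \Cref{prop:boundC}(ii) then yields
\begin{equation*}
|C|^2\leq 3(u-\delta)^2-3t_1^2+\langle\vec w_+,\vec w_-\rangle.
\end{equation*}
Substituting this upper bound for $|C|^2$ into $8\,I_{1/2}(R)$ produces a quadratic form $Q$ in the variables $(w_1^+,w_2^+,w_1^-,w_2^-,u,t_1)$ — recall $w_3^\pm=-w_1^\pm-w_2^\pm$, so $W_\pm$ are determined by the first two eigenvalues — that is a \emph{lower bound} for $8\,I_{1/2}$. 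By \Cref{prop:einsteinonethorpe}, the set of admissible tuples $(\vec w_+,\vec w_-,u,t_1)$ is exactly the image of the $6$-simplex $\esimp^6$ under the linear injection $\iota_6$. Hence it suffices to show $\min_{\esimp^6}(Q\circ\iota_6)>0$, and this minimum is positive precisely when $\delta>\frac{-199+9\sqrt{545}}{71}$.

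To carry out the minimization I would apply the optimization machinery of \Cref{sec:optimization}: after negating, $-Q\circ\iota_6$ is a degree-$2$ polynomial on the simplex $\esimp^6=\conv(q_1,\dots,q_7)$, and by \Cref{lem:negdefface} its maximum is attained in the relative interior of a face $\conv(S)$, $S\subseteq\{q_1,\dots,q_7\}$, on whose affine hull $-Q\circ\iota_6$ restricts to a negative-definite form; one then runs through the recipe \eqref{eq:verticesnegdef}--\eqref{eq:max-max}, computing for each such $S$ the unique critical point $x_S$, checking $x_S\in\operatorname{relint}(\conv(S))$, and evaluating. The Hessian of $-Q\circ\iota_6$ should have only a small number of negative eigenvalues, so by \Cref{cor:smallsig} only low-dimensional faces $S$ need be examined, keeping the computation finite and tractable. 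Comparing the finitely many candidate values gives $\max_{\esimp^6}(-Q\circ\iota_6)$ as an explicit function of $\delta$; setting it $<0$ and solving produces the threshold $\delta>\frac{-199+9\sqrt{545}}{71}\cong 0.156$. The main obstacle is purely computational bookkeeping: identifying which faces of $\esimp^6$ carry the restricted form's maximum and correctly tracking the $\delta$-dependence of the vertices $q_j$ (which depend affinely on $\delta$) through the critical-point computations; there is no conceptual difficulty once \Cref{prop:boundC} has reduced the problem to quadratic programming on a simplex.
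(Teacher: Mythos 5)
Your proposal is correct and follows essentially the same route as the paper's own proof: reduce to strict pointwise positivity of $I_{1/2}(R)=\underline{\chi}(R)-2\,\underline{\sigma}(R)$ on $\dpinched$ (handling $|\sigma|$ by orientation reversal and negatively pinched $R$ by the sign-invariance of $I_{1/2}$), eliminate $|C|^2$ via \Cref{prop:boundC}(ii) with $k=\delta$ and the Finsler--Thorpe parameter $t_1$, pass through the Einstein projection to the augmented simplex $\esimp^6$ of \Cref{prop:einsteinonethorpe}, and minimize the resulting quadratic lower bound $Q_{1/2}$ over $\esimp^6$ using \Cref{lem:negdefface} and \Cref{cor:smallsig}. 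The only content you leave unexecuted is the actual face-by-face optimization (which the paper carries out explicitly, finding that the binding face as $\delta$ increases past the threshold is $\{q_1,q_3\}$, with $Q_{1/2}(x_{\{q_1,q_3\}})=\tfrac{71}{540}\delta^2+\tfrac{199}{270}\delta-\tfrac{16}{135}$ vanishing at $\delta=\tfrac{-199+9\sqrt{545}}{71}$), but you correctly identify this as finite bookkeeping given the machinery; no conceptual step is missing.
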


\begin{proof}[Proof of Theorem \ref{mainthm:classification}]
Since $(M^4,\g)$ is positively $\delta$-pinched, $\delta\geq\frac{1}{1+3\sqrt3}$, its intersection form is definite~\cite[Thm.~1]{DR19}. Up to reversing orientation, we assume it is positive-definite, i.e., $b_-(M)=0$, so $\chi(M)=2+b_+(M)$ and $\sigma(M)=b_+(M)\geq0$. By Theorem~\ref{thm:estimates-baby}, we have 
$0<\chi(M)-2|\sigma(M)| = 2-b_+(M)$,
so $b_+(M)=0$ or $1$. Therefore, by Theorem~\ref{thm:4Dmanifolds}, we conclude $M$ is homeomorphic to $S^4$ or $\C P^2$.
\end{proof}

\begin{proof}[Proof of Theorem~\ref{thm:estimates-baby}]
Given a $\delta$-pinched oriented $4$-manifold $(M^4,\g)$ with finite volume, up to reversing its orientation, we shall assume that $\sigma(M)\geq0$. Moreover, at each point $p\in M$, its curvature operator $R_p\in\Sym^2_b(\wedge^2\R^4)$ satisfies $\pm R\in\dpinched$, see \eqref{eq:dpinched}.
Writing $R$ in the canonical form \eqref{eq:curvop4}, we have from \eqref{eq:integrals}, \eqref{eq:chi}, and \eqref{eq:sigma}, that
\begin{equation}\label{eq:ville}
I_{\frac12}(R):=\underline{\chi}(R)-2\underline{\sigma}(R)=
\tfrac{3}{4}u^2-\tfrac{1}{24}|W_+|^2 +\tfrac{7}{24}|W_-|^2-\tfrac{1}{4}|C|^2
\end{equation}
satisfies $\int_M I_{\frac12}(R)\,\vol_\g=\chi(M)-2\sigma(M)$, and $I_{\frac12}(-R)=I_{\frac12}(R)$. Thus, it suffices to prove that 
\begin{equation}\label{eq:goal12}
\min_{R\in\dpinched} I_{\frac12}(R)>0, \; \text{ if } \; \delta>\tfrac{-199+9\sqrt{545}}{71}.
\end{equation}

Suppose $R\in\dpinched$, and let $t_1,t_2\in\R$ be as in \eqref{eq:t1t2bound}, see~Lemma~\ref{lem:finslerthorpe}. From Proposition~\ref{prop:boundC} (ii) with $k=\delta$ and $t=t_1$, we have:
\begin{equation}\label{eq:boundC-ours}
   |C|^2 \leq 3(u-\delta)^2-3t_1^2+\langle \vec w_+,\vec w_-\rangle.
 \end{equation}
Therefore, we may bound \eqref{eq:ville} from below using \eqref{eq:boundC-ours} as follows:
\begin{equation*}
I_{\frac12}(R)\geq\textstyle \tfrac{3}{4}u^2-\tfrac{1}{24}|W_+|^2 +\tfrac{7}{24}|W_-|^2 -\frac34(u-\delta)^2+\frac34 t_1^2-\frac14\langle \vec w_+,\vec w_-\rangle.
\end{equation*}
Moreover, using \eqref{eq:ww}, this lower bound can be written as the quadratic polynomial
\begin{align*}
Q_{\frac12}(w_1^+,w_2^+,w_1^-,w_2^-,u,t_1)  &:=\textstyle -\frac{1}{12}\big((w_1^+)^2 +(w_2^+)^2+w_1^+w_2^+ \big)\\
 &\quad\textstyle +\frac{7}{12}\big((w_1^-)^2 +(w_2^-)^2+w_1^-w_2^- \big)\\
 &\quad\textstyle  -\frac{1}{2}\big(w_1^+w_1^- +w_2^+w_2^- \big) -\frac{1}{4}\big(w_1^+w_2^- + w_2^+w_1^-\big) \\
 &\quad\textstyle +\frac{3}{4}t_1^2  +\frac{3}{2}\delta u-\frac{3}{4}\delta^2,
 \end{align*}
which depends solely on $t_1$ and the $5$ variables $w_1^+,w_2^+,w_1^-,w_2^-,u$ that determine $\pr(R)=R(\iota_5(w_1^+,w_2^+,w_1^-,w_2^-,u))\in\epinched$. Therefore, by Propositions \ref{prop:projd} and \ref{prop:einsteinonethorpe},  
\begin{equation}\label{eq:Qbound}
\min_{R\in\dpinched} \, I_{\frac12}(R)
 \; \geq \; \min_{x\in\esimp^6} \, Q_{\frac12}(x),
\end{equation}
where $\esimp^6=\conv(q_1,\dots,q_7)$ is the augmented Einstein simplex in Proposition~\ref{prop:einsteinonethorpe}.

In order to compute the minimum value of $Q_{\frac12}\colon\R^6\to\R$ on $\esimp^6$, we apply the optimization method discussed in Section~\ref{subsec:optsimplex} to maximize $-Q_{\frac12}$ on $\esimp^6$. The first step is to compute the collection $\mathcal S$ of subsets $S\subset \{q_1,\dots,q_7\}$ on whose affine hull $\aff(S)$ the restriction of $Q_{\frac12}$ is positive-definite, see \eqref{eq:verticesnegdef}. 
Note that the restriction of $Q_{\frac12}\colon\R^6\to\R$ to any such affine subspace $\aff(S)\subset\R^6$ is positive-definite for \emph{some} $0<\delta<1$ if and only if it is positive-definite for \emph{all} $0<\delta<1$. Indeed, from Proposition~\ref{prop:einsteinonethorpe}, each coordinate of $q_j$ is a scalar multiple of $(1-\delta)$, except for the $u$-coordinate, on which $Q_{\frac12}$ has no degree 2 term. Therefore, the eigenvalues of $\operatorname{Hess}\!\big(Q_{\frac12}|_{\aff(S)}\big)$ are scalar multiples of $(1-\delta)^2$.

A direct computation shows that $\operatorname{Hess} Q_{\frac12}$ has eigenvalues
\begin{equation*}
\textstyle
\frac{2}{3},\; 2,\; \frac{3}{2},\; -\frac{1}{2}, \; -\frac{1}{6}, \; 0.
\end{equation*}
As there are $d=3$ positive eigenvalues, by Corollary~\ref{cor:smallsig}, it suffices to consider subsets $S$ consisting of at most $4$ vertices, i.e., such that the face $\conv(S) \subset \esimp^6$ has dimension $\leq 3$. 

All $0$-dimensional faces of $\esimp^6$, i.e., singletons $S=\{q_j\}$, $1\leq j \leq 7$, trivially belong to $\mathcal S$. 
Regarding $1$-dimensional faces, it is straightforward to verify that $18$ of the $21= \binom{7}{2}$ subsets $S=\{q_{j_1},q_{j_2}\}$ of 2 vertices belong to $\mathcal S$; namely, all except for $\{q_1,q_2\}$, $\{q_1,q_7\}$ and $\{q_2,q_6\}$. For instance, the Hessian $1\times 1$-matrix of the restriction of $Q_{\frac12}$ to $\aff(q_1,q_2)$ is $-\frac{1}{18}(1-\delta)^2$, while to $\aff(q_1,q_3)$ it is $\frac{10}{3}(1-\delta)^2$.
Similarly, concerning $2$-dimensional faces, $18$ of the $35= \binom{7}{3}$ subsets of 3 vertices $S=\{q_{j_1},q_{j_2},q_{j_3}\}$ belong to $\mathcal S$; namely, 
\begin{equation}\label{eq:2facesinS}
\begin{array}{cccccc}
\{q_1,q_3,q_4\}, &\!\!\! \{q_1,q_3,q_5\}, &\!\!\! \{q_1,q_3,q_6\}, &\!\!\! \{q_1,q_4,q_5\}, &\!\!\! \{q_1,q_4,q_6\}, &\!\!\! \{q_2,q_3,q_4\},\\[3pt]
\{q_2,q_3,q_5\}, &\!\!\! \{q_2,q_3,q_7\}, &\!\!\! \{q_2,q_4,q_5\}, &\!\!\! \{q_3,q_4,q_5\}, &\!\!\! \{q_3,q_4,q_6\}, &\!\!\! \{q_3,q_4,q_7\},\\[3pt]
\{q_3,q_5,q_6\}, &\!\!\! \{q_3,q_5,q_7\}, &\!\!\! \{q_3,q_6,q_7\}, &\!\!\! \{q_4,q_5,q_6\}, &\!\!\! \{q_4,q_5,q_7\}, &\!\!\! \{q_4,q_6,q_7\}.
\end{array}
\end{equation}
For example, the Hessian $2\times 2$-matrix of the restrictions of $Q_{\frac12}$ to $\aff(q_1,q_2,q_3)$ and $\aff(q_1,q_3,q_4)$ have eigenvalues 
$\tfrac{59\pm\sqrt{3737}}{36}(1-\delta)^2$ and $\tfrac{123\pm\sqrt{8473}}{36}(1-\delta)^2$, respectively. Finally, in regard to $3$-dimensional faces, 6 of the $35=\binom{7}{4}$ subsets of 4 vertices $S=\{q_{j_1},q_{j_2},q_{j_3},q_{j_4}\}$ are in $\mathcal S$; namely
\begin{equation}\label{eq:3facesinS}
\begin{array}{ccc}
\{q_1,q_3,q_4,q_5\},  &\{q_1,q_3,q_4,q_6\}, & \{q_2,q_3,q_4,q_5\}, \\[3pt]
\{q_3,q_4,q_5,q_6\}, & \{q_3,q_4,q_5,q_7\}, & \{q_3,q_4,q_6,q_7\}.
\end{array}
\end{equation}
For instance, the Hessian $3\times 3$-matrix of the restriction of $Q_{\frac12}$ to $\aff(q_1,q_2,q_3,q_4)$ and $\aff(q_1,q_3,q_4,q_5)$ have eigenvalues
\begin{equation*}
\tfrac{\alpha_1}{18}(1-\delta)^2, \; \tfrac{\alpha_2}{18}(1-\delta)^2, \; \tfrac{\alpha_3}{18}(1-\delta)^2, \quad\text{and}\quad  \tfrac{\beta_1}{18}(1-\delta)^2, \; \tfrac{\beta_2}{18}(1-\delta)^2, \; \tfrac{\beta_3}{18}(1-\delta)^2,
\end{equation*}
respectively, where $\alpha_1\cong 109.22$, $\alpha_2\cong20.12$, and $\alpha_3\cong -7.34$ are the roots of the polynomial $x^3-122x^2+1248x+16128$, and $\beta_1\cong 134.65$, $\beta_2\cong 19.27$, and $\beta_3\cong 13.06$ are the roots of the polynomial $x^3-167x^2+4608x-33936$.

The second step in the optimization procedure is to compute the unique critical point $x_S\in\aff(S)$ of $Q_{\frac12}|_{\aff(S)}$ for each of the above 49 subsets $S\in\mathcal S$, and build the subcollection $\mathcal S'\subset\mathcal S$ consisting of the $S\in\mathcal S$ such that $x_S$ is in the relative interior of the face $\conv(S)$, see \eqref{eq:verticesnegdefint}.
Differently from the above, this step \emph{depends} on the value of $0<\delta<1$, and several $S\in\mathcal S$ only join the collection $\mathcal S'$ for $\delta>0$ sufficiently small. Explicitly parametrizing each face $\conv(S)$ for $S\in\mathcal S$ with a standard simplex, and solving the corresponding inequalities in $\delta$ to determine if $x_S\in\textnormal{relint}(\conv(S))$, we compute the conditions for which $S\in\mathcal S'$ and the corresponding list of values $Q_{\frac12}(x_S)$ where the minimum of $Q_{\frac12}|_{\aff(S)}$ is achieved.

All singletons $S=\{q_j\}$ trivially belong to $\mathcal S'$ for all $0<\delta<1$, and have $x_S=q_j$. The value $Q_{\frac12}(x_S)=Q_{\frac12}(q_j)$ for each of these points is listed in Table~\ref{tab:0faces}.
\begin{table}[!ht]
\begin{tabular}{|c|l|}
\hline
$S$ & $Q_{\frac12}(x_S)\phantom{\Big|}$  \\[2pt]
\hline
$\{q_1\}$ & $\frac{2 }{9}\delta^2+\frac{5}{9}\delta-\frac{1}{36}$ \rule[4pt]{0pt}{8pt} \\[3pt]
$\{q_2\}$ & $-\frac{1}{36}\delta^2+\frac{5}{9}\delta+\frac{2}{9}$ \rule[4pt]{0pt}{8pt} \\[3pt]
$\{q_3\}$ & $\frac{10 }{9}\delta^2-\frac{11}{9}\delta+\frac{31}{36}$ \rule[4pt]{0pt}{8pt} \\[3pt]
$\{q_4\}$ & $\frac{31 }{36}\delta^2-\frac{11}{9}\delta+\frac{10}{9}$ \rule[4pt]{0pt}{8pt} \\[3pt]
\hline
\end{tabular}
\begin{tabular}{|c|l|}
\hline
$S$ & $Q_{\frac12}(x_S)\phantom{\Big|}$  \\[2pt]
\hline
$\{q_5\}$ & $\frac{3}{4}$  \rule[4pt]{0pt}{8pt} \\[6pt]
$\{q_6\}$ & $\frac{3}{4}$  \rule[4pt]{0pt}{8pt} \\[7pt]
$\{q_7\}$ & $\frac{3\delta^2}{4}$ \\[15pt]
\hline
\end{tabular}
\caption{Values of $Q_{\frac12}$ on the $0$-dimensional faces of $\esimp^6$.}\label{tab:0faces}
\end{table}

In 12 of the 18 subsets $S\in \mathcal S$ with $2$ vertices, the critical point $x_S\in\aff(S)$ of $Q_{\frac12}|_{\aff(S)}$ lies in the relative interior of the $1$-dimensional face $\conv(S)$ for some value of $0<\delta<1$, as listed in Table~\ref{tab:1faces}. For instance, the critical point of $Q_{\frac12}|_{\aff(q_1,q_3)}$ is $x_{\{q_1,q_3\}} =\frac{23}{30}\,q_1+\frac{7}{30}\,q_3$, which clearly lies in $\textnormal{relint}(\conv(q_1,q_3))$ for all $0<\delta<1$. Meanwhile, the critical point of $Q_{\frac12}|_{\aff(q_1,q_4)}$ is $x_{\{q_1,q_4\}}=\frac{52-43\delta}{63(1-\delta)}\,q_1+\frac{11-20\delta}{63(1-\delta)}\,q_4$, which lies in $\textnormal{relint}(\conv(q_1,q_4))$ if and only if $0<\delta<\frac{11}{20}$.

\begin{table}[!ht]
\begin{tabular}{|c|l|c|}
\hline
$S$ & $Q_{\frac12}(x_S)$ & $\phantom{\Big|}\delta_S\phantom{\Big|}$ \\[2pt]
\hline
$\{q_1,q_3\}$ & $\frac{71}{540}\delta^2+\frac{199}{270} \delta -\frac{16}{135} $ & $1$\rule[4pt]{0pt}{8pt} \\[3pt]
$\{q_1,q_4\}$ & $\frac{26}{567}\delta^2+\frac{425}{567} \delta -\frac{46}{567}$ & $\frac{11}{20}$ \\[3pt] 
$\{q_1,q_5\}$ & $-\frac{9}{44}\delta^2+\frac{9}{11} \delta -\frac{3}{44}$ & $\frac{4}{13}$ \\[3pt] 
$\{q_2,q_3\}$ & $-\frac{46}{567}\delta^2+\frac{425}{567} \delta +\frac{26}{567}$ & $\frac{43}{52}$ \\[3pt] 
$\{q_2,q_4\}$ & $-\frac{35}{108}\delta^2+\frac{31}{27} \delta -\frac{2}{27}$ & $1$ \\[3pt]
$\{q_2,q_5\}$ & $-\frac{36}{71}\delta^2+\frac{90}{71} \delta -\frac{3}{71}$ & $\frac{26}{35}$ \\[3pt]
\hline
\end{tabular}
\begin{tabular}{|c|l|c|}
\hline
$S$ & $Q_{\frac12}(x_S)$ & $\phantom{\Big|}\delta_S\phantom{\Big|}$ \\[2pt]
\hline
$\{q_3,q_4\}$ & $ \frac{70}{93}\delta^2-\frac{77}{93}\delta+\frac{70}{93}$ & $\frac{11}{20}$\rule[4pt]{0pt}{8pt} \\[3pt]
$\{q_3,q_5\}$ & $\frac{21}{40}$ & $\frac{11}{20}$ \\[3pt]
$\{q_3,q_6\}$ & $-\frac{9}{76}\delta^2+\frac{9}{19}\delta+\frac{21}{76}$ & $\frac{20}{29}$ \\[3pt] 
$\{q_4,q_5\}$ & $\frac{21}{31}$ & $\frac{22}{31}$ \\[3pt] 
$\{q_4,q_6\}$ & $-\frac{36}{103}\delta^2+\frac{90 }{103}\delta +\frac{21}{103}$ & $\frac{58}{67}$ \\[3pt] 
$\{q_5,q_6\}$ & $-\frac{3}{4}\delta^2+\frac{3}{2}\delta $ & $1$ \\[3pt]
\hline
\end{tabular}
\caption{Minimum of $Q_{\frac12}|_{\aff(S)}$, attained at $x_S\in\aff(S)$, which is in the relative interior of $\conv(S)$ if and only if $0<\delta<\delta_S$, for each $S\in\mathcal S$ such that $\conv(S)$ is a $1$-dimensional face of $\esimp^6$. If $x_S\notin\textnormal{relint}(\conv(S))$ for all $0<\delta<1$, then the corresponding entry $S\in\mathcal S$  is suppressed.}\label{tab:1faces}
\end{table}

Among the 18 subsets $S\in\mathcal S$ with $3$ vertices, listed in \eqref{eq:2facesinS},
only 9 are such that the critical point $x_S$ of $Q_{\frac12}|_{\aff(S)}$ lies in the relative interior of the $2$-dimensional face $\conv(S)$ for some value of $0<\delta<1$, as listed in Table~\ref{tab:2faces}. For example, the critical point of the restriction of $Q_{\frac12}$ to $\aff(q_1,q_3,q_4)$ is
$$\textstyle x_{\{q_1,q_3,q_4\}}=\frac{3 (212- 191 \delta)}{832 (1-\delta)} \, q_1+\frac{19 \delta+188}{832(1- \delta)} \, q_3+\frac{4-139 \delta}{416 (1-\delta)}\, q_4,$$
which lies in the relative interior of $\conv(q_1,q_3,q_4)$ if and only if $0<\delta<\frac{4}{139}$.

\begin{table}[!ht]
\begin{tabular}{|c|l|c|}
\hline
$S$ & $Q_{\frac12}(x_S)$ & $\phantom{\Big|}\delta_S\phantom{\Big|}$ \\[2pt]
\hline
$\{q_1,q_3,q_4\}$ & $\frac{227}{4992}\delta^2+\frac{463 }{624}\delta-\frac{37}{312}$ & $\frac{4}{139}$ \rule[4pt]{0pt}{8pt}\\[3pt]
$\{q_1,q_3,q_5\}$ & $-\frac{45}{202}\delta^2+\frac{153}{202}\delta-\frac{12}{101}$ & $\frac{4}{139}$\rule[4pt]{0pt}{8pt} \\[3pt]
$\{q_1,q_4,q_5\}$ & $\frac{1075503}{1763584}\delta^2+\frac{439605}{881792}\delta-\frac{162729}{1763584}$ & $\frac{43}{439}$ \rule[4pt]{0pt}{8pt}\\[3pt]
$\{q_2,q_3,q_4\}$ & $\frac{15719}{14884}\delta^2+\frac{1601 }{3721}\delta-\frac{121}{3721}$ & $\frac{4}{31}<\delta<\frac{23}{41}$\rule[4pt]{0pt}{8pt} \\[3pt]
$\{q_2,q_3,q_5\}$ & $-\frac{1137329}{2883204}\delta^2+\frac{6550079}{5766408}\delta-\frac{2782793}{46131264}$ & $\frac{349}{844}$\rule[4pt]{0pt}{8pt} \\[3pt]
$\{q_2,q_4,q_5\}$ & $\frac{410727}{1201216}\delta^2+\frac{202671}{300304}\delta+\frac{10935}{300304}$ & $\frac{10}{37}$ \rule[4pt]{0pt}{8pt}\\[3pt]
$\{q_3,q_4,q_6\}$ & $\frac{44103 }{153664}\delta^2+\frac{4329 }{10976} \delta +\frac{423}{3136}$ & $\frac{7}{13}$ \rule[4pt]{0pt}{8pt}\\[3pt]
$\{q_3,q_5,q_6\}$ & $-\frac{2321}{7056}\delta^2+\frac{353}{504}\delta+\frac{31}{144} $ & $\frac{7}{13}$\rule[4pt]{0pt}{8pt} \\[3pt]
$\{q_4,q_5,q_6\}$ & $\frac{2091}{12544}\delta^2+\frac{57}{224}\delta+\frac{3}{16}$ & $\frac{28}{43}$ \rule[4pt]{0pt}{8pt}\\[3pt]
\hline
\end{tabular}
\caption{Minimum of $Q_{\frac12}|_{\aff(S)}$, attained at $x_S\in\aff(S)$, which is in the relative interior of $\conv(S)$ if and only if $0<\delta<\delta_S$, for each $S\in\mathcal S$ such that $\conv(S)$ is a $2$-dimensional face of $\esimp^6$; except for $S=\{q_2,q_3,q_4\}$, for which $x_S\in\textnormal{relint}(\conv(S))$ if and only if $\frac{4}{31}<\delta<\frac{23}{41}$. If $x_S\notin\textnormal{relint}(\conv(S))$ for all $0<\delta<1$, then the corresponding entry $S\in\mathcal S$  is suppressed.}\label{tab:2faces}
\end{table}

Lastly, of the 6 subsets $S\in\mathcal S$ with $4$ vertices, see \eqref{eq:3facesinS}, only $S=\{q_2,q_3,q_4,q_5\}$ is such that the critical point $x_S$ of $Q_{\frac12}|_{\aff(S)}$ lies in the relative interior of the $3$-dimensional face $\conv(S)$ for some $0<\delta<1$. Namely, we have that
$$\textstyle
x_{\{q_2,q_3,q_4,q_5\}}=\frac{63 (19-20 \delta)}{1846 (1-\delta)} \, q_2 +\frac{9 (57 \delta-8)}{923 (1-\delta)} \, q_3 +\frac{27 (19-20 \delta)}{1846 (1-\delta)} \, q_4+\frac{4 (35-134 \delta)}{923 (1-\delta)} \, q_5
$$
is in the relative interior of $\conv(q_2,q_3,q_4,q_5)$ if and only if $\frac{8}{57}<\delta<\frac{35}{134}$, and 
\begin{equation}\label{eq:3faceValue}
Q_{\frac12}\big(x_{\{q_2,q_3,q_4,q_5\}}\big)=\textstyle\frac{634359 }{851929}\delta^2+\frac{372771}{851929}\delta-\frac{309393}{13630864}.
\end{equation}

Altogether, it follows that the minimum of $Q_{\frac12}\colon\esimp^6\to\R$ is equal to the smallest $Q_{\frac12}(x_S)$ among the $S\in\mathcal S$ that are in the subcollection $\mathcal S'$ for the given value of $\delta$, as listed in Tables~\ref{tab:0faces} to \ref{tab:2faces} and \eqref{eq:3faceValue}. By direct inspection, setting $\delta=\frac{-199+9\sqrt{545}}{71}$, all $Q_{\frac12}(x_S)$ for which $S\in\mathcal S'$ are strictly positive, except for $Q_{\frac12}\big(x_{\{q_1,q_3\}}\big)=0$. Furthermore, subsets $S\in\mathcal S$ only enter or leave the subcollection $\mathcal S'$ at one of \emph{finitely many} possible values of $\delta$, so provided $\varepsilon>0$ is sufficiently small,
\begin{equation*} 
\min_{x\in\esimp^6} Q_{\frac12}(x) = Q_{\frac12}\big(x_{\{ q_1,q_3\}}\big) = \textstyle\frac{71}{540}\delta^2+\frac{199}{270} \delta -\frac{16}{135}, \; \text{for all}\; \textstyle \left|\delta-\frac{-199+9\sqrt{545}}{71}\right|<\varepsilon,
\end{equation*}
and, linearizing this quadratic polynomial at $\delta=\frac{-199+9\sqrt{545}}{71}$, one easily  sees that $\min\limits_{x\in\esimp^6} Q_{\frac12}(x)>0$ for $\frac{-199+9\sqrt{545}}{71}<\delta<\frac{-199+9\sqrt{545}}{71}+\varepsilon$.
The above, combined with the fact that $\Delta_\delta^6\subset\Delta_{\delta'}^6$ if $\delta'<\delta$ and hence $\min\limits_{x\in\esimp^6} Q_{\frac12}(x)$ is a monotonically increasing function of $0<\delta<1$, implies that
\begin{equation*}
\min_{x\in\esimp^6} Q_{\frac12}(x) > 0, \quad \text{ for all }\quad \delta>\tfrac{-199+9\sqrt{545}}{71}.
\end{equation*}
The above inequality and \eqref{eq:Qbound} imply \eqref{eq:goal12}, concluding the proof.
\end{proof}

We now proceed to the case of general $\lambda>0$, leading to Theorem~\ref{mainthm:Lambda} in the Introduction. The method of proof follows the same outline  of Theorem~\ref{thm:estimates-baby}.

\begin{theorem}\label{thm:estimates-general}
If $(M^4,\g)$ is a $\delta$-pinched oriented $4$-manifold with finite volume, then
\begin{equation*}
|\sigma(M)|\leq \lambda^*(\delta)\, \chi(M),
\end{equation*}
where $\lambda^*(\delta)$ is the continuously differentiable function given by
\begin{equation}\label{eq:lambda*} 
\lambda^*(\delta)=\begin{cases}
 \dfrac{\sqrt{\frac{24}{\delta}+8 -8 \delta+\delta^2}+\delta-4 }{6 (3-\delta)},
& \text{ if } 0<\delta<\delta_1^*,  \\[10pt]
\dfrac{4 }{3 \sqrt{15}} \dfrac{1-\delta}{\sqrt{\delta (\delta+2)}},  &  \text{ if } \delta_1^*\leq \delta<\delta_2^*, \\[10pt]
\dfrac{8(1-\delta)^2}{24 \delta^2-12 \delta+15}, & \text{ if } \delta_2^*\leq \delta\leq 1,
\end{cases}
\end{equation}
and 
\begin{enumerate}[\rm (i)]
\item $\delta_1^*\cong0.069$ is the smallest real root of the polynomial $2\delta^3-40\delta^2+89\delta-6$,
\item $\delta_2^*=4-\frac{3\sqrt6}{2}\cong 0.326$.
\end{enumerate}
\end{theorem}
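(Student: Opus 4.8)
The plan is to follow the scheme of \Cref{thm:estimates-baby}, now carrying $\lambda$ as a second parameter alongside $\delta$. By the integral formulas \eqref{eq:integrals} — which hold, with $\sigma(M)=\sigma_{(2)}(M,\g)$ when $M$ is noncompact of finite volume, by \Cref{subsec:integrals} — it suffices to establish the pointwise inequality $|\underline{\sigma}(R)|\le\lambda^*(\delta)\,\underline{\chi}(R)$ for every $\delta$-pinched algebraic curvature operator $R$. Since \eqref{eq:chi}--\eqref{eq:sigma} are quadratic, $I_\lambda(-R)=I_\lambda(R)$ where $I_\lambda(R):=\underline{\chi}(R)-\tfrac1\lambda\,\underline{\sigma}(R)$, and since reversing orientation negates $\underline{\sigma}$ and fixes $\underline{\chi}\ge0$ (by \Cref{cor:alg_hopf_quest}), this pointwise inequality is equivalent to $I_\lambda(R)\ge0$ for all $R\in\dpinched$, taking $\lambda=\lambda^*(\delta)$. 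Given $R\in\dpinched$, I would pick $t_1\in\R$ with $R-\delta\,\id+t_1\,*\succeq0$ by the Finsler--Thorpe Trick (\Cref{lem:finslerthorpe}) and apply \Cref{prop:boundC}(ii) with $k=\delta$ and $t=t_1$ to bound $|C|^2$; substituting into \eqref{eq:chi}--\eqref{eq:sigma}, the $u^2$-terms cancel, yielding
\begin{equation*}
I_\lambda(R)\ \ge\ Q_\lambda\big(w_1^+,w_2^+,w_1^-,w_2^-,u,t_1\big):=\tfrac32\delta u-\tfrac34\delta^2+\tfrac34 t_1^2+a\,|W_+|^2+b\,|W_-|^2-\tfrac14\langle\vec w_+,\vec w_-\rangle,
\end{equation*}
with $a=\tfrac18-\tfrac1{12\lambda}$, $b=\tfrac18+\tfrac1{12\lambda}$, and $w_3^\pm=-w_1^\pm-w_2^\pm$. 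As $Q_\lambda$ depends only on $\pr(R)=R(\vec w_+,\vec w_-,u)\in\epinched$ and $t_1$, \Cref{lem:projecteinsteinpsd,prop:einsteinonethorpe} place $(\vec w_+,\vec w_-,u,t_1)$ in $\iota_6(\esimp^6)$, reducing the claim to $\min_{x\in\esimp^6}Q_\lambda(x)\ge0$ for $\lambda=\lambda^*(\delta)$.

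To compute this minimum I would run the optimization method of \Cref{subsec:optsimplex} on $-Q_\lambda$. A computation shows $\operatorname{Hess}Q_\lambda$ is block diagonal, with blocks $[\tfrac32]$ in $t_1$, $[0]$ in $u$, and $N\otimes M$ in $(w_1^+,w_2^+,w_1^-,w_2^-)$, where $M=\left(\begin{smallmatrix}2&1\\1&2\end{smallmatrix}\right)\succ0$ and $N=\left(\begin{smallmatrix}2a&-\frac14\\-\frac14&2b\end{smallmatrix}\right)$ has eigenvalues $\tfrac14\pm\sqrt{\tfrac1{36\lambda^2}+\tfrac1{16}}$, one positive and one negative for every $\lambda>0$; hence $\operatorname{Hess}Q_\lambda$ has exactly $d=3$ positive eigenvalues. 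By \Cref{cor:smallsig}, the maximum of $-Q_\lambda$ on $\esimp^6$ is attained on a face $\conv(S)$, $S\subseteq\{q_1,\dots,q_7\}$, of dimension $\le3$. For each such $S$ one checks whether $Q_\lambda|_{\aff(S)}$ is positive-definite — a condition on $\lambda$ alone, since every difference $q_i-q_j$ equals $(1-\delta)$ times a $\delta$-independent vector while the quadratic part of $Q_\lambda$ does not involve $\delta$ — and, when it is, computes the unique critical point $x_S\in\aff(S)$, the semialgebraic set of $(\delta,\lambda)$ for which $x_S\in\operatorname{relint}(\conv(S))$, and the value $Q_\lambda(x_S)$; then $\min_{\esimp^6}Q_\lambda$ is the least $Q_\lambda(x_S)$ over the $S$ active at $(\delta,\lambda)$.

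The condition $\min_{\esimp^6}Q_\lambda\ge0$ is thereby an explicit semialgebraic subset of $\{0<\delta<1,\ \lambda>0\}$, and I expect Cylindrical Algebraic Decomposition to exhibit its lower boundary in $\lambda$ as precisely the function \eqref{eq:lambda*}: the three branches should arise from three faces where $\min_{\esimp^6}Q_\lambda$ is realized. For $\delta_2^*\le\delta\le1$ this is the vertex $\{q_1\}$ (where $\varphi(p_1)=\tfrac{1-\delta}{3}R_{\C P^2}+\tfrac{4\delta-1}{3}R_{S^4}$), with $Q_\lambda(q_1)=\tfrac{(2\delta+1)^2}{12}+\tfrac{(1-\delta)^2}{3}-\tfrac{2(1-\delta)^2}{9\lambda}\ge0$ iff $\lambda\ge\tfrac{8(1-\delta)^2}{24\delta^2-12\delta+15}$. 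For $\delta_1^*\le\delta<\delta_2^*$ it is the edge $\{q_1,q_3\}$, with $Q_\lambda\big(x_{\{q_1,q_3\}}\big)=\tfrac{\delta(\delta+2)}{4}-\tfrac{4(1-\delta)^2}{135\lambda^2}\ge0$ iff $\lambda\ge\tfrac{4}{3\sqrt{15}}\tfrac{1-\delta}{\sqrt{\delta(\delta+2)}}$. For $0<\delta<\delta_1^*$ it is another (higher-dimensional) face, whose threshold equation $(9-3\delta)\lambda^2+(4-\delta)\lambda-\tfrac2{3\delta}=0$ solves to the first branch. The orientation-reversed faces — e.g.\ $\{q_3\}$, with $Q_\lambda(q_3)=\tfrac{(2\delta+1)^2}{12}+\tfrac{(1-\delta)^2}{3}+\tfrac{2(1-\delta)^2}{9\lambda}>0$ — never bind, since their $Q_\lambda$-values decrease in $\lambda$ toward a positive limit. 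The breakpoints are where the binding face changes: $\delta_2^*=4-\tfrac{3\sqrt6}{2}$, the root in $(0,1)$ of $2\delta^2-16\delta+5$, is where $x_{\{q_1,q_3\}}$ reaches the vertex $q_1$ (equivalently, where $\lambda^*=\tfrac4{15}$), and $\delta_1^*$, the smallest real root of $2\delta^3-40\delta^2+89\delta-6$, is the analogous transition between the first two branches. Since each transition occurs where the critical point of $Q_\lambda$ on the binding face reaches a lower-dimensional sub-face, the pieces of $\lambda^*$ match to first order there, so $\lambda^*$ is continuously differentiable, completing the proof.

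The main obstacle is the two-parameter bookkeeping. One must carry $\lambda$ through the $\binom{7}{1}+\binom{7}{2}+\binom{7}{3}+\binom{7}{4}=98$ candidate subsets $S$, decide positive-definiteness of $Q_\lambda|_{\aff(S)}$ and relative-interior containment of $x_S$ as semialgebraic conditions in $(\delta,\lambda)$, and assemble the resulting envelope via Cylindrical Algebraic Decomposition — structurally identical to, but considerably heavier than, the one-parameter computation in \Cref{thm:estimates-baby}. The two further points requiring care are confirming that no mirror face (or higher-dimensional face) ever supersedes the three branches above, and verifying the first-order matching of the branches at $\delta_1^*$ and $\delta_2^*$.
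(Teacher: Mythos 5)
Your proposal reproduces the paper's proof of \Cref{thm:estimates-general} essentially step for step: the reduction to $\min_{\esimp^6}Q_\lambda\ge0$ via \Cref{prop:boundC} and \Cref{prop:einsteinonethorpe}, the observation that $\operatorname{Hess}Q_\lambda$ has exactly $d=3$ positive eigenvalues (your Kronecker-product factorization $N\otimes M$ is a cleaner way to see the eigenvalue list the paper writes down), the $\le3$-dimensional face enumeration of \Cref{subsec:optsimplex}, and the final Cylindrical Algebraic Decomposition of the feasible set in the $(\delta,\lambda)$-strip. Your identification of $\{q_1\}$ and $\{q_1,q_3\}$ as the binding faces for the third and second branches agrees with Tables~\ref{tab:0faces-lambda} and~\ref{tab:1faces-lambda}, your breakpoint computations at $\delta_2^*$ check out, and the bookkeeping you flag as remaining (the $\sim\!100$-face sweep, the first-branch face, and the $C^1$-matching at $\delta_1^*,\delta_2^*$) is precisely what the paper's proof carries out via CAD.
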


\begin{remark}
The above semialgebraic function $\lambda^*$ is $C^1$, but not $C^2$.
\end{remark}

\begin{proof}
Just as in the proof of Theorem~\ref{thm:estimates-baby}, up to reversing orientation, we may assume $\sigma(M)\geq0$; and, at every point $p\in M$, we have that $\pm R\in\dpinched$. Writing $R$ in the canonical form \eqref{eq:curvop4}, we have from \eqref{eq:integrals}, \eqref{eq:chi}, and \eqref{eq:sigma}, that for all~$\lambda>0$,
\begin{equation}\label{eq:ilambda}
\begin{aligned}
I_\lambda(R)&:=\underline{\chi}(R)-\tfrac{1}{\lambda}\underline{\sigma}(R)\\
&\phantom{:}= \tfrac{3}{4}u^2+\left(\tfrac{1}{8}-\tfrac{1}{12\lambda}\right)|W_+|^2 +\left(\tfrac{1}{8}+\tfrac{1}{12\lambda}\right)|W_-|^2-\tfrac{1}{4}|C|^2
\end{aligned}
\end{equation}
satisfies $\int_M I_\lambda(R)\,\vol_\g=\chi(M)-\frac{1}{\lambda}\sigma(M)$, and $I_\lambda(-R)=I_\lambda(R)$. Thus, it suffices to prove that for all $0<\delta<1$,
\begin{equation}\label{eq:goal-lambda}
\min_{R\in\dpinched} I_\lambda(R)\geq0, \; \text{ if } \; \lambda=\lambda^*(\delta).
\end{equation}
Note that the conclusion holds in the trivial case $\delta=1$ and $\lambda^*(1)=0$, as $\sigma(M)=0$ if $(M^4,\g)$ is $1$-pinched, i.e., has constant curvature; so we shall assume $0<\delta<1$.

Given $R\in\dpinched$, let $t_1,t_2\in\R$ be as in \eqref{eq:t1t2bound}, see Lemma~\ref{lem:finslerthorpe}. Using Lemma~\ref{lem:niceinequality} and arguing exactly as in \eqref{eq:boundC-ours}, we may bound \eqref{eq:ilambda} from below:
\begin{equation*}
I_\lambda(R)\geq \tfrac{3}{4} u^2 +\left(\tfrac{1}{8}-\tfrac{1}{12\lambda}\right)|W_+|^2 +\left(\tfrac{1}{8}+\tfrac{1}{12\lambda}\right)|W_-|^2 -\tfrac34(u-\delta)^2+\tfrac34 t_1^2-\tfrac14\langle \vec w_+,\vec w_-\rangle,
\end{equation*}
and, using \eqref{eq:ww}, this lower bound can be written as the quadratic polynomial
\begin{equation*}
\begin{aligned}
Q_{\lambda}(w_1^+,w_2^+,w_1^-,w_2^-,u,t_1) &:=
\textstyle\left(\frac{1}{4}-\frac{1}{6\lambda}\right)\big((w_1^+)^2+(w_2^+)^2+w_1^+w_2^+\big)\\
&\quad+\textstyle\left(\frac{1}{4}+\frac{1}{6\lambda}\right)\big((w_1^-)^2+(w_2^-)^2+w_1^-w_2^-\big)\\
&\quad-\textstyle 
 \frac{1}{2}\big(w_1^+w_1^- +w_2^+w_2^-\big)
 -\frac{1}{4}\big(w_1^+w_2^- +w_2^+w_1^-\big)\\
 &\quad+\textstyle
 \frac{3}{4}t_1^2 +\frac{3}{2}\delta u-\frac{3}{4}\delta^2
 \end{aligned}
\end{equation*}
in $t_1$, and $w_1^\pm,w_2^\pm,u$, which determine $\pr(R)=R(\iota_5(w_1^+,w_2^+,w_1^-,w_2^-,u))\in\epinched$. Thus, analogously to \eqref{eq:Qbound}, Propositions \ref{prop:projd} and \ref{prop:einsteinonethorpe} imply that for all $\lambda>0$,
\begin{equation}\label{eq:Qbound-lambda}
\min_{R\in\dpinched} \, I_{\lambda}(R) \; \geq \; \min_{x\in\esimp^6} \, Q_{\lambda}(x),
\end{equation}
where $\esimp^6=\conv(q_1,\dots,q_7)$ is the augmented Einstein simplex in Proposition~\ref{prop:einsteinonethorpe}.

Once again, we apply the optimization method in Section~\ref{subsec:optsimplex} to maximize $-Q_\lambda$ on $\esimp^6$. 
The first step is to determine the collection $\mathcal S_\lambda$ of subsets $S\subset\{q_1,\dots,q_7\}$ on whose affine hull $\aff(S)$ the restriction of $Q_\lambda$ is positive-definite, see \eqref{eq:verticesnegdef}. As observed in the proof of Theorem~\ref{thm:estimates-baby}, since the coordinates of $q_j$ are scalar multiples of $(1-\delta)$, except for the $u$-coordinate, on which $Q_\lambda$ has no degree $2$ term, the eigenvalues of 
$\operatorname{Hess}\!\big(Q_{\lambda}|_{\aff(S)}\big)$ are scalar multiples of $(1-\delta)^2$ for any subset $S\subset\{q_1,\dots,q_7\}$. Thus, the restriction of $Q_\lambda\colon\R^6\to\R$ to $\aff(S)\subset\R^6$ is either positive-definite for every $0<\delta<1$, or for no $0<\delta<1$ at all. 
However, for a fixed $S$, the restriction $Q_\lambda|_{\aff(S)}$ may be positive-definite for some values of $\lambda>0$, and indefinite or negative-definite for other values of~$\lambda>0$. Thus, even though the collection $\mathcal S_\lambda$ is independent of $\delta$, it \emph{does} depend on $\lambda$.

A simple computation shows that $\operatorname{Hess} Q_{\lambda}$ has the following eigenvalues:
\begin{equation*}
\textstyle
\frac{3\lambda+\sqrt{9\lambda^2+4}}{12\lambda}, \; \frac{3\lambda+\sqrt{9\lambda^2+4}}{4\lambda},\; \frac{3}{2}, \; \frac{3\lambda-\sqrt{9\lambda^2+4}}{4\lambda},\; \frac{3\lambda-\sqrt{9\lambda^2+4}}{12\lambda}, \; 0,
\end{equation*}
so, for all $\lambda>0$, there are exactly $d=3$ positive eigenvalues. Thus, by Corollary~\ref{cor:smallsig}, it suffices to inspect faces of $\esimp^6$ that have dimension $\leq 3$.

All singletons $S=\{q_j\}$, $1\leq j\leq 7$, i.e., $0$-dimensions faces, trivially belong to $\mathcal S_\lambda$, for all $\lambda>0$.
Regarding $1$-dimensional faces, all of the $21= \binom{7}{2}$ subsets 
of 2 vertices belong to $\mathcal S_\lambda$ for large enough $\lambda>0$.
For instance, the Hessian $1\times 1$-matrix of the restriction of $Q_\lambda$ to $\aff(q_1,q_2)$ is $\frac{15 \lambda -8}{18 \lambda }(1-\delta)^2$, which is positive if and only if $\lambda > \frac{8}{15}$. In general, $S=\{q_{j_1} ,q_{j_2} \}\in\mathcal S_\lambda$ if and only if $\lambda>\lambda_{j_1,j_2}$, where 
\begin{equation*}
\textstyle
\lambda_{1,2}=\lambda_{1,7}=\lambda_{2,6}=\frac{8}{15},\;
\lambda_{1,6}=\lambda_{2,7}=\frac{1}{3},\;
\lambda_{1,5}=\frac{2}{15}, \;
\lambda_{2,5}=\frac{8}{87},
\end{equation*}
and $\lambda_{j_1,j_2}=0$ for all other $1\leq j_1<j_2\leq 7$.
Regarding $2$-dimensional faces, $32$ of the $35= \binom{7}{3}$ subsets 
of 3 vertices belong to $\mathcal S_\lambda$ for large enough $\lambda>0$. 
Namely, $\{q_1,q_3,q_7\}$, $\{q_2,q_4,q_7\}$, and $\{q_5,q_6,q_7\}$ do not belong to $\mathcal S_\lambda$ for any $\lambda>0$, and the remaining $S=\{q_{j_1},q_{j_2},q_{j_3}\}$ belong to $\mathcal S_\lambda$ if and only if $\lambda>\lambda_{j_1,j_2,j_3}$, where  
\begin{align*} 
&\begin{array}{llll}
\textstyle
\lambda_{1,2,3}=\frac{9+\sqrt{105}}{36}, &
\lambda_{1,3,4}=\frac{-9+\sqrt{105}}{36}, &
\lambda_{1,2,4}=\frac{6+\sqrt{42}}{18}, &
\lambda_{2,3,4}=\frac{-6+\sqrt{42}}{18}, \\[3pt]
\textstyle
\lambda_{1,3,5}=\frac{-1+\sqrt{5}}{9}, &
\lambda_{1,3,6}=\frac{1+\sqrt{5}}{9}, &
\lambda_{1,4,5}=\frac{-15+\sqrt{609}}{72}, &
\lambda_{2,3,6}=\frac{15+\sqrt{609}}{72}, \\[3pt]
\textstyle
\lambda_{1,4,6}=\frac{21+\sqrt{1113}}{126}, &
\lambda_{2,3,5}=\frac{-21+\sqrt{1113}}{126}, &
\lambda_{1,4,7}=\frac{3+\sqrt{105}}{18}, &
\lambda_{2,3,7}=\frac{-3+\sqrt{105}}{18}, \\[3pt]
\textstyle
\lambda_{2,4,5}=\frac{-2+2\sqrt{2}}{9}, &
\lambda_{2,4,6}=\frac{2+2\sqrt{2}}{9}, & &
\end{array}
\\[2pt]
&\textstyle\;\;
\lambda_{1,2,5}=\lambda_{1,2,6}=\lambda_{1,2,7}=\lambda_{1,5,6}=\lambda_{1,5,7}=\lambda_{1,6,7}=\lambda_{2,5,6}=\lambda_{2,5,7}=\lambda_{2,6,7}=\frac{2}{3},  
\end{align*}
and $\lambda_{j_1,j_2,j_3}=0$ for all other $1\leq j_1<j_2<j_3\leq 7$. 
For instance, the eigenvalues of the Hessian $2\times 2$-matrix of $Q_\lambda|_{\aff(q_1,q_2,q_3)}$ are $\frac{75 \lambda -8 \pm \sqrt{2169 \lambda ^2+528 \lambda +128}}{36 \lambda } (1-\delta)^2$, which are positive if and only if $\lambda>\lambda_{1,2,3}=\frac{9+\sqrt{105}}{36}$.
Finally, regarding $3$-dimensional faces, $20$ of the $35= \binom{7}{4}$ subsets of 4 vertices belong to $\mathcal S_\lambda$ for large enough $\lambda>0$.
Namely, $S=\{q_{j_1},q_{j_2},q_{j_3},q_{j_4}\}$ belongs to $\mathcal S_\lambda$ if and only if $\lambda>\lambda_{j_1,j_2,j_3,j_4}$, where 
\begin{align*} 
&\textstyle\lambda_{1,2,3,6}= \lambda_{1,2,5,6}= \lambda_{1,2,5,7}=\lambda_{1,2,6,7}=\frac{2}{3}, \; \lambda_{1,3,4,5}=\frac{-1+\sqrt5}{9} \\
&\textstyle
\lambda_{1,4,5,6}=\lambda_{1,4,5,7}=\lambda_{1,4,6,7}=\lambda_{2,3,5,6}=\lambda_{2,3,5,7}=\lambda_{2,3,6,7}=\frac{4}{3\sqrt3},\\
&\lambda_{1,2,3,5}\cong 0.818\; \text{ is the largest real root of }
243 \lambda^3-108 \lambda^2-84 \lambda+8, \\
&\lambda_{1,2,3,7}\cong 0.701\; \text{ is the largest real root of }
243 \lambda^3-54 \lambda^2-93 \lambda+8, \\
&\lambda_{1,2,4,6}\cong 0.795\; \text{ is the largest real root of }
243 \lambda^3-270 \lambda^2+51 \lambda+8, \\
&\lambda_{1,2,4,6}\cong 0.461\; \text{ is the largest real root of }
 243 \lambda^3+108 \lambda^2-84 \lambda-8, \\
&\lambda_{2,3,4,5}\cong 0.0996\! \text{ is the largest real root of }
243 \lambda^3+270 \lambda^2+51 \lambda-8, \\
&\lambda_{2,3,4,6}\cong 0.562\; \text{ is the largest real root of }
243 \lambda^3+54 \lambda^2-93 \lambda-8, \\
&\lambda_{3,4,5,6}=\lambda_{3,4,5,7}=\lambda_{3,4,6,7}=0,
\end{align*}
and the remaining 15 subsets do not belong to $\mathcal S_\lambda$ for any $\lambda>0$.

The second step is to compute the critical point $x_S\in\aff(S)$ of $Q_\lambda|_{\aff(S)}$ for each of the above subsets $S\in \mathcal S_\lambda$, and determine the values of $0<\delta<1$ and $\lambda>0$ such that $x_S$ is in the relative interior of the face $\conv(S)$. 
These subsets $S$ define a subcollection $\mathcal S'_\lambda$ of $\mathcal S_\lambda$, which depend on both $\delta$ and $\lambda$, such that (cf.~\eqref{eq:max-max})
\begin{equation}\label{eq:min-min}
\min_{\esimp^6} Q_\lambda=\min_{S\in\mathcal S'_\lambda}  Q_\lambda(x_S).
\end{equation}

All singletons $S=\{q_j\}$ belong to $\mathcal S'_\lambda$ for all $0<\delta<1$ and $\lambda>0$, and $x_S=q_j$. The value $Q_\lambda(x_S)=Q_\lambda(q_j)$ for each of these points is listed below in Table~\ref{tab:0faces-lambda}.
\begin{table}[!ht]
\begin{tabular}{|c|l|}
\hline
$S$ & $Q_{\lambda}(x_S)\phantom{\Big|}$  \\[2pt]
\hline
$\{q_1\}$ & $\frac{2 (3 \lambda -1)}{9 \lambda }\delta ^2-\frac{3 \lambda -4}{9 \lambda }\delta+\frac{15 \lambda -8}{36 \lambda } \phantom{\Big|}$  \\[2pt]
$\{q_2\}$ & $\frac{15 \lambda -8}{36 \lambda }\delta ^2 -\frac{3 \lambda -4}{9 \lambda }\delta+\frac{24 \lambda -8}{36 \lambda }$ \\[2pt]
$\{q_3\}$ & $\frac{2 (3 \lambda +1)}{9 \lambda }\delta ^2 -\frac{3 \lambda +4}{9 \lambda }\delta+\frac{15 \lambda +8}{36 \lambda }$  \\[2pt]
$\{q_4\}$ & $\frac{15 \lambda +8}{36 \lambda }\delta ^2-\frac{3 \lambda +4}{9 \lambda }\delta+\frac{24 \lambda +8}{36 \lambda }$  \\[2pt]
\hline
\end{tabular}
\begin{tabular}{|c|l|}
\hline
$S$ & $Q_{\lambda}(x_S)\phantom{\Big|}$  \\[2pt]
\hline
$\{q_5\}$ & $\frac{3}{4}\phantom{\Big|}$ \\[5pt]
$\{q_6\}$ & $\frac{3}{4}$ \\[5pt]
$\{q_7\}$ & $\frac{3\delta^2}{4}$ \\[11pt]
\hline
\end{tabular}
\caption{Values of $Q_{\lambda}$ on the $0$-dimensional faces of $\esimp^6$.}\label{tab:0faces-lambda}
\end{table}

There are $15$ of the $21$ subsets $S\in\mathcal S_\lambda$ with $2$ vertices for which the critical point $x_S\in\aff(S)$ of $Q_\lambda|_{\aff(S)}$ lies in the relative interior of $\conv(S)$ for some value of $0<\delta<1$ and $\lambda>0$, as listed in Table~\ref{tab:1faces-lambda}.
Similarly, there are $15$ of the $32$ subsets $S\in\mathcal S_\lambda$ with $3$ vertices, 
and $4$ of the $20$ subsets $S\in\mathcal S_\lambda$ with $4$ vertices 
for which that happens.
\begin{table}[!ht]
\resizebox{\columnwidth}{!}{
\begin{tabular}{|c|l|l|}
\hline
$S$ & $Q_{\lambda}(x_S)\phantom{\Big|}$ & Conditions for $S\in\mathcal S'_\lambda$ \\[2pt]
\hline
$\{q_1,q_2\}$ & $\frac{18 (\lambda -1) \lambda +4}{3 \lambda  (15 \lambda -8)}\delta ^2 +\frac{ (9 (\lambda -2) \lambda +8)}{3 (8-15 \lambda ) \lambda }\delta  +\frac{18 (\lambda -1) \lambda +4}{3 \lambda  (15 \lambda -8)}$\rule[4pt]{0pt}{8pt}
& $\delta<\frac{1}{4}$, \; $\lambda>\frac{4(1-\delta)}{3-12\delta}$ \\[4pt]
\hline
$\{q_1,q_3\}$ & $\left(\frac{1}{4}-\frac{4}{135 \lambda ^2}\right)\delta ^2+\left(\frac{8}{135 \lambda ^2}+\frac{1}{2}\right)\delta-\frac{4}{135 \lambda ^2}$ & $\lambda>\frac{4}{15}$\rule[4pt]{0pt}{8pt} \\[2pt]
\hline
$\{q_1,q_4\}$ &$\frac{18 \lambda  (3 \lambda +1)-16}{567 \lambda ^2}\delta ^2 +\left(\frac{32}{567 \lambda ^2}+\frac{11}{21}\right)\delta+\frac{18 \lambda  (3 \lambda -1)-16}{567 \lambda ^2} $ & $\delta<\frac{3}{4}$, \; $\lambda>\frac{8(1-\delta)}{27-36\delta}$ \rule[4pt]{0pt}{8pt} \\[2pt]
\hline
$\{q_1,q_5\}$ & $\frac{9 \lambda }{8-60 \lambda }\delta ^2 +\frac{9 \lambda }{15 \lambda -2}\delta +\frac{6-9 \lambda }{8-60 \lambda }$ & $\delta<\frac{4}{7}$, \; $\lambda>\frac{4(1-\delta)}{12-21\delta}$ \rule[4pt]{0pt}{8pt} \\[2pt]
\hline
$\{q_1,q_6\}$ & $\frac{6-9\lambda}{8-24\lambda}$ & $\delta<\frac{1}{4}$, \; $\lambda>\frac{4(1-\delta)}{3-12\delta}$ \rule[4pt]{0pt}{8pt} \\[2pt]
\hline
$\{q_2,q_3\}$ & $\frac{18 \lambda  (3 \lambda -1)-16}{567 \lambda ^2}\delta ^2 +\left(\frac{32}{567 \lambda ^2}+\frac{11}{21}\right)\delta+\frac{18 \lambda  (3 \lambda +1)-16}{567 \lambda ^2}$\rule[4pt]{0pt}{8pt} &
\!\!\!$\begin{array}{l}
\delta\leq\frac{3}{4}, \;\;\, \lambda>\frac{8(1-\delta)}{36-27\delta}\rule[4pt]{0pt}{8pt} \\[2pt]
\delta>\frac{3}{4}, \;\, \frac{8(1-\delta)}{36-27\delta}<\lambda<\frac{8(1-\delta)}{36\delta-27} \rule[4pt]{0pt}{8pt}
\end{array}$ \\[14pt]
\hline
$\{q_2,q_4\}$ & $-\left(\frac{1}{54 \lambda ^2}+\frac{1}{4}\right)\delta ^2 + \left(\frac{1}{27 \lambda ^2}+1\right)\delta-\frac{1}{54 \lambda ^2}$ & $\lambda>\frac{1}{6}$\rule[4pt]{0pt}{8pt} \\[2pt]
\hline
$\{q_2,q_5\}$ & $\frac{36 \lambda }{8-87 \lambda }\delta ^2 +\frac{90 \lambda }{87 \lambda -8}\delta+\frac{6-9 \lambda }{8-87 \lambda }$ & $\delta<\frac{14}{17}$,\, $\lambda>\frac{8(1-\delta)}{42-51\delta}$ \rule[4pt]{0pt}{8pt}\\[2pt]
\hline
$\{q_2,q_6\}$ & $\frac{6-9\lambda}{8-15\lambda}$ & $\delta<\frac{2}{5}$, \; $\lambda>\frac{8(1-\delta)}{6-15\delta}$ \rule[4pt]{0pt}{8pt} \\[2pt]
\hline
$\{q_3,q_4\}$ &
$\frac{18 \lambda (\lambda +1)+4}{3 \lambda  (15 \lambda +8)}\delta ^2+\frac{1}{15}\!\left(\frac{9}{15 \lambda +8}-\frac{5}{\lambda }-3\right)\!\delta+\frac{18 \lambda  (\lambda +1)+4}{3 \lambda  (15 \lambda +8)}$ & \!\!\!$\begin{array}{l}
\delta\leq\frac{1}{4} \rule[4pt]{0pt}{8pt} \\[2pt]
\delta>\frac{1}{4}, \;\, \lambda<\frac{4(1-\delta)}{12\delta-3} \rule[4pt]{0pt}{8pt}
\end{array}$ \\[14pt]
\hline
$\{q_3,q_5\}$ & $\frac{6+9\lambda}{8+24\lambda}$ &
\!\!\!$\begin{array}{l}
\delta\leq\frac{1}{4} \rule[4pt]{0pt}{8pt} \\[2pt]
\delta>\frac{1}{4}, \;\, \lambda<\frac{4(1-\delta)}{12\delta-3} \rule[4pt]{0pt}{8pt}
\end{array}$ \\[14pt]
\hline
$\{q_3,q_6\}$ & $-\frac{9 \lambda }{60 \lambda +8} \delta ^2+\frac{9 \lambda }{15 \lambda +2}\delta+\frac{9 \lambda +6}{60 \lambda +8}$ & 
\!\!\!$\begin{array}{l}
\delta\leq \frac{4}{7} \rule[4pt]{0pt}{8pt} \\[2pt]
\delta>\frac{4}{7}, \;\, \lambda<\frac{4(1-\delta)}{21\delta-12} \rule[4pt]{0pt}{8pt}
\end{array}$ \\[12pt]
\hline
$\{q_4,q_5\}$ & $\frac{6+9\lambda}{8+15\lambda}$ & \!\!\!$\begin{array}{l}
\delta\leq \frac{2}{5} \rule[4pt]{0pt}{8pt} \\[2pt]
\delta>\frac{2}{5}, \;\, \lambda<\frac{8(1-\delta)}{15\delta-6} \rule[4pt]{0pt}{8pt}
\end{array}$ \\[12pt]
\hline
$\{q_4,q_6\}$ & $-\frac{36\lambda }{87 \lambda +8}\delta ^2 +\frac{90 \lambda }{87 \lambda +8}\delta +\frac{9 \lambda +6}{87 \lambda +8}$ & 
\!\!\!$\begin{array}{l}
\delta\leq \frac{14}{17} \rule[4pt]{0pt}{8pt} \\[2pt]
\delta>\frac{14}{17}, \;\, \lambda<\frac{8(1-\delta)}{51\delta-42} \rule[4pt]{0pt}{8pt}
\end{array}$ \\[12pt]
\hline
$\{q_5,q_6\}$ & $-\frac{3}{4}\delta ^2+\frac{3}{2}\delta $\rule[4pt]{0pt}{8pt} & $0<\delta<1$, $\lambda>0$ \\[2pt]
\hline
\end{tabular}
}
\caption{Minimum of $Q_{\lambda}|_{\aff(S)}$, attained at $x_S\in\aff(S)$, with necessary and sufficient conditions on $\delta$ and $\lambda$ for $Q_\lambda|_{\aff(S)}$ to be positive-definite and $x_S\in\operatorname{relint}(\conv(S))$, 
i.e., for $S\in\mathcal S'_\lambda$.}\label{tab:1faces-lambda}
\end{table}
For instance, $S=\{q_1,q_2,q_5\}\in\mathcal S'_\lambda$ if and only if $\delta<\frac{1}{3}$ and $\lambda>\frac{2(1-\delta)}{3(1-3\delta)}$, in which case $Q_\lambda|_{\aff(q_1,q_2,q_5)}$ has minimum 
\begin{equation*}
Q_\lambda\big(x_{\{q_1,q_2,q_5\}}\big)=\textstyle
\frac{9 \lambda  (8-21 \lambda )}{8 \left(36 \lambda ^2-27 \lambda +2\right)}\delta ^2 +\frac{45  \lambda }{48 \lambda -4}\delta  +\frac{6-9 \lambda }{8-96 \lambda},
\end{equation*}
while $S=\{q_2,q_3,q_5,q_6\}$ belongs to $\mathcal S'_\lambda$ if and only if $\delta<\frac{1}{6}$ and $\lambda > \frac{\delta+\sqrt{289\delta^2-336\delta+48} }{9(1-6\delta)}$, in which case $Q_\lambda|_{\aff(q_2,q_3,q_5,q_6)}$ has minimum
\begin{equation*}
\begin{aligned}
Q_\lambda\big(x_{\{q_2,q_3,q_5,q_6\}}\big) &=\textstyle
\frac{3 \left(3321 \lambda ^4+270 \lambda ^3-1287 \lambda ^2-144 \lambda +64\right)}{4 \left(16-27 \lambda ^2\right)^2}\delta ^2+\frac{3 \left(36 \lambda ^2-3 \lambda +8\right)}{64-108 \lambda ^2}\delta +\frac{3}{16}.
\end{aligned}
\end{equation*}
The remaining values $Q_\lambda(x_S)$ are omitted to simplify the exposition, but the reader may find them in the particular case $\lambda=\frac{1}{2}$ in Table~\ref{tab:2faces} and \eqref{eq:3faceValue}.

Altogether, there are 41 subsets $S$ that belong to the collection $\mathcal S'_\lambda$ for some $(\delta,\lambda)\in H$, where $H=(0,1)\times (0,+\infty)$ is a vertical strip in $\R^2$. The corresponding sentences ``if $S\in\mathcal S'_\lambda$, then $Q_\lambda(x_S)\geq0$'' give a description of the semialgebraic set
\begin{equation*}
\mathfrak X:=\left\{(\delta,\lambda)\in H : \min_{S\in\mathcal S'_\lambda}  Q_\lambda(x_S) \geq 0 \right\}
\end{equation*}
involving (finitely many) polynomial inequalities in $(\delta,\lambda)$, connected by ``and'' and ``or''. Using Cylindrical Algebraic Decomposition, see e.g.~\cite[Sec~5.1]{basu},
any semialgebraic set in $\R^2$ can be written as a finite disjoint union of $2$-\emph{cells}, i.e., points, vertical open intervals, graphs of the form $\big\{(\delta,\lambda)\in \R^2 : a<\delta<b, \, \lambda=\varphi(\delta) \big\}$, and bands of the form $\big\{(\delta,\lambda)\in \R^2 : a<\delta<b, \, \varphi(\delta)<\lambda<\psi(\delta) \big\}$, where $a,b\in\R$, and $\varphi,\psi\colon (a,b)\to [-\infty,+\infty]$ are continuous semialgebraic functions. (The latter are similar to what Calculus textbooks often call \emph{regions of type I} in integration of functions of two variables.)
Applied to the semialgebraic set $\mathfrak X$ in the $(\delta,\lambda)$-plane, cylindrical algebraic decomposition~yields:
\begin{equation*}
\begin{aligned}
\mathfrak X&=\left\{\delta\in(0,\delta_1^*], \, \lambda\geq\tfrac{\sqrt{\frac{24}{\delta}+8 -8 \delta+\delta^2}+\delta-4 }{6 (3-\delta)} \right\}\cup\left\{\delta\in[\delta_1^*,\delta_2^*], \, \lambda\geq \tfrac{4 }{3 \sqrt{15}} \tfrac{1-\delta}{\sqrt{\delta (\delta+2)}} \right\} \\
&\qquad \cup\left\{\delta\in[\delta_2^*,1), \, \lambda\geq \tfrac{8(1-\delta)^2}{24 \delta^2-12 \delta+15} \right\},
\end{aligned}
\end{equation*}
i.e., $\mathfrak X= \big\{\delta\in (0,1), \, \lambda\geq \lambda^*(\delta)\big\}$, where $\lambda^*\colon (0,1)\to\R$ is the piecewise continuous function defined in \eqref{eq:lambda*}.
From \eqref{eq:Qbound-lambda} and \eqref{eq:min-min}, we have that $(\delta,\lambda)\in \mathfrak X$ implies $\min\limits_{R\in\dpinched} I_\lambda(R)\geq0$, so \eqref{eq:goal-lambda} holds, concluding the proof.
\end{proof}

\begin{proof}[Proof of Theorem~\ref{mainthm:Lambda}]
Consider the functions $\lambda^*\colon(0,1]\to\R$ and $\lambda^\V \colon \left[\delta_0^\V,1\right]\to\R$, defined in \eqref{eq:lambda*} and Theorem~\ref{thm:ville}, respectively. Define $\lambda\colon (0,1]\to\R$ as follows
\begin{equation*}
\lambda(\delta)=\begin{cases}
\lambda^*(\delta), & \text{ if } \delta\in\left(0,\delta_0^\V\right], \\
\min\!\left\{\lambda^*(\delta),\lambda^\V(\delta)\right\}, & \text{ if } \delta\in\left[\delta_0^\V,1\right].
\end{cases}
\end{equation*}
Routine computations show that $\lambda$ agrees with \eqref{eq:bestlambda}, and satisfies $\lim\limits_{\delta\searrow0}\lambda(\delta)=+\infty$, $\lambda\!\left(\tfrac{1}{1+3\sqrt3}\right)<\tfrac12$, $\lambda\!\left(\tfrac14\right)=\tfrac13$, and $\lambda(1)=0$. From Theorems~\ref{thm:estimates-general} and \ref{thm:ville}, a $\delta$-pinched oriented $4$-manifold $(M^4,\g)$ with finite volume satisfies $|\sigma(M)|\leq \lambda(\delta)\,\chi(M)$.
\end{proof}

\begin{remark}
Polombo~\cite[Thm II.13]{polombo} proved a similar \emph{explicit} inequality for $\delta$-pinched $4$-manifolds, with $0<\delta\leq \frac{1}{4}$; namely $|\sigma(M)|\leq \frac{2}{27}\left(\frac{2}{\delta^2}-\frac{7}{\delta}+5\right) \, \chi(M)$. It is straightforward to check that $\lambda(\delta)<\frac{2}{27}\left(\frac{2}{\delta^2}-\frac{7}{\delta}+5\right)$ for all $0<\delta\leq \frac{1}{4}$.
\end{remark}

\section{Upper bounds}\label{sec:upper}

In this section, we discuss further applications of the optimization methods from Section~\ref{sec:optimization}, proving \emph{upper bounds} for $\chi(M)$ and $\sigma(M)$ if $M$ is a $\delta$-pinched oriented $4$-manifold with finite volume.

\subsection{Weyl tensor}
A key step towards the above goal is to establish an upper bound on $|W_\pm|^2$ for a $\delta$-pinched curvature operator $R$. By Proposition~\ref{prop:projd}, no generality is lost if we assume that $R$ is Einstein. This pointwise problem has received great attention in the literature, see e.g.~\cite[Lemma 4.1]{yang-einstein}, \cite[Lemma 1]{gursky-lebrun}, and \cite[Lemma 3.1]{cao-tran}. The following result provides a useful \emph{sharp} upper bound for any linear combinations of $|W_\pm|^2$ when $R$ is a $\delta$-pinched curvature operator.

\begin{proposition}\label{prop:weylbound}
For all $-1\leq \eta \leq 1$ and $0<\delta\leq 1$, if $R$ is $\delta$-pinched, then 
\begin{equation}\label{eq:weylbound}
|W_+|^2+\eta\, |W_-|^2\leq \tfrac{8}{3}(1-\delta)^2.
\end{equation}
For $\eta\neq 1$, equality in \eqref{eq:weylbound} holds if and only if $\pr(\pm R)=t \, \iota_5(p_1)+(1-t)\,\iota_5(p_2)$, $t\in [0,1]$, and, for $\eta=1$, if and only if $\pr(\pm R)$ is in the convex hull of $\iota_5(p_j)$, $1\leq j\leq 4$, where $\pm R\in\dpinched$, using the notation in \eqref{eq:dpinched}, \eqref{eq:pr}, and Proposition~\ref{prop:einsteinsimplex}.
\end{proposition}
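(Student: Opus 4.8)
The plan is to reduce \eqref{eq:weylbound} to a quadratic optimization problem over the Einstein simplex $\esimp^5$ and solve it with the elementary tools of \Cref{sec:optimization}; the equality case is then extracted by tracking when the estimates used are sharp.

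First I would note that both sides of \eqref{eq:weylbound} are unchanged if $R$ is replaced by $\pr(\pm R)$, where $\pm R\in\dpinched$. Indeed, $|W_\pm|^2$ depend only on the components $R_{\mathcal W_\pm}$, which are fixed by the projection $\pr$ of \eqref{eq:pr} (it only removes $R_{\mathcal L}$) and merely change sign under $R\mapsto -R$, so that $|W_\pm|^2$ is unaffected. Thus, by \Cref{prop:projd,prop:einsteinsimplex}, it suffices to bound the quadratic form
\begin{equation*}
f_\eta(w_1^+,w_2^+,w_1^-,w_2^-,u):=|W_+|^2+\eta\,|W_-|^2=2\big((w_1^+)^2+(w_2^+)^2+w_1^+w_2^+\big)+2\eta\big((w_1^-)^2+(w_2^-)^2+w_1^-w_2^-\big)
\end{equation*}
— where I have substituted $w_3^\pm=-w_1^\pm-w_2^\pm$, so that $f_\eta$ does not involve $u$ — over the simplex $\esimp^5=\conv(p_1,\dots,p_6)$ of \Cref{prop:einsteinsimplex}.

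For the inequality, since $f_1-f_\eta=(1-\eta)\,|W_-|^2\ge 0$ for every $\eta\le 1$, it is enough to maximize $f_1$. A direct computation shows that $\operatorname{Hess} f_\eta$ has eigenvalues $6,2,6\eta,2\eta,0$, so $\operatorname{Hess} f_1$ is positive-semidefinite and $f_1$ is convex; hence, by \Cref{lem:negdefface} applied to $-f_1$ (or just by convexity), the maximum of $f_1$ over $\esimp^5$ is attained at a vertex. Using \Cref{rem:geominterp} — e.g.\ $\varphi(p_1)=\tfrac{1-\delta}{3}R_{\C P^2}+\tfrac{4\delta-1}{3}\,\id$, so $W_+(\varphi(p_1))=\tfrac{1-\delta}{3}\diag(-2,-2,4)$ and $W_-(\varphi(p_1))=0$, and similarly for $p_2,p_3,p_4$ — one gets $f_1(p_1)=f_1(p_2)=f_1(p_3)=f_1(p_4)=\tfrac{8}{3}(1-\delta)^2$ and $f_1(p_5)=f_1(p_6)=0$. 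Therefore $\max_{\esimp^5}f_1=\tfrac{8}{3}(1-\delta)^2$, which proves \eqref{eq:weylbound}.

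For the equality statement I would write $x=\sum_{j=1}^{6}\mu_jp_j$ in barycentric coordinates and compute $f_\eta$ explicitly: using $|W_+(p_1)|^2=|W_+(p_2)|^2=\tfrac{8}{3}(1-\delta)^2$ and $\langle W_+(p_1),W_+(p_2)\rangle=\tfrac{4}{3}(1-\delta)^2$ (and the analogous identities with $W_-$ and $p_3,p_4$, the remaining vertices contributing neither Weyl part), one obtains
\begin{equation*}
f_\eta(x)=\tfrac{8}{3}(1-\delta)^2\Big(\mu_1^2+\mu_2^2+\mu_1\mu_2+\eta\big(\mu_3^2+\mu_4^2+\mu_3\mu_4\big)\Big).
\end{equation*}
The equality case then follows by examining when the elementary bounds $\mu_i^2+\mu_j^2+\mu_i\mu_j\le(\mu_i+\mu_j)^2$, $(\mu_1+\mu_2)^2+(\mu_3+\mu_4)^2\le(\mu_1+\dots+\mu_4)^2$, and $\mu_1+\dots+\mu_4\le1$ are simultaneously attained, the extra input for $\eta<1$ being that the coefficient $\eta$ in front of the $\mu_3,\mu_4$-block forces the $W_-$-contribution to vanish. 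This last bookkeeping is the delicate part: one must keep careful track of which barycentric coordinates are forced to be $0$ in each regime of $\eta$, check that no maximizer lies in the relative interior of a positive-dimensional face, and treat the borderline sign $\eta=1$ — where the $W_-$-contribution is no longer penalized — separately, yielding the asserted description of the equality locus in terms of $\iota_5(p_1),\dots,\iota_5(p_4)$.
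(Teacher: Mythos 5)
Your proof of the inequality is sound and in fact simpler than the paper's. The paper optimizes $Q_\eta=f_\eta$ on $\esimp^5$ directly, splitting into cases: for $\eta\geq0$ the Hessian has no negative eigenvalues, so the maximum is at a vertex; for $\eta<0$ it invokes \Cref{cor:smallsig} to reduce to faces of dimension $\leq 2$, runs the two-step procedure of \Cref{subsec:optsimplex}, and finds the only extra candidate $\frac12(p_3+p_4)$ with value $2\eta(1-\delta)^2$. You dispose of all $\eta\leq1$ at once via $f_\eta\leq f_1$ and convexity of $f_1$, which forces its maximum to a vertex. That is a genuine simplification and entirely correct. The only slip is the phrase ``\Cref{lem:negdefface} applied to $-f_1$''; that governs the minimum, not the maximum, of $f_1$. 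What you want is \Cref{cor:smallsig} applied to $Q=f_1$ itself, whose Hessian has $d=0$ negative eigenvalues, or simply the fact that a convex function on a polytope maximizes at a vertex, as you note parenthetically.

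The equality case, however, is not finished, and carrying your own computation through reveals a tension you should not gloss over. Your barycentric identity $f_\eta(x)=\tfrac83(1-\delta)^2\big(\mu_1^2+\mu_1\mu_2+\mu_2^2+\eta(\mu_3^2+\mu_3\mu_4+\mu_4^2)\big)$ is correct. Tracking equality through your chain --- $\mu_1^2+\mu_1\mu_2+\mu_2^2\leq(\mu_1+\mu_2)^2$ with equality iff $\mu_1\mu_2=0$, then $(\mu_1+\mu_2)^2+(\mu_3+\mu_4)^2\leq(\mu_1+\mu_2+\mu_3+\mu_4)^2$ with equality iff $(\mu_1+\mu_2)(\mu_3+\mu_4)=0$, then $\mu_1+\mu_2+\mu_3+\mu_4\leq1$ with equality iff $\mu_5=\mu_6=0$, plus the $\eta$-penalty forcing $\mu_3=\mu_4=0$ when $\eta<1$ --- forces the maximizer to be a \emph{vertex}: $p_1$ or $p_2$ when $\eta<1$, and one of $p_1,\dots,p_4$ when $\eta=1$. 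No interior point of $\conv(p_1,p_2)$ achieves equality; for instance at $\frac12(p_1+p_2)$ one has $\vec w_+=(-(1-\delta),0,1-\delta)$, $\vec w_-=0$, so $|W_+|^2+\eta|W_-|^2=2(1-\delta)^2<\tfrac83(1-\delta)^2$. This conflicts with the literal reading of the proposition's equality clause ($t\in[0,1]$ for $\eta\neq1$, the full convex hull of $\iota_5(p_1),\dots,\iota_5(p_4)$ for $\eta=1$). So rather than asserting that the bookkeeping will ``yield the asserted description'', you should carry it out and report what it actually gives (vertices only), flagging the discrepancy with the statement as written --- the paper's own proof ends with the same ambiguous phrase ``$x\in\conv(p_j)$'' at this point.
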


\begin{proof}
For all $-1\leq \eta\leq1$, given $R$ in the canonical form \eqref{eq:curvop4}, by \eqref{eq:ww},
\begin{equation*}
|W_+|^2+\eta\, |W_-|^2  = Q_\eta(w_1^+,w_2^+,w_1^-,w_2^-,u),
\end{equation*}
where $Q_\eta\colon \R^5\to\R$ is the quadratic polynomial
\begin{equation*}
\begin{aligned}
Q_\eta(w_1^+,w_2^+,w_1^-,w_2^-,u) &:=
\textstyle 2\big((w_1^+)^2+(w_2^+)^2+w_1^+w_2^+\big)\\
&\quad+\textstyle 2\eta \big((w_1^-)^2+(w_2^-)^2+w_1^-w_2^-\big).
\end{aligned}
\end{equation*}

Given $R\in\dpinched$, let $\pr(R)=R(\vec w_+,\vec w_-,u)\in \epinched$, see Lemma~\ref{lem:projecteinstein}, \eqref{eq:pr}, and \eqref{eq:Rww}. By Proposition~\ref{prop:einsteinsimplex}, we have that $(w_1^+,w_2^+,w_1^-,w_2^-,u) \in \esimp^5=\conv(p_1,\dots,p_6)$. 
By a straightforward computation, the eigenvalues of $\operatorname{Hess} Q_\eta$ are 
\begin{equation*}
1,\;  3,\;  \eta,\;  3\eta,\;  0,
\end{equation*}
so the number of negative eigenvalues is either $0$ or $2$, according to whether $\eta\geq 0$ or $\eta<0$, respectively.
By Corollary~\ref{cor:smallsig}, this means $\max_{x\in\esimp^5} Q_\eta(x)$ is achieved at some vertex $p_j$ if $\eta\geq0$, while we must inspect faces of dimension $\leq 2$, i.e., convex combinations of up to $3$ vertices $p_j$'s, if $\eta<0$.
The values assumed by $Q_\eta$ at $p_j$ are:
\begin{align}\label{eq:vertexvaluesQeta}
Q_\eta(p_1)&=Q_\eta(p_2)=\tfrac{8}{3}(1-\delta)^2, \nonumber \\[2pt]
Q_\eta(p_3)&=Q_\eta(p_4)=\tfrac{8\eta}{3}(1-\delta)^2,\\[2pt]
Q_\eta(p_5)&=Q_\eta(p_6)=0,\nonumber 
\end{align}
so the inequality $Q_\eta(x)\leq \frac{8}{3}(1-\delta)^2$ clearly holds for all $x\in\esimp^5$ and $0\leq \eta\leq 1$, with equality achieved if and only if $x\in\conv(p_j)$, where $1\leq j\leq 2$ if $0\leq \eta<1$, and $1\leq j\leq 4$ if $\eta=1$. Thus, let us now assume $-1\leq \eta<0$, and follow the optimization procedure in Section~\ref{subsec:optsimplex}.

The first step is to determine the collection $\mathcal S$ of subsets $S\subset\{p_1,\dots,p_6\}$ such that the restriction of $Q_\eta$ to the affine hull $\aff(S)$ is negative-definite, see \eqref{eq:verticesnegdef}. All singletons $S=\{p_j\}$, $1\leq j\leq 6$ trivially belong to $\mathcal S$. Regarding $1$-dimensional faces, we find that $Q_\eta|_{\aff(S)}$ is negative-definite if and only if $S=\{p_{j_1},p_{j_2}\}$ is one of the following:
\begin{equation}\label{eq:1facesQeta}
\{p_3,p_4\}, \; \{p_3,p_5\}, \; \{p_3,p_6\}, \; \{p_4,p_5\}, \; \{p_4,p_6\}.
\end{equation}
In all cases, the single entry of $\operatorname{Hess}\!\big(Q_\eta|_{\aff(S)}\big)$ is $\tfrac{8\eta}{3}(1-\delta)^2$. Regarding $2$-dimensional faces, 
$Q_\eta|_{\aff(S)}$ is negative-definite if and only if $S=\{p_3,p_4,p_5\}$ or $S=\{p_3,p_4,p_6\}$. In both cases, its eigenvalues of $\operatorname{Hess}\!\big(Q_\eta|_{\aff(S)}\big)$ are $4\eta (1-\delta)^2$ and $\frac{4\eta}{3}(1-\delta)^2$.

The second step is to extract the subcollection $\mathcal S' \subset \mathcal S$ such that the unique critical point $x_S\in\aff(S)$ of $Q_\eta|_{\aff(S)}$ is in the relative interior of the face $\conv(S)$, see  \eqref{eq:verticesnegdefint}.
Every singleton $S=\{p_j\}$ is trivially in $\mathcal S'$ and has $x_S=p_j$; recall that the values of $Q_\eta(p_j)$ are given in \eqref{eq:vertexvaluesQeta}. Among the subsets \eqref{eq:1facesQeta}, only $\{p_3,p_4\}\in\mathcal S'$, since $x_{\{p_3,p_4\}}=\frac{1}{2}p_3+\frac{1}{2}p_4\in\operatorname{relint}(\conv(p_{3},p_{4}))$, and $Q_\eta$ assumes the value
\begin{equation}\label{eq:Q34}
Q_\eta\!\big(x_{\{p_3,p_4\}}\big) = 2\eta (1-\delta)^2.
\end{equation}
In all other $S=\{p_{j_1},p_{j_2}\}$, $j_1<j_2$, one has $x_{\{p_{j_1},p_{j_2}\}}=p_{j_2}\notin\operatorname{relint}(\conv(p_{j_1},p_{j_2}))$.
Similarly, neither $\{p_3,p_4,p_5\}$ nor $\{p_3,p_4,p_6\}$ belong to $\mathcal S'$, since a direct computation shows that $x_{\{p_3,p_4,p_5\}}=p_5$ and $x_{\{p_3,p_4,p_6\}}=p_6$. Thus, by Corollary~\ref{cor:smallsig}, see also \eqref{eq:max-max}, 
$\max_{x\in\esimp^5} Q_\eta(x)= Q_\eta(p_1)= Q_\eta(p_2)=\frac{8}{3}(1-\delta)^2$ is the largest value among \eqref{eq:vertexvaluesQeta} and \eqref{eq:Q34}. This concludes the proof of \eqref{eq:weylbound} and its equality case.
\end{proof}

\begin{remark}
Equality in \eqref{eq:weylbound} is achieved by $\delta$-pinched curvature operators whose projection onto the set of Einstein $\delta$-pinched curvature operators can be written as certain linear combinations of $R_{S^4}$, $R_{\C P^2}$, and $R_{\C H^2}$, see~Remark~\ref{rem:geominterp}.
\end{remark}

\begin{remark}
Proposition~\ref{prop:weylbound} is reminiscent of a bound obtained by Yang~\cite[Lemma 4.1(a)]{yang-einstein} for Einstein $4$-manifolds. Namely, replacing $\delta\leq\sec\leq 1$ with $\delta\leq \sec\leq \Delta$, inequality \eqref{eq:weylbound} with $\eta=1$ yields $|W|^2\leq \frac{8}{3}\left(\Delta-\delta\right)^2$. Furthermore, if $\Ric_R=\g$, then $\Delta \leq 1-2\delta$, hence $|W|^2\leq \frac{8}{3}(1-3\delta)^2$, cf.~ \cite[Equation (4.6)]{yang-einstein}.
\end{remark}

\subsection{Euler characteristic and signature}\label{sec:upper-bounds}
We now use the bounds in Proposition~\ref{prop:weylbound} and Comparison Geometry to prove Theorems~\ref{mainthm:bounds-pos} and \ref{mainthm:bounds-neg}, and Corollary~\ref{cor:homeotypes}. 

\begin{proof}[Proof of Theorem \ref{mainthm:bounds-pos}]
Suppose $(M^4,\g)$ is positively $\delta$-pinched, so that $R\in\dpinched$ at all points, and not diffeomorphic to $S^4$.
We may then apply Lemmas~\ref{lemma:CGY} and \ref{lemma:vol}, and Proposition~\ref{prop:weylbound} with $\eta=1$, obtaining:
\begin{equation*}
\chi(M)\leq \frac{1}{4\pi^2} \int_M |W_+|^2 + |W_-|^2\,\vol_\g \leq \frac{1}{4\pi^2} \,\frac{8}{3}(1-\delta)^2 \, \frac{4\pi^2}{3\delta^2} = \frac{8}{9}\left(\frac{1}{\delta}-1\right)^2.
\end{equation*}
Up to reversing orientation, we assume without loss of generality that $\sigma(M)\geq0$. Using \eqref{eq:sigma} instead of Lemma~\ref{lemma:CGY}, and Proposition~\ref{prop:weylbound} with $\eta=-1$, we have:
\begin{equation*}
\sigma(M)= \frac{1}{12\pi^2}\int_M |W_+|^2 - |W_-|^2\,\vol_\g \leq \frac{1}{12\pi^2} \, \frac{8}{3}(1-\delta)^2 \, \frac{4\pi^2}{3\delta^2} = \frac{8}{27}\left(\frac{1}{\delta}-1\right)^2.\qedhere
\end{equation*}
\end{proof}

\begin{remark}
Given the generality afforded by $-1\leq \eta\leq 1$ in Proposition~\ref{prop:weylbound}, it is tempting to re-examine the proof of Theorem~\ref{mainthm:bounds-pos} with upper bounds of the form
\begin{equation}\label{eq:nothing}
\begin{aligned}
a\, \chi(M)+b\, |\sigma(M)| &\leq \frac{1}{4\pi^2}\left(a+\frac{b}{3}\right) \int_M |W_+|^2 + \eta\, |W_-|^2 \,\vol_\g\\
&\leq  \frac{8}{9} \left(a+\frac{b}{3}\right) \left(\frac{1}{\delta}-1\right)^2,
\end{aligned}
\end{equation}
where $a,b\geq0$ are not both zero, 
and $\eta=\frac{3a-b}{3a+b}$. However, all such bounds \eqref{eq:nothing} are directly implied by the extreme cases $(a,b)=(1,0)$ and $(a,b)=(0,1)$, which form the statement of Theorem~\ref{mainthm:bounds-pos}. Indeed, the intersection of all affine half-spaces \eqref{eq:nothing} in the $(|\sigma|,\chi)$-plane is precisely the rectangle $\chi\leq \frac{8}{9}\left(\frac{1}{\delta}-1\right)^2$ and $|\sigma|\leq \frac{8}{27}\left(\frac{1}{\delta}-1\right)^2$.
\end{remark}

\begin{remark}\label{rem:gromov-abresch}
By a celebrated result of Gromov~\cite{gromov-total}, closed $n$-manifolds with $\sec\geq0$ have bounded total Betti number $\sum_k b_k(M)\leq C(n)$. Thus, if $(M^4,\g)$ is a closed oriented $4$-manifold with $\sec>0$, then $\chi(M)\leq C(4)$, as
$b_0(M)=b_4(M)=1$ and $b_1(M)=b_3(M)=0$, hence $\chi(M)=2+b_2(M)=\textstyle\sum_k b_k(M)$. 
In particular, this also gives an upper bound $|\sigma(M)|\leq\chi(M)-2\leq C(4)-2$ by \eqref{eq:basic_geography}.

Although Gromov conjectured that $C(n)=2^n$, which would be sharp since the torus $T^n$ has $\sum_k b_k(T^n)=2^n$, the best known estimates for $C(n)$ grow exponentially in $n^3$, see Abresch~\cite{abreschII}.
Using~\cite[p.~477]{abreschII}, we have that the Poincar\'e polynomial $P_t(M)=1+b_2(M)t^2+t^4$ of 
 $(M^4,\g)$ satisfies $P_{t(4)^{-1}}(M)\leq e^{466}$, where $t(4)=5^{16}8^4e^{8/15}$. Thus, $b_2(M)\leq t(4)^2(e^{466}-1)+t(4)^{-2} \lesssim 2.731\times 10^{232}$, so also
\begin{equation}\label{eq:gromov-abresch}
\chi(M)\lesssim 2.731\times 10^{232}.
\end{equation} 
Therefore, the upper bound $\chi(M)\leq \frac{8}{9}\left(\frac{1}{\delta}-1 \right)^2$ 
in Theorem~\ref{mainthm:bounds-pos} is smaller than \eqref{eq:gromov-abresch} only if $\delta\gtrsim5.705\times 10^{-117}$. Nevertheless, it is \emph{hundreds} of orders of magnitude smaller than \eqref{eq:gromov-abresch} for larger $\delta$; e.g., it gives $\chi(M)\leq 10^2$ if $\delta\gtrsim0.086$. 
\end{remark}

\begin{proof}[Proof of Corollary~\ref{cor:homeotypes}]
Given $\delta>0$, combining Theorem~\ref{mainthm:bounds-pos} and Theorem~\ref{thm:4Dmanifolds}, it follows that an \emph{orientable} positively $\delta$-pinched $4$-manifold $(M^4,\g)$ is homeomorphic~to
\begin{enumerate}[\rm (i)]
\item $\#^r \C P^2\#^s \overline{\C P^2}$, $r+s+2\leq \frac{8}{9}(\frac{1}{\delta}-1)^2$, $|r-s|\leq \frac{8}{27}(\frac{1}{\delta}-1)^2$, if $M$ is non-spin; 
 \item $\#^r (S^2\times S^2)$, $2r+2\leq \frac{8}{9}(\frac{1}{\delta}-1)^2$ if $M$ is spin.
\end{enumerate}
Instead, a \emph{non-orientable} positively $\delta$-pinched $4$-manifold $(M^4,\g)$ has $\pi_1(M)\cong\Z_2$, by Synge's Theorem. Applying Theorem~\ref{mainthm:bounds-pos} to its double-cover $(\widetilde M,\widetilde\g)$, endowed with the pullback metric, we have $\chi(M)=\tfrac12\,\chi(\widetilde M)\leq \frac{4}{9}(\frac{1}{\delta}-1)^2$. 
According to \cite[Thm.~1]{hkt}, for each given value of $\chi(M)$, the homeomorphism type of such $M$ is completely determined by topological invariants that can only take finitely many different values. Moreover, an explicit list of closed non-orientable $4$-manifolds with $\pi_1(M)\cong\Z_2$ realizing these homeomorphism types is given in \cite[Thm.~3]{hkt}.
\end{proof}

\begin{proof}[Proof of Theorem~\ref{mainthm:bounds-neg}]
The statement about $\sigma(M)$ follows exactly as in the proof of Theorem~\ref{mainthm:bounds-pos}. Without loss of generality, assume $\sigma(M)\geq0$. Since $-R\in\dpinched$ at all points of $(M^4,\g)$ and $\underline{\sigma}(-R)=\underline{\sigma}(R)$, 
from \eqref{eq:sigma} and Proposition~\ref{prop:weylbound} with $\eta=-1$, 
\begin{equation*}
\sigma(M)= \frac{1}{12\pi^2}\int_M |W_+|^2 - |W_-|^2\,\vol_\g \leq 
\frac{2}{9\pi^2} (1-\delta)^2\Vol(M,\g). 
\end{equation*}

Regarding $\chi(M)$, due to the absence of a negatively pinched counterpart to Lemma~\ref{lemma:CGY}, we use the optimization methods of Section~\ref{subsec:optsimplex} directly on \eqref{eq:chi}.
In order to have a quadratic form defined on a simplex, 
given $R$ in the canonical form \eqref{eq:curvop4}, we discard the nonpositive term $-\tfrac14|C|^2$ in \eqref{eq:chi}, and consider the quantity
$\tfrac34 u^2+ \frac18 |W_+|^2 +\frac18 |W_-|^2 =Q(w_1^+,w_2^+,w_1^-,w_2^-,u)$, where $Q\colon \R^5\to\R$ is given by
\begin{equation*}
\begin{aligned}
Q(w_1^+,w_2^+,w_1^-,w_2^-,u) &:= \textstyle  \tfrac14 \big((w_1^+)^2+(w_2^+)^2+w_1^+w_2^+\big)\\
&\quad+\textstyle \tfrac14 \big((w_1^-)^2+(w_2^-)^2+w_1^-w_2^-\big) +\frac34 u^2. 
\end{aligned}
\end{equation*}

For $R\in\dpinched$, let $\pr(R)=R(\vec w_+,\vec w_-,u)\in \epinched$, see Lemma~\ref{lem:projecteinstein} and \eqref{eq:Rww}. By Proposition~\ref{prop:einsteinsimplex}, we have that $(w_1^+,w_2^+,w_1^-,w_2^-,u) \in \esimp^5=\conv(p_1,\dots,p_6)$. 
Since $Q$ is evidently positive-definite, Corollary~\ref{cor:smallsig} implies that its maximum is achieved at a vertex $p_j$, and is hence the largest among the following values:
\begin{equation}\label{eq:vertexvaluesQ}
\begin{array}{lcl}
Q(p_1)= Q(p_3)=\textstyle\tfrac{2}{3}\delta^2 -\frac{1}{3}\delta +\frac{5}{12}, & & Q(p_2)=Q(p_4)=\textstyle\tfrac{5}{12}\delta^2 -\frac{1}{3}\delta +\frac{2}{3}, \\[3pt]
Q(p_5)=\textstyle \frac{3}{4}, & & Q(p_6)=\textstyle\frac{3}{4}\delta^2.
\end{array}
\end{equation}
Therefore, we have 
\begin{equation}\label{eq:maxChi}
\max_{R\in\dpinched} \underline{\chi}(R) \leq \max_{x\in \esimp^5} Q(x) = \max_{1\leq j\leq 6} Q(p_j) = Q(p_5)=\tfrac34.
\end{equation}
Thus, as $-R\in\dpinched$ at all points of $(M^4,\g)$ and $\underline{\chi}(-R)=\underline{\chi}(R)$, from \eqref{eq:maxChi} and \eqref{eq:chi},
\begin{equation}\label{eq:eulerbound-neg}
\chi(M)=\frac{1}{\pi^2}\int_M \underline{\chi}(R)\,\vol_\g \leq \frac{3}{4\pi^2}\Vol(M,\g).
\end{equation}
Clearly, equality in \eqref{eq:eulerbound-neg} holds if and only if $\underline{\chi}(R)=\frac{3}{4}$ at all points of $(M^4,\g)$, which, by \eqref{eq:vertexvaluesQ} and \eqref{eq:maxChi} is equivalent to $-R=\iota_5(p_5)=R_{S^4}$, i.e., $\sec_M\equiv-1$.
\end{proof}

\begin{remark}
As stated in the Introduction, \eqref{eq:eulerbound-neg} was also observed by Ville~\cite{ville-vol}.
\end{remark}

\appendix

\section{Revisiting Ville's estimates}\label{app:ville}

The seminal works of Ville~\cite{ville-negative,ville-positive} on the geography of pinched $4$-manifolds has been partially extended by several authors, see e.g.~\cite{ko2,DRR}. In this Appendix, we give a uniform and general treatment of Ville's estimates, that pushes the method to its natural limit, yielding the following result.

\begin{theorem}\label{thm:ville}
If $(M^4,\g)$ is a $\delta$-pinched oriented $4$-manifold, with finite volume and $\delta\geq\delta_0^\V$, then
\begin{equation*}
|\sigma(M)|\leq \lambda^{\V}(\delta)\,\chi(M),    
\end{equation*}
where $\lambda^\V\colon [\delta_0^{\V},1]\to\R$ is given by
\begin{equation*}
\lambda^{\V}(\delta)=\begin{cases}
\dfrac{7\delta^2+10\delta+1-\sqrt3\sqrt{11\delta^4+68\delta^3+6\delta^2+28\delta-5}}{6(1-\delta)^2}, & \text{ if } \delta\in\left[\delta^\V_0,\delta^\V_1\right],\\[10pt]
\dfrac{2}{3}\dfrac{13\delta^2+4\delta+1-\sqrt3\sqrt{55\delta^4+40\delta^3+6\delta^2+8\delta-1}}{(1-\delta)^2}, & \text{ if } \delta\in\left[\delta_1^\V,\delta_2^\V\right],\\[5pt]
\dfrac{8(1-\delta)^2}{24\delta^2-12\delta+15}, & \text{ if } \delta\in\left[\delta_2^\V,1\right],
\end{cases}
\end{equation*}
and
\begin{enumerate}[\rm (i)]
\item $\delta_0^\V\cong0.163$ is the smallest real root of the polynomial $\delta^4-18\delta^3+2\delta^2-6\delta+1$,\smallskip
\item $\delta_1^\V\cong0.166$ is the only real root of the polynomial $31\delta^3+\delta^2+5\delta-1$,\smallskip
\item $\delta_2^\V\cong0.211$ is the largest real root of the polynomial $140\delta^4+40\delta^3-6\delta^2+88\delta-19$.
\end{enumerate}
\end{theorem}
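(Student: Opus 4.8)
The plan is to follow the same blueprint as the proof of \Cref{thm:estimates-general}, replacing the bound on the traceless Ricci part coming from \Cref{prop:boundC} with a sharper estimate — one that is sharp near $\delta=\tfrac14$ — distilled from Ville's analysis of the sectional curvature function on the oriented Grassmannian. First I would reduce to a pointwise inequality: by \eqref{eq:integrals}, \eqref{eq:chi}, \eqref{eq:sigma}, and the evenness $I_\lambda(-R)=I_\lambda(R)$ of the quadratic form $I_\lambda(R)=\underline{\chi}(R)-\tfrac1\lambda\underline{\sigma}(R)$ from \eqref{eq:ilambda}, it is enough to show that $I_\lambda(R)\geq 0$ for every positively $\delta$-pinched algebraic curvature operator $R\in\dpinched$ whenever $\delta\geq\delta_0^\V$ and $\lambda=\lambda^\V(\delta)$; as usual we may also assume $\sigma(M)\geq 0$ after reversing orientation.

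The heart of the argument is Step 2: extracting Ville's estimates. Writing a unit decomposable $2$-form as $\tfrac1{\sqrt2}(a+b)$ with $a\in\wedge^2_+\R^4$, $b\in\wedge^2_-\R^4$ of unit length, one has $\sec_R\big(\tfrac1{\sqrt2}(a+b)\big)=u+\tfrac12\langle W_+a,a\rangle+\tfrac12\langle W_-b,b\rangle+\langle Ca,b\rangle$ for $R$ in the canonical form \eqref{eq:curvop4}. Following Ville~\cite{ville-negative,ville-positive} (and its partial extensions~\cite{ko2,DRR}), I would exploit the inequalities $(\sec_R-\delta)(1-\sec_R)\geq 0$ — evaluated at the $2$-planes aligned with the eigenbases of $W_\pm$ and the singular vectors of $C$, and integrated against Haar measure on $S^2\times S^2\cong\Gr^+(\R^4)$ — and, after eliminating the auxiliary quantities (singular values of $C$ and Finsler--Thorpe parameters), distill the strongest bound the method affords: a quadratic inequality bounding $|C|^2$ in terms of $u$, $\vec w_+$, $\vec w_-$ and $\delta$, subject to $\pr(R)\in\epinched$. ``Pushing the method to its natural limit'' means choosing these evaluations so that the resulting bound is attained exactly at $\tfrac14 R_{\C P^2}$ and its companions in \Cref{rem:geominterp}; this sharpness is what will ultimately force $\lambda^\V(\tfrac14)=\tfrac13$.

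With the bound in hand, substituting it into \eqref{eq:chi} and \eqref{eq:sigma} gives $I_\lambda(R)\geq Q^\V_\lambda(x)$, where $Q^\V_\lambda$ is a quadratic polynomial whose coefficients are rational in $(\delta,\lambda)$ and $x$ records the variables $w_1^\pm,w_2^\pm,u$ determining $\pr(R)$, possibly augmented by one or two Finsler--Thorpe variables. By \Cref{prop:projd} and \Cref{prop:einsteinsimplex} (or \Cref{prop:einsteinonethorpe,prop:verticesboththorpe} in the augmented case), $x$ ranges over one of the Einstein simplices $\esimp^5$, $\esimp^6$, $\esimp^7$, so that $\min_{R\in\dpinched} I_\lambda(R)\geq\min_{x}Q^\V_\lambda(x)$. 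I would then compute this minimum exactly as in \Cref{thm:estimates-general}: the number of negative eigenvalues of $\operatorname{Hess}Q^\V_\lambda$ bounds, via \Cref{cor:smallsig}, the dimension of the faces of the simplex that must be inspected, and for each admissible face $\conv(S)$ one records the critical value $Q^\V_\lambda(x_S)$ together with the range of $(\delta,\lambda)$ for which $x_S\in\operatorname{relint}(\conv(S))$. The finitely many resulting polynomial conditions describe the semialgebraic set $\big\{(\delta,\lambda): \min_x Q^\V_\lambda(x)\geq 0\big\}$, and a cylindrical algebraic decomposition (as in \Cref{sec:lower}, cf.~\cite[Sec.~5.1]{basu}) rewrites it as $\big\{\delta\geq\delta_0^\V,\ \lambda\geq\lambda^\V(\delta)\big\}$ with $\lambda^\V$ the three-piece function in the statement; the interior breakpoints $\delta_1^\V$ and $\delta_2^\V$ are where the minimizing face changes (on $[\delta_2^\V,1]$ it is the same face that produces the last branch of \eqref{eq:lambda*}), while $\delta_0^\V$ is the smallest $\delta$ for which the set is nonempty — below it Ville's constraints no longer force $I_\lambda\geq 0$ for any $\lambda$, which is precisely the built-in limitation that makes \Cref{thm:estimates-general} indispensable for small $\delta$.

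The main obstacle is Step 2: identifying exactly which evaluations and integrations of $(\sec_R-\delta)(1-\sec_R)\geq 0$ yield the optimal $|C|^2$-estimate — strong enough that the optimization in the last step returns the sharp value $\lambda^\V(\tfrac14)=\tfrac13$, yet clean enough to keep the ensuing semialgebraic computation tractable. A secondary difficulty is the bookkeeping in that optimization: correctly enumerating the admissible faces of the Einstein simplex, the $(\delta,\lambda)$-ranges on which each critical point lies in the relative interior of its face, and verifying that the cylindrical algebraic decomposition collapses to the stated three branches with the asserted breakpoints.
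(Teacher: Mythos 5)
Your overall blueprint (reduce to the pointwise inequality $I_\lambda(R)\geq 0$ on $\dpinched$, obtain a polynomial lower bound, optimize over a polyhedron via \Cref{cor:smallsig}, and finish by cylindrical algebraic decomposition) matches the paper's outline, and your remark that $\delta_0^\V$ is the threshold of feasibility is also correct. However, the step you yourself flag as the main obstacle is where your plan diverges from what actually works, and as written it contains a genuine gap.

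The key issue is your Step 2. You propose integrating $(\sec_R-\delta)(1-\sec_R)\geq 0$ against Haar measure on $\Gr^+(\R^4)\cong S^2\times S^2$ and then "distilling" an upper bound for $|C|^2$ in terms of the Einstein data, so that the resulting quadratic lives on $\esimp^5$, $\esimp^6$, or $\esimp^7$. This is \emph{not} Ville's method, and it is not how the paper proceeds. The proof in \Cref{app:ville} never squares the pinching inequality nor averages over the Grassmannian. Instead it picks a basis $H_i$ diagonalizing $W_+$, takes unit vectors $K_i$ in the direction of $CH_i$ (essentially a polar decomposition of $C$ against that basis), and applies the \emph{linear} bounds $\delta\leq\sec_R\leq 1$ only at the finitely many planes $\tfrac{1}{\sqrt2}(H_i+K_0)$ and $\tfrac{1}{\sqrt2}(H_i\pm K_i)$; this gives $|c_i|\leq m(v_i+\tfrac12\widehat w_i^-)$ with $m(x)=\min\{1-x,x-\delta\}$ a \emph{piecewise-linear} function, where $v_i=u+\tfrac12 w_i^+$ and $\widehat w_i^-=\langle W_-K_i,K_i\rangle$. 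Two consequences follow that your plan misses entirely: (1) the auxiliary quantity $\alpha=\max_i|\widehat w_i^-|$ cannot be "eliminated" to land on the Einstein simplex, it is controlled by the \emph{lower} bound $|W_-|^2\geq\tfrac32\alpha^2$, which supplies the extra $\frac{1}{2\lambda}\alpha^2$ term needed to complete a square against the mixed terms in the $|C|^2$ estimate; and (2) because $m$ is piecewise-linear, the resulting function $F_\lambda(v_1,v_2,v_3)$ is only \emph{piecewise}-quadratic, and the optimization takes place not on $\esimp^5$ or $\esimp^6$ but on the polyhedron $V_\delta=\{\delta\leq v_1\leq v_2\leq v_3\leq 1\}\subset\R^3$, which must be split into the four subpolyhedra $V_\delta^1,\dots,V_\delta^4$ along the hyperplanes $v_i=\tfrac{\delta+1}{2}$ where $m$ switches branches. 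Without this specific structure one cannot recover the three branches of $\lambda^\V$ or the breakpoints $\delta_0^\V,\delta_1^\V,\delta_2^\V$; in particular the Einstein-simplex route you sketch is precisely the route taken in \Cref{thm:estimates-general}, and it does \emph{not} deliver $\lambda\bigl(\tfrac14\bigr)=\tfrac13$.

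In short, the skeleton of your plan is sound, but the candidate $|C|^2$-estimate you propose to derive (via Grassmannian averaging of a product pinching inequality) is a different, and here unworkable, device; the actual mechanism is the adapted-basis argument with the piecewise-linear clamp $m$, the auxiliary $\alpha$, and the lower Weyl bound $|W_-|^2\geq\tfrac32\alpha^2$, leading to a piecewise-quadratic optimization over $V_\delta\subset\R^3$ rather than a quadratic one over an Einstein simplex.
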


\begin{remark}
The original instances of Theorem~\ref{thm:ville} that appear in the works of Ville~\cite{ville-negative,ville-positive} are that negatively $\tfrac14$-pinched oriented $4$-manifolds with finite volume satisfy $|\sigma(M)|\leq \frac{1}{3}\,\chi(M)$, and positively $\tfrac{4}{19}$-pinched oriented $4$-manifolds satisfy $|\sigma(M)| < \frac{1}{2}\,\chi(M)$. These statements derive, respectively, from $\lambda^\V\!\left(\frac{1}{4}\right)=\frac{1}{3}$ and $\lambda^\V\!\left(\frac{4}{19}\right)=\frac{2(97-7\sqrt{141})}{75}<\frac{1}{2}$.
\end{remark}

\begin{proof}[Proof of Theorem~\ref{thm:ville}]
Given a $\delta$-pinched oriented $4$-manifold $(M^4,\g)$ with finite volume, up to reversing its orientation, we shall assume $\sigma(M)\geq0$. Moreover, at each $p\in M$, its curvature operator $R_p\in\Sym^2_b(\wedge^2\R^4)$ satisfies $\pm R\in\dpinched$, see \eqref{eq:dpinched}.

For each $\lambda>0$, consider the quadratic form $\underline{\Delta}_\lambda\colon\Sym^2_b(\wedge^2\R^4)\to \R$ defined as
\begin{equation*}
\underline{\Delta}_\lambda(R):=8\left(\underline{\chi}(R)-\tfrac{1}{\lambda}\,\underline{\sigma}(R)\right),
\end{equation*}
where $\underline{\chi}$ and $\underline{\sigma}$ are given by \eqref{eq:chi} and \eqref{eq:sigma}. That is, $\underline{\Delta}_\lambda(R)=8I_\lambda(R)$, where $I_\lambda$ is defined in~\eqref{eq:ilambda}, so writing $R$ in the canonical form \eqref{eq:curvop4}, we have:
\begin{equation}\label{eq:deltalambda}
\underline{\Delta}_\lambda(R)=6u^2+\left(1-\tfrac{2}{3\lambda}\right)|W_+|^2+\left(1+\tfrac{2}{3\lambda}\right)|W_-|^2-2|C|^2.
\end{equation}
Clearly, $\underline{\Delta}_\lambda(-R)=\underline{\Delta}_\lambda(R)$. Therefore, it follows from \eqref{eq:integrals} that if 
\begin{equation}\label{eq:goalville}
\min_{R\in\dpinched}\underline{\Delta}_\lambda(R)\geq0,
\end{equation}
then $\sigma(M)\leq \lambda \cdot \chi(M)$. Thus, it suffices to prove \eqref{eq:goalville} holds if $\lambda=\lambda^\V(\delta)$, cf.~\eqref{eq:goal-lambda}.

Fix $R\in\dpinched$. 
Using the same notation as \cite{ville-negative,ville-positive}, let $H_i\in\wedge^2_+\R^4$, $i=1,2,3$, be an orthonormal basis that diagonalizes $W_+$, and set
\begin{equation}\label{eq:wipVille}
w^+_i:=\langle W_+ H_i,H_i\rangle, \quad i=1,2,3,
\end{equation}
where $w_1^+\leq w_2^+\leq w_3^+$.
Consider the traceless Ricci component of $R$ as a linear map $C\colon \wedge^2_+\R^4\to\wedge^2_-\R^4$, which is denoted $Z_1$ in \cite{ville-negative,ville-positive}. Let $K_i\in\wedge^2_-\R^4$ be unit vectors such that\footnote{In other words, $K_i=\pm\frac{CH_i}{\|CH_i\|}$ if $CH_i\neq 0$, but $K_i$ can be chosen arbitrarily if $CH_i=0$.} $CH_i=c_i K_i$, where $c_i\in\R$, $i=1,2,3$, and set
\begin{equation}\label{eq:wimVille}
\begin{aligned}
\widehat  w_i^-&:=\langle W_- K_i,K_i\rangle, \quad i=1,2,3,\\
\alpha&:=\max_{1\leq i\leq 3} |\widehat w_i^-|.
\end{aligned}
\end{equation}
We stress that while $w_i^+$ are the eigenvalues of $W_+$ as in \eqref{eq:widef}, the numbers $\widehat  w_i^-$ defined in \eqref{eq:wimVille} in general \emph{do not agree} with the eigenvalues $w_i^-$ of $W_-$. Still, arguing as in \cite[Lemma 4]{ville-negative} and \cite[Lemme 1.4]{ville-positive}, it follows from \eqref{eq:wimVille} that
\begin{equation}\label{eq:villeW-}
|W_-|^2 \geq \tfrac32\alpha^2.
\end{equation}

Since the oriented Grassmannian \eqref{eq:grassmannian} can be written as
\begin{equation}\label{eq:grassmannian-realization}
\Gr^+(\R^4)=\left\{\dfrac{H+K}{\sqrt2}\in\wedge^2\R^4 : H\in\wedge^2_+\R^4,\, K\in\wedge^2_-\R^4,\, \|H\|=\|K\|=1 \right\},
\end{equation}
it follows from $R\in\dpinched$ and \eqref{eq:wipVille} that the quantities
\begin{equation}\label{eq:defvi}
v_i:=u+\frac{w_i^+}{2}=\frac{\langle R \sigma_i, \sigma_i\rangle + \langle R * \sigma_i, * \sigma_i\rangle}{2}, \quad i=1,2,3,
\end{equation}
where $\sigma_i:=\frac{1}{\sqrt2}(H_i+K_0)\in\Gr^+(\R^4)$ and $K_0\in\wedge^2_-\R^4$ is a unit vector chosen so that $\langle W_- K_0,K_0\rangle=0$, satisfy 
\begin{equation}\label{eq:vi}
\delta \leq v_i \leq 1, \quad \text{and} \quad \textstyle\sum\limits_{i=1}^3 v_i=3u.
\end{equation}
Similarly, using that $R\in\dpinched$ and $\frac{1}{\sqrt2}(H_i\pm K_i)\in\Gr^+(\R^4)$, we have
\begin{equation}\label{eq:cibound}
\delta\leq v_i+\frac{\widehat w_i^-}{2} \leq1, \quad \text{and}\quad |c_i|\leq m\!\left(v_i+\frac{\widehat w_i^-}{2}\right), \quad i=1,2,3,
\end{equation}
where $m$ is the piecewise affine function 
\begin{equation}\label{eq:defm}
\begin{aligned}
m &\colon [\delta,1] \longrightarrow\left[0,\tfrac{1-\delta}{2}\right]\\
m(x)&:=\min\left\{1-x,x-\delta \right\}.
\end{aligned}
\end{equation}
Thus, from \eqref{eq:wimVille} and \eqref{eq:cibound},
\begin{equation}\label{eq:boundC}
\begin{aligned}
|C|^2 &= \textstyle \sum\limits_{i=1}^3 c_i^2 \leq  \sum\limits_{i=1}^3 m\!\left(v_i+\frac{\widehat w_i^-}{2}\right)^2 \leq  \sum\limits_{i=1}^3 \left( m(v_i)+\frac{|\widehat w_i^-|}{2}\right)^2\\
&\textstyle\;\leq  \sum\limits_{i=1}^3 \left(m(v_i)^2+\alpha\, m(v_i) +\frac{1}{4}\alpha^2\right)
= \sum\limits_{i=1}^3 m(v_i)^2 +\alpha \sum\limits_{i=1}^3 m(v_i) + \frac{3}{4}\alpha^2.
\end{aligned}
\end{equation}

Combining \eqref{eq:villeW-}, \eqref{eq:defvi}, \eqref{eq:vi}, and \eqref{eq:boundC}, we arrive at Ville's main estimate:
\begin{align*}
\tfrac12\underline{\Delta}_\lambda(R)&=3u^2+\left(\tfrac12-\tfrac{1}{3\lambda}\right)|W_+|^2+\left(\tfrac12+\tfrac{1}{3\lambda}\right)|W_-|^2-|C|^2\\
&=\textstyle\left(\frac{4}{9\lambda}-\frac{1}{3}\right)\left(\sum\limits_{i=1}^3 v_i\right)^2  +\left(2-\frac{4}{3\lambda}\right)\sum\limits_{i=1}^3 v_i^2  +\left(\tfrac12+\tfrac{1}{3\lambda}\right)|W_-|^2-|C|^2\\
&\geq \textstyle\left(\frac{4}{9\lambda}-\frac{1}{3}\right)\left(\sum\limits_{i=1}^3 v_i\right)^2  +\left(2-\frac{4}{3\lambda}\right)\sum\limits_{i=1}^3 v_i^2 
+ \tfrac{1}{2\lambda}\alpha^2 -\alpha \sum\limits_{i=1}^3 m(v_i) -\sum\limits_{i=1}^3 m(v_i)^2  \\
&= \textstyle\left(\frac{4}{9\lambda}-\frac{1}{3}\right)\left(\sum\limits_{i=1}^3 v_i\right)^2  +\left(2-\frac{4}{3\lambda}\right)\sum\limits_{i=1}^3 v_i^2  -\frac{\lambda}{2}\left(\sum\limits_{i=1}^3 m(v_i)\right)^2 
 -\sum\limits_{i=1}^3 m(v_i)^2 \\
&\quad \textstyle + \left(\frac{1}{\sqrt{2\lambda}}\,\alpha -\sqrt\frac{\lambda}{2}\sum\limits_{i=1}^3 m(v_i) \right)^2 \\
&\geq F_\lambda(v_1,v_2,v_3),
\end{align*}
where $F_\lambda\colon V_\delta \to\R$ is the piecewise quadratic function
\begin{equation*}
F_\lambda(v_1,v_2,v_3):=\textstyle\left(\frac{4}{9\lambda}-\frac{1}{3}\right)\left(\sum\limits_{i=1}^3 v_i\right)^2  +\left(2-\frac{4}{3\lambda}\right)\sum\limits_{i=1}^3 v_i^2  -\frac{\lambda}{2}\left(\sum\limits_{i=1}^3 m(v_i)\right)^2 
 -\sum\limits_{i=1}^3 m(v_i)^2
\end{equation*}
on the polyhedron $V_\delta:=\big\{(v_1,v_2,v_3)\in\R^3:\delta\leq v_1\leq v_2\leq v_3\leq 1\big\}$. More precisely, from \eqref{eq:defm}, the restriction $F_\lambda^i:=\big(F_\lambda\big)|_{V_\delta^i}$ of the above to each of the subpolyhedra
\begin{equation}\label{eq:defVdi}
\begin{aligned}
V_\delta^1 &:=\big\{(v_1,v_2,v_3)\in V_\delta : \tfrac{\delta+1}{2}\leq v_1\leq v_2\leq v_3\leq 1\big\},\\
V_\delta^2 &:=\big\{(v_1,v_2,v_3)\in V_\delta : \delta \leq v_1\leq\tfrac{\delta+1}{2}\leq v_2\leq v_3\leq 1 \big\},\\
V_\delta^3 &:=\big\{(v_1,v_2,v_3)\in V_\delta : \delta \leq v_1\leq v_2\leq\tfrac{\delta+1}{2}\leq v_3\leq 1 \big\},\\
V_\delta^4 &:=\big\{(v_1,v_2,v_3)\in V_\delta : \delta\leq v_1\leq v_2\leq v_3\leq \tfrac{\delta+1}{2}\big\},
\end{aligned}
\end{equation}
is a quadratic form $F_\lambda^i\colon V_\delta^i\to\R$; and, clearly, $V_\delta=\bigcup_{i=1}^4 V_\delta^i$. Therefore, in order to show that $\lambda = \lambda^\V(\delta)$ implies \eqref{eq:goalville}, it suffices to show that it implies
\begin{equation}\label{eq:goalville1}
\min_{V_\delta^i} F_\lambda^i \geq0, \quad 1\leq i\leq 4;
\end{equation}
which we shall now prove using Corollary~\ref{cor:smallsig} with $Q=-F_\lambda^i$ and $K=V_\delta^i$.

According to \eqref{eq:defVdi}, we have that the vertices $q_j^i$ of the polyhedron $V_\delta^i$, where $1\leq j\leq 4$ if $i\in \{1,4\}$, and $1\leq j\leq 6$ if $i\in\{2,3\}$, are given as follows:

\medskip

$q_1^1=\left(\frac{\delta+1}{2},\frac{\delta+1}{2},\frac{\delta+1}{2}\right)$, \
$q_2^1=\left(\frac{\delta+1}{2},\frac{\delta+1}{2},1\right)$, \
$q_3^1=\left(\frac{\delta+1}{2},1,1\right)$, \
$q_4^1=\left(1,1,1\right)$,

\smallskip

$q_1^2=\left(\delta,\frac{\delta+1}{2},\frac{\delta+1}{2}\right)$, \
$q_2^2=\left(\delta,\frac{\delta+1}{2},1\right)$, \
$q_3^2=\left(\delta,1,1\right)$, \
$q_4^2=q_1^1$, \
$q_5^2=q_2^1$, \
$q_6^2=q_3^1$,

\smallskip

$q_1^3=\left(\delta,\delta,\frac{\delta+1}{2}\right)$, \
$q_2^3=\left(\delta,\delta,1\right)$, \
$q_3^3=q_1^2$, \
$q_4^3=q_2^2$, \
$q_5^3=q_1^1$, \
$q_6^3=q_2^1$,

\smallskip

$q_1^4=\left(\delta,\delta,\delta\right)$, \
$q_2^4=q_1^3$, \
$q_3^4=q_1^2$, \
$q_4^4=q_1^1$.

\medskip

\noindent
Note that $V_\delta^1$ and $V_\delta^4$ are $3$-simplices (i.e., tetrahedra), but $V_\delta^2$ and $V_\delta^3$ are convex hulls of $6$~distinct points in $\R^3$. However, it is easy to see that $V_\delta^2$ and $V_\delta^3$ are \emph{prisms}, i.e., isometric to a product of a $2$-simplex (the convex hull of $3$ of its vertices) and a $1$-simplex (the convex hull of $2$ of its vertices). Thus, it is straightforward to check which subsets of $q_j^2$ and $q_j^3$ determine faces of $V_\delta^2$ and $V_\delta^3$, respectively.

\begin{table}[!ht]  
\begin{tabular}{|c|l|}
\hline
 $q_j^i$ & $F^i_\lambda(q_j^i)$ $\phantom{\Big|}$ \\[2pt]
\hline
$q_1^1$ & $-\frac{9  \lambda}{8} \delta ^2 +\left(\frac{9 \lambda }{4}+3\right)\delta -\frac{9 \lambda }{8}$ \rule[4pt]{0pt}{8pt} \\[3pt]
$q_2^1$ & $\left(\frac{1}{6}- \frac{\lambda}{2}-\frac{2}{9\lambda} \right) \delta^2 + \left(\frac{5}{3}+\lambda +\frac{4}{9 \lambda }\right)\delta+\frac{7}{6} -\frac{\lambda }{2}-\frac{2}{9 \lambda }$ \rule[4pt]{0pt}{8pt} \\[3pt]
$q_3^1$ & $\left(\frac{1}{6} -\frac{\lambda }{8}-\frac{2}{9 \lambda }\right)\delta^2  +\left(\frac{2}{3}+\frac{\lambda}{4}+\frac{4}{9\lambda}\right) \delta+\frac{13}{6}-\frac{\lambda }{8}-\frac{2}{9 \lambda}$ \rule[4pt]{0pt}{8pt} \\[3pt]
$q_4^1$ & $3$ \rule[4pt]{0pt}{8pt} \\[3pt]
\hline
$q_1^2$ & $ \left(\frac{7}{6}-\frac{\lambda }{2}-\frac{2}{9 \lambda }\right)\delta^2 + \left(\frac{5}{3}+\lambda +\frac{4}{9 \lambda }\right)\delta + \frac{1}{6}-\frac{\lambda}{2}-\frac{2}{9\lambda} $ \rule[4pt]{0pt}{8pt} \\[3pt]
$q_2^2$ & $\left(\frac{3}{2}-\frac{\lambda }{8}-\frac{2}{3 \lambda }\right)\delta^2 +\left(\frac{\lambda }{4}+\frac{4}{3 \lambda }\right)\delta +\frac{3}{2}-\frac{\lambda }{8}-\frac{2}{3 \lambda } $  \rule[4pt]{0pt}{8pt} \\[3pt]
$q_3^2$ & $\left(\frac{5}{3}-\frac{8}{9 \lambda }\right)\delta ^2 +\frac{4}{9} \left(\frac{4}{\lambda }-3\right) \delta +\frac{8}{3}-\frac{8}{9 \lambda } $ \rule[4pt]{0pt}{8pt} \\[3pt]
\hline
$q_1^3$ & $\left(\frac{13}{6}-\frac{\lambda }{8}-\frac{2}{9 \lambda }\right)\delta^2 +\left(\frac{2}{3}+\frac{\lambda}{4}+\frac{4}{9\lambda}\right)\delta+\frac{1}{6}-\frac{\lambda }{8}-\frac{2}{9 \lambda }$ \rule[4pt]{0pt}{8pt} \\[3pt]
$q_2^3$ & $ \left(\frac{8}{3}-\frac{8}{9 \lambda }\right)\delta^2+\frac{4}{9} \left(\frac{4}{\lambda }-3\right)\delta+\frac{5}{3} -\frac{8}{9 \lambda }  $  \rule[4pt]{0pt}{8pt} \\[3pt]
\hline
$q_1^4$ & $ 3\delta^2$ \rule[4pt]{0pt}{8pt} \\[3pt]
$q_2^4$ & $ \left(\frac{13}{6}-\frac{\lambda }{8}-\frac{2}{9 \lambda }\right)\delta^2+\left(\frac{2}{3}+\frac{\lambda}{4}+\frac{4}{9\lambda}\right)\delta+\frac{1}{6} -\frac{\lambda }{8}-\frac{2}{9 \lambda } $  \rule[4pt]{0pt}{8pt} \\[3pt]
\hline
\end{tabular}
\caption{Values of $F_\lambda^i$ on the vertices $q_j^i$ of $V_\delta^i$, $1\leq i\leq 4$. The suppressed entries are equal to some other entry in the table, namely
$F^4(q_2^4)=F^3(q_1^3)$, $F^3(q_4^3)=F^2(q_2^2)$, $F^2(q_6^2)=F^1(q_3^1)$,
$F^4(q_4^4)=F^3(q_5^3)=F^2(q_4^2)=F^1(q_1^1)$,
$F^3(q_6^3)=F^2(q_5^2)=F^1(q_2^1)$, and 
$F^4(q_3^4)=F^3(q_3^3)=F^2(q_1^2)$.}\label{tab:Fdi0dim}
\end{table}

Routine computations show that the Hessian of each $F_\lambda^i$ is negative-definite if $\lambda<\frac{4}{3}$, and has exactly $d=2$ positive eigenvalues if $\lambda>\frac{4}{3}$; so it suffices to inspect faces of $V_\delta^i=\conv(q_j^i)$ that have dimension $\leq 2$, i.e., convex hulls of no more than $3$ different $q_j^i$'s.
By direct inspection, we find that, for all $0<\delta< 1$, $\lambda>0$, and $1\leq i\leq 4$, the Hessian of the restrictions of $F_\lambda^i$ to all $1$- and $2$-dimensional faces of $V_\delta^i$ is \emph{not} positive-definite. Thus, by Corollary~\ref{cor:smallsig}, we have that
$\min_{V_\delta^i} F_\lambda^i=\min F_\lambda^i(q_j^i)$ is the smallest among the values assumed by $F_\lambda^i$ on the vertices $q_j^i$ of $V_\delta^i$, which are given in Table~\ref{tab:Fdi0dim}.

Similarly to the last step in the proof of Theorem~\ref{thm:estimates-general}, applying Algebraic Cylindrical Decomposition to the semialgebraic set of $\R^2$ given by
\begin{equation*}
\mathfrak X^\V:=\left\{(\delta,\lambda)\in H :  F^i_\lambda(q_j^i) \geq 0, \text{ for all } i, j \right\},
\end{equation*}
where $H=(0,1)\times(0,+\infty)$, we obtain that $\mathfrak X^\V=\left\{\delta\in\left[\delta_0^\V,1\right) : \lambda^\V(\delta)\leq \lambda\leq \overline{\lambda}(\delta)\right\}$, where $\lambda^\V\colon \left[\delta_0^\V,1\right]\to\R$ is the piecewise continuous function in the statement, and $\overline{\lambda}(\delta):=\frac{8\delta}{3(1-\delta)^2}$. Note that
$\lambda^\V(\delta_0^\V)=\overline{\lambda}(\delta_0^\V)$, and at least one among $F_\lambda^1(q_1^1)$ and $F_\lambda^2(q_1^2)$ is negative if $\delta<\delta_0^\V$, so \eqref{eq:goalville1} does not hold. However, if $\delta\in\left[\delta_0^\V,1\right)$, then $(\delta,\lambda^\V(\delta))\in\mathfrak X^\V$ implies
that \eqref{eq:goalville1} and hence \eqref{eq:goalville} hold. The desired conclusion also holds at the right endpoint $\delta=1$, where $\lambda^\V(1)=0$, since constant curvature manifolds are locally conformally flat, and thus have zero~signature. 
\end{proof}

\begin{remark}
There are two noteworthy differences between the semialgebraic sets $\mathfrak X$ and $\mathfrak X^\V$ 
of $(\delta,\lambda)$ for which the crucial lower bounds in the proofs of Theorems~\ref{thm:estimates-general} and \ref{thm:ville}, respectively, yield nonnegative quantities (as desired). First, the projection of $\mathfrak X$ onto $0<\delta<1$ is surjective, which does not hold for $\mathfrak X^\V$. Second, for any given $\delta$ in this projection, the interval of $\lambda>0$ for which $(\delta,\lambda)\in\mathfrak X$ is not bounded from above, while it is for $\mathfrak X^\V$. Clearly, both stem from the presence of the upper bound $\overline{\lambda}(\delta)$ in the cylindrical decomposition of $\mathfrak X^\V$, which on $\mathfrak X$ it is simply $+\infty$.
\end{remark}

\begin{remark}
It was uncovered in our communications with Ville that there is a small mistake in \cite[p.~333-334]{ville-negative} and \cite[p.~152]{ville-positive}, where the coefficient $-\frac{1}{7}$, respectively $-\frac{2}{5}$, of the term $\big(\sum_{i=1}^3 m(v_i)\big)^2$, should be replaced by
$-\frac{1}{6}$, respectively $-\frac{1}{4}$; i.e., this coefficient should be equal to $-\frac{\lambda}{2}$, as in the definition of~$F_\lambda$. 
This arises from erroneously assuming that the sum of the numbers $\widehat w_i^-$ in \eqref{eq:wimVille} vanishes, which would imply that the sum of their squares is bounded above by $2\alpha^2$. This need not be the case, and the sum of $(\widehat w_i^-)^2$ is only bounded above by $3\alpha^2$. Indeed, denoting by $K\in\textnormal{Mat}_{3\times 3}(\R)$ the matrix whose columns are the coordinates of $K_i$ with respect to a fixed orthonormal basis of $\wedge^2_-\R^4$, we have that 
\begin{equation*}
\textstyle 0=\sum\limits_{i=1}^3 w_i^-=\tr W_-=\tr\big(K W_- K^\mathrm t\big)=\tr \big(W_- (K^\mathrm tK)\big),
\end{equation*}
since $K^\mathrm t K=\id$; however
\begin{equation*}
\textstyle  \sum\limits_{i=1}^3 \widehat w_i^-=\tr \big(K^\mathrm t W_- K\big)=\tr  \big(W_- (KK^\mathrm t)\big)
\end{equation*}
may not vanish, as $KK^\mathrm t$ may not be equal to $\id$.
Fortunately, the rest of the proofs in \cite{ville-negative,ville-positive} can be modified accordingly, e.g., following the above proof of Theorem~\ref{thm:ville}, without any impact on the main result.
\end{remark}

\providecommand{\bysame}{\leavevmode\hbox to3em{\hrulefill}\thinspace}
\providecommand{\MR}{\relax\ifhmode\unskip\space\fi MR }
\providecommand{\MRhref}[2]{%
  \href{http://www.ams.org/mathscinet-getitem?mr=#1}{#2}
}
\providecommand{\href}[2]{#2}

\end{document}